\documentclass{amsart}

\usepackage{amsmath}
\usepackage{amsthm,amssymb,color,comment}

\usepackage{bbm}
\usepackage{hyperref}
\usepackage[TS1,T1]{fontenc}
\usepackage{dsfont}
\usepackage{tikz}
\usepackage{enumitem}
\usepackage[sort]{cite}

\usepackage{graphics}
\usepackage{mathrsfs}
\usepackage{lmodern}%
\usepackage[english]{babel}%
\usepackage{amsfonts}%
\usepackage{color}
\usepackage{floatrow}
\usepackage[ruled,vlined]{algorithm2e}
\usepackage{subcaption}
\usepackage{cleveref}
\usepackage{geometry}
\usepackage{enumitem}

\numberwithin{equation}{section}

\newtheorem{thm}{Theorem}[section]

\gdef\thref#1{Th.~}

\gdef\tabref#1{Table~\ref{#1}}

\newcommand{\diff}{\mathop{}\!\mathrm{d}}

\DeclareMathOperator{\minT}{min}

\newcommand{\R}{\mathbb{R}}
\newcommand{\N}{\mathbb{N}}

\newcommand{\supp}{\text{supp}}

\makeatletter
\newcommand{\doublewidetilde}[1]{{%
  \mathpalette\double@widetilde{#1}%
}}
\newcommand{\double@widetilde}[2]{%
  \sbox\z@{$\m@th#1\widetilde{#2}$}%
  \ht\z@=.9\ht\z@
  \widetilde{\box\z@}%
}
\makeatother
\textwidth 6 in
\evensidemargin 0.2 in 
\oddsidemargin 0.2 in 

\definecolor{kuba}{rgb}{0.858, 0.188, 0.478}
\definecolor{minor}{rgb}{0.0, 0.0, 1.0}

\newtheorem{theorem}{Theorem}[section]

\newtheorem{lemma}[theorem]{Lemma}

\newtheorem{definition}[theorem]{Definition}

\theoremstyle{definition}
\newtheorem{assumption}[theorem]{Assumption}
\newtheorem{remark}[theorem]{Remark}
\newtheorem{notation}[theorem]{Notation}

\gdef\thref#1{Th.~}

\gdef\tabref#1{Table~\ref{#1}}

\parskip 2pt

\author{Piotr Gwiazda}
\address{{\it Piotr Gwiazda:} Institute of Mathematics, Polish Academy of Sciences, ul. \'Sniadeckich 8, 00-656 Warsaw, Poland}
\email{pgwiazda@mimuw.edu.pl}
\author{B\l{}a\.zej Miasojedow}
\address{{\it B\l{}a\.zej Miasojedow:} Faculty of Mathematics, Informatics and Mechanics, University of Warsaw, ul. Banacha 2, 02-097 Warsaw, Poland}
\email{bmiasojedow@mimuw.edu.pl}
\author{Jakub Skrzeczkowski}
\address{{\it Jakub Skrzeczkowski:} Faculty of Mathematics, Informatics and Mechanics, University of Warsaw, ul. Banacha 2, 02-097 Warsaw, Poland}
\email{jakub.skrzeczkowski@student.uw.edu.pl}
\author{Zuzanna~Szyma\'nska}
\address{{\it Zuzanna Szyma\'nska:} ICM, University of Warsaw, ul. Tyniecka 15/17, 02-630 Warsaw, Poland, and Institute of Mathematics, Polish Academy of Sciences, ul. \'Sniadeckich 8, 00-656 Warsaw, Poland}
\email{mysz@icm.edu.pl}

\thanks{
B.~Miasojedow, J.~Skrzeczkowski and Z.~Szyma\'nska acknowledge the support from the National Science Centre Poland Grant 2017/26/M/ST1/00783. The calculations were made with the support of the Interdisciplinary Center for Mathematical and Computational Modelling (ICM) of the University of Warsaw under the computational grant no. G79-28.}

\begin{document}

\title[EBT for model with discontinuous interaction kernel]{Convergence of the EBT method for a non-local model of cell proliferation with discontinuous interaction kernel}

\begin{abstract}
We consider the EBT algorithm (a particle method) for the non-local equation with a discontinuous interaction kernel. The main difficulty lies in the low regularity of the kernel which is not Lipschitz continuous, thus preventing the application of standard arguments. Therefore, we use the radial symmetry of the problem instead and transform it using spherical coordinates. The resulting equation has a Lipschitz kernel with only one singularity at zero. We introduce a new weighted flat norm and prove that the particle method converges in this norm. We also comment on the two-dimensional case which requires the application of the theory of measure spaces on general metric spaces and present numerical simulations confirming the theoretical results. In a companion paper, we apply the Bayesian method to fit parameters to this model and study its theoretical properties.
\end{abstract}

\keywords{particle method, EBT algorithm, measure solutions, flat metric, non-local equation, convergence analysis, cancer modelling}

\maketitle
\tableofcontents


\section{Introduction}\label{Sec:Introduction:Math}


\noindent In this paper we study a numerical algorithm to solve the non-local equation 
\begin{equation}\label{non-local_proliferation}
  \partial _t n(x,t) \ =\  k* n(x,t) \,\Bigl( 1 \ -\  n(x,t) \Bigr),
\end{equation}
where $k = \mathds{1}_{B_{\sigma}(0)}/|B_{\sigma}(0)|$ is so-called interaction kernel, $B_{\sigma}(0)$ denotes a ball centred at 0 with radius $\sigma$ and volume $|B_{\sigma}(0)|$ while
$$
k \ast n(x,t) \ =\ \int_{\R^d} k(x-y) \, n(y,t)\, \diff y.
$$
Recently, we proposed the above model to describe cells' proliferation within a solid tumour \cite{szymanska2021} where we also used Bayesian methodology to estimate model parameters. Although our modelling approach alters from those proposed earlier, there are examples of other interesting models describing the dynamics of multicellular spheroids like the ones proposed by Byrne and Chaplain \cite{byrne1995,byrne1996,byrne1998} studied in a bunch of analytical papers \cite{cui2003,chen2005,friedman2006a,friedman2006b}.\\

\noindent In the case $k$ is compactly supported and Lipschitz continuous, one can solve \eqref{non-local_proliferation} using classical particle method \cite{MR3986559,deRoos1988,MR3267354,MR3591128}, originally studied in the context of fluid dynamics and kinetic theory \cite{MR2647756, MR802214, MR1040146, MR987390, MR731212, MR3529992, MR2888301, MR3632260} and brought to mathematical biology by de Roos \cite{de1997gentle,de2001physiologically,deRoos1988}. This can be possibly combined with the splitting technique \cite{MR3870087,carrillo2014,MR2050900}. In these algorithms, one divides the population into smaller groups called cohorts. This allows transforming PDE \eqref{non-local_proliferation} to a system of ODEs for the masses of each cohort (localisations of cohorts are constant as there is no transport term in \eqref{non-local_proliferation}).\\

\noindent To prove numerical convergence of these methods, one embeds the problem into the space of non-negative Radon measures \cite{carrillo2012,MR2997595,MR2644146,MR2746205,MR3342408,MR3507552,MR3461738} where each cohort is represented by a Dirac mass. The convergence above is shown with respect to the flat norm (bounded Lipschitz distance) and its main properties are reviewed in Section \ref{sect:measure_theory}. One can also prove the convergence with respect to weak$^*$ topology on the space of measures \cite{MR3138105, MR2888301} but this is formally a weaker result and it does not yield convergence estimate as weak$^*$ topology is not explicitly metrizable. Moreover, a simple variant of flat norm proved to be useful for optimal control problems in spaces of measures  \cite{MR4045015, MR4066016, MR4027078} which may result in the future application of particle methods for such problems.  \\

\noindent We remark that particle methods have been used in the more general context of structured population models (with transport term) and we refer to \cite[Chapter 4]{our_book_ACPJ} for the systematic treatment of this topic. We also note that these methods are widely used in the community of theoretical biologists and ecologists, see for instance \cite{FalsterE2719,Falster2016,zhang2017performance}.\\

\noindent In this paper we deal with the case $k = \mathds{1}_{B_{\sigma}(0)}/|B_{\sigma}(0)|$ so that $k$ has jump at the boundary of $B_{\sigma}(0)$ and is not even continuous. However, one may use radial symmetry of the problem to gain some regularity: it turns out that after moving to the spherical coordinates, \eqref{non-local_proliferation} becomes of the form
\begin{equation}\label{eq:transform_coordinates_INTRODUCTION}
\partial _t p(R,t) \  =\  \left(4\pi R^2 \ -\  p(R,t) \right) \int_0^\infty L(R,r)\, p(r,t) \diff r
\end{equation}
where $p(R,t) = 4\pi R^2\, n((0,0,R),t)$ and $L(R,r)$ is given by \eqref{eq:defL}. It turns out that after appropriate modification of the flat norm, one can apply the particle method. More precisely, we write
\begin{equation}\label{eq:approximation_INTRODUCTION}
p(\cdot, t) \approx \sum_{i=1}^N m_i(t) \, \delta_{x_i}(\cdot), \qquad x_i = \frac{i}{N} \, R_0,
\end{equation}
where $\delta_{x_i}$ denotes Dirac measure at $x_i$ representing particle, $m_i$ is the mass concentrated at $x_i$ while $R_0$ is some parameter restricting the domain of interest (in general, the equation enjoys infinite-speed-of-propagation property and the solution is not compactly supported). Inserting \eqref{eq:approximation_INTRODUCTION} into \eqref{eq:transform_coordinates_INTRODUCTION} yields formally system of ODEs for masses
\begin{equation}\label{eq:ODE_num_scheme_INTRODUCTION}
\partial_t m_i(t) = \Big( 4\pi x_i^2 \, \frac{R_0}{N} -\  m_i(t) \Big) \sum_{j=1}^N L(x_i,x_j)\, m_j(t) 
\end{equation}
where $m_i(0)$ are chosen so that $p(\cdot, 0) \approx \sum_{i=1}^N m_i(0) \, \delta_{x_i}(\cdot)$. Equation \eqref{eq:ODE_num_scheme_INTRODUCTION} is solvable by some standard algorithms, for instance Euler or Runge-Kutta method. \\

\noindent The main result of this paper reads:
\begin{theorem}\label{thm:main_res_convergence}
Let $p(r,0) = 4\pi r^2 \, n_0(r)$ where $n_0: \R^+ \to \R^+$ is bounded and compactly supported. Consider approximation of $p(r,0)$ in the space of measures
$$
\mu_0^N(\cdot) = \sum_{i=1}^N m_i(0) \, \delta_{x_i}(\cdot), \qquad m_i(0) = \int_{x_{i-1}}^{x_{i}} p(r,0) \diff r.
$$
Let $p(r,t)$ be the solution to \eqref{non-local_proliferation} with initial condition $p(r,0)$ and $\mu_{t}^N = \sum_{i=1}^N m_i(t) \, \delta_{x_i}$ where $m_i(t)$ solve \eqref{eq:ODE_num_scheme_INTRODUCTION}. Then, there is a constant $C$ independent of $R_0 > 1$ and $N$ such that
\begin{equation}\label{eq:error_in_main_res}
\left\|p(\cdot,t) -  \mu_{t}^N \right\|_{BL^*,w} \leq C\,\frac{R_0^2}{N} +  C\, e^{-R_0}
\end{equation}
where the weighted norm $\|\cdot \|_{BL^*,w}$ is defined in \eqref{defeq:flatnorm_weighted} and $p(\cdot,t)$ is identified with the measure $p(\cdot,t)(A) = \int_{A} p(r,t) \diff r$.
\end{theorem}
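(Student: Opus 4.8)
The plan is to embed both the exact solution and the particle system into the space of non-negative Radon measures and to estimate their distance in the weighted flat norm $\|\cdot\|_{BL^*,w}$ of \eqref{defeq:flatnorm_weighted}, closing the argument with a Gronwall inequality. Since \eqref{eq:transform_coordinates_INTRODUCTION} carries no transport term, the particle locations $x_i$ are frozen and only the masses evolve, so the whole difficulty is concentrated in the reaction term. The error in \eqref{eq:error_in_main_res} then splits naturally into three contributions: the initialisation error $\|p(\cdot,0) - \mu_0^N\|_{BL^*,w}$, a truncation error from discarding the mass of $p$ beyond $R_0$, and a discretisation (consistency) error from replacing the two integrals in \eqref{eq:transform_coordinates_INTRODUCTION} by their quadrature counterparts in \eqref{eq:ODE_num_scheme_INTRODUCTION}. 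The point of passing to spherical coordinates is precisely that the resulting kernel $L$ from \eqref{eq:defL} is Lipschitz away from a single singularity at $r=0$, which is what makes a stability estimate in the flat norm possible at all.

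First I would assemble, or quote from the earlier sections, the a priori ingredients: global-in-time nonnegativity and a uniform bound on the total mass of $\mu_t^N$ and of $p(\cdot,t)$ on compact sets, together with an exponential tail bound showing that the weighted mass of $p(\cdot,t)$ on $(R_0,\infty)$ is controlled by $e^{-R_0}$. This last estimate is exactly what produces the second term in \eqref{eq:error_in_main_res}; it follows from the compact support of $n_0$ and the exponential decay of $p(\cdot,t)$ at infinity, measured against the weight $w$, which is chosen to decay at infinity. The initialisation error is of order $\Delta x = R_0/N$ by the choice $m_i(0) = \int_{x_{i-1}}^{x_i} p(r,0)\,\diff r$, since the two measures then share identical cell masses and the flat distance only reflects transporting mass by at most $\Delta x$ within each cell of the bounded, compactly supported initial density; for $R_0 > 1$ this is absorbed into $C\,R_0^2/N$.

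The core is the stability estimate. Testing the difference $p(\cdot,t) - \mu_t^N$ against an admissible function $\psi$ for $\|\cdot\|_{BL^*,w}$, I would differentiate $\int \psi \,\diff(p(\cdot,t) - \mu_t^N)$ in time and use the weak forms of \eqref{eq:transform_coordinates_INTRODUCTION} and \eqref{eq:ODE_num_scheme_INTRODUCTION}. Comparing cell by cell the exact masses $\widetilde{m}_i(t) = \int_{x_{i-1}}^{x_i} p(r,t)\,\diff r$ with the ODE masses $m_i(t)$, the right-hand side decomposes into (i) a genuinely Lipschitz-in-measure part, bounded by $C\,\|p(\cdot,t) - \mu_t^N\|_{BL^*,w}$ using the Lipschitz continuity of $L$ and the uniform mass bounds to linearise the quadratic reaction term, and (ii) two quadrature errors, namely the rectangle-rule error in replacing $\int_{x_{i-1}}^{x_i} 4\pi R^2\,\diff R$ by $4\pi x_i^2\,\Delta x$, whose total over $[0,R_0]$ scales like $\Delta x\cdot \sup_{[0,R_0]}|(4\pi R^2)'| \sim R_0^2/N$, and the error in replacing $\int_0^\infty L(R,r)\,p(r,t)\,\diff r$ by $\sum_{j=1}^N L(x_i,x_j)\,m_j(t)$, which is of the same order on $[0,R_0]$ plus an $e^{-R_0}$ tail.

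Assembling these pieces gives the differential inequality $\frac{\diff}{\diff t}\|p(\cdot,t) - \mu_t^N\|_{BL^*,w} \leq C\,\|p(\cdot,t) - \mu_t^N\|_{BL^*,w} + C\,R_0^2/N + C\,e^{-R_0}$, and Gronwall's lemma on a fixed time interval, together with the $O(R_0/N)$ initialisation error, yields \eqref{eq:error_in_main_res} with a constant independent of $R_0$ and $N$. I expect the main obstacle to be controlling the convolution quadrature near the singularity of $L$ at $r=0$: the rectangle-rule estimate there cannot rely on a uniform Lipschitz constant, so one must exploit the vanishing factor $4\pi r^2$ (equivalently, the smallness of $p(r,t)$ near the origin) and the precise form of $L$ from \eqref{eq:defL} to absorb the singular contribution into the $R_0^2/N$ term. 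A secondary difficulty is selecting the weight $w$ so that it simultaneously tames the $R^2$-growth of the source in the stability constant and produces the clean $e^{-R_0}$ truncation bound.
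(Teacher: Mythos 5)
Your overall architecture --- embed both objects in $(\mathcal{M}^+_w(\R^+),\|\cdot\|_{BL^*,w})$, split the error into initialisation, truncation and quadrature contributions, and close with a stability-plus-Gronwall argument --- matches the paper's, which organises the same three pieces through an intermediate solution $\widetilde{\mu_t}$ (the continuous equation started from the discretised datum $\mu_0^N$) and works with the mild/Duhamel formulation rather than differentiating the flat norm in time. Two of your steps, however, contain genuine gaps. The first concerns the $e^{-R_0}$ term: the weight in \eqref{defeq:flatnorm_weighted} is $1/r$, which decays only polynomially and by itself would give a tail of order $1/R_0$, not $e^{-R_0}$. The exponential tail is not a consequence of the compact support of $n_0$ either, since the equation has infinite speed of propagation; it is a separate moment-propagation lemma (Theorem \ref{lem:tail_control_solution}) which exploits the fact that $L(R,r)$ vanishes for $|R-r|>\sigma$ (Lemma \ref{lem:prop_of_L} \ref{propD}), so that $M(r)=e^{r}$ satisfies $M(r+\sigma)/M(r)\le e^{\sigma}$ and $\int_{\R^+} M(r)r^{-1}\diff\mu_t(r)$ obeys a Gronwall inequality. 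You assert the decay rather than derive it, and the analogous bound must also be proved for the discrete dynamics (Theorem \ref{thm:removing_local_interactions} \ref{discr_tail}).

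The second gap is in the stability step, which is precisely where the singularity at the origin bites. Since $|\partial_R L(R,r)|\le R^{-1}\bigl(2\sigma r^{-1}+L(R,r)\bigr)$ (property \ref{propC}), there is no uniform Lipschitz-in-measure bound of the form $C\|p(\cdot,t)-\mu_t^N\|_{BL^*,w}$; the paper's continuity estimate \eqref{eq:important_continuity_estimate} instead controls the difference of two solutions by $\bigl\|(\mu_0^{(1)}-\mu_0^{(2)})/(r\min(1,r))\bigr\|_{BL^*}$, i.e.\ the initial error must be measured with the \emph{stronger} weight $1/r^2$ near the origin. Your initialisation estimate ("transport by at most $\Delta x$ within each cell") is carried out in the plain weighted norm and does not feed into such a stability estimate; one needs Lemma \ref{lem:approx_distr} with $f(r)=r\min(1,r)$, which is finite only because $p_0(r)=4\pi r^2\,n_0(r)$ vanishes quadratically at $r=0$. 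You correctly identify the factor $4\pi r^2$ as the cure and correctly locate the quadrature error of size $\|\lambda-\lambda_N\|_{BL^*[0,R_0]}\le R_0^2/N$, but without quantifying how the singular derivative of $L$ is absorbed (via the identity $L(R,r)S(R)/R=4\pi\widetilde{L}(R,r)/r$ and the estimates \ref{propB}--\ref{propI}), the constant in your differential inequality is not shown to be finite, let alone independent of $R_0$ and $N$.
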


\noindent We remark that the term $e^{-R_0}$ in the error estimate in Theorem \ref{thm:main_res_convergence} comes from the fact that the solution is supported on the whole line $\R^+$, even if initial data is compactly supported. In other words, this term represents error coming from the truncation of the support of the solution.\\

\noindent The first novelty of this paper concerns application of radial symmetry of the problem to gain sufficient regularity of the kernel. After the change of variables in Section \ref{sect:radial_change}, using weighted norm introduced in \eqref{defeq:flatnorm_weighted}, we somehow incorporate singularity at $R,r =0$ into the definition of the norm. A crucial observation is the following inequality
$$
\left|\partial_{R} L(R,r)\right| \leq \frac{1}{R} \left(  \frac{2\sigma}{r} + {L(R,r)} \right),
$$
which measures in sufficiently optimal way singularity of Lipschitz constant of $L$ as $R, r \to 0$. The factors $\frac{1}{R}, \frac{1}{r}$ will be incorporated into the definition of the weighted flat norm cf. \eqref{defeq:flatnorm_weighted}. 
\\

\noindent Another novelty of this paper is related to the case $d=2$. It is known that particle method convergence is related to the Lipschitz regularity of $L$ but for $d=2$ we only know that (except $R,r = 0$) $L$ is only $1/2$-H\"older continuous cf. \eqref{eq:defL_2d}. However, one may observe that if $L$ is only $1/2$-H\"older continuous with respect to the usual Euclidean metric, it is Lipschitz continuous with respect to the H\"older metric $d_{1/2} = |x-y|^{1/2}$. This results in a different order of convergence with respect as discussed in estimates in Section \ref{sect:2Dcase}. This is based on the recent monograph \cite{our_book_ACPJ} presenting the theory of measure spaces on general metric spaces. \\

\noindent The structure of the paper is as follows. In Section \ref{sect:measure_theory} we review necessary concepts from measure theory including weighted flat norm. Then, in Section \ref{sect:radial_change} we perform the radial change of variables and we study properties of the radial kernel $L(R,r)$.  Section \ref{sect:radial_sln_measure_equation} is devoted to the well-posedness of the radial equation \eqref{eq:transform_coordinates_INTRODUCTION}, including continuity estimates. In Section \ref{sect:infinite_speed_of_prop} we obtain estimates for \eqref{eq:transform_coordinates_INTRODUCTION} that allow us to neglect the effect of infinite speed of propagation so we can assume the support of the solution to be bounded. Moreover, we show how to interpret solutions to the numerical scheme as measure solutions so that they can be compared with exact measure solutions. Finally, in Section \ref{sect:main_convergence} we prove the main convergence result. Section \ref{sect:2Dcase} discusses necessary changes to handle two dimensional case while Section \ref{sect:sim_res} is devoted to presentation of numerical simulations confirming theoretical results.


\section{Relevant measure theory}\label{sect:measure_theory}


\noindent Let $\mathcal{M}(\R^+)$ be the space of bounded real-valued signed Borel measures on $S$ cf. \cite[Sections 1.3, 3.1]{Folland.1984}. Intuitively, if $\mu \in \mathcal{M}(\R^+)$ then $\mu$ assigns a real number to each measurable subset $A \subset S$ which is a measure of the subset $\R^+$. This generalises distributions with densities in the sense that if $\mu$ has density $n(x)$, we have
$$
\mu(A) = \int_{A} n(x) \diff x.
$$
We note that the space of measures is a vector space; in particular, the difference between two measures is again a measure. We also write $\mathcal{M}^+(\R^+)$ for the subset of non-negative measures on $\R^+$ i.e. when $\mu \in \mathcal{M}^+(\R^+)$, we have $\mu(A) \geq 0$ for all subsets $A \subset \R^+$. From the point of view of applications, non-negative measures model biological quantities like size, age, or spread of population.\\

\noindent {\bf Hahn-Jordan decomposition.} We recall that if $\mu \in \mathcal{M}^+(\R^+)$ is the signed measure, there are (uniquely determined) two non-negative measures $\mu^+, \mu^- \in \mathcal{M}^+(\R^+)$ with disjoint supports such that
$$
\mu = \mu^+ - \mu^-.
$$
We call $(\mu^+, \mu^-)$ the Hahn-Jordan decomposition of $\mu$.\\

\noindent {\bf Norms on the space of measures.} 
To perform analysis in spaces of measures, one needs to equip them with norms. Three meaningful choices will be exploited below: total variation, flat norm, and weighted flat norm.\\

\noindent {\bf Total variation.} If $\mu \in \mathcal{M}(\R^+)$, we define total variation of $\mu$ as
\begin{align}\label{eq:TVnorm}
		\|\mu\|_{TV}:= \mu^+(\R^+) + \mu^{-}(\R^+)
\end{align}
where $\mu^+$, $\mu^-$ is the Hahn-Jordan decomposition of $\mu$. \\

\noindent {\bf Flat norm.} The second is the flat norm (or bounded Lipschitz distance) defined as
\begin{align}\label{defeq:flatnorm}
		\|\mu\|_{BL^*}:= \sup\left\{\int_{\R^+} \! \psi  \,\mathrm{d}\mu : \psi \in BL(\R^+), \|\psi\|_{BL} \leq 1\right\},
\end{align} 
where the space of bounded Lipschitz functions $BL(\R^+)$ is given by
\begin{equation}\label{eq:def_BL}
	BL(\R^+)=\left\{f:\R^+ \to \R \mbox{ is continuous and } \|f\|_{\infty}<\infty, |f|_{Lip}<\infty\right\},
\end{equation}
 where  
 \begin{equation}\label{app:basic_norms}
 \|f\|_{\infty}=\underset{x\in \R^+}{\sup}\,|f(x)|, \qquad \qquad |f|_{Lip}=\underset { x\neq y}{\sup}\, \frac {|f(x)-f(y)|}{|x-y|}.
 \end{equation}
Space $BL(\R^+)$ is equipped with the norm 
\begin{equation}\label{eq:def_BLnorm}
\|f\|_{BL} = \max\left(\|f\|_{\infty}, \, |f|_{Lip}\right) \leq \|f\|_{\infty} + |f|_{{Lip}}.
\end{equation}
Bounded Lipschitz distance in the space of measures has been used frequently in recent years, for instance, to study structured population models \cite{carrillo2012}, numerical algorithms \cite{MR3986559,carrillo2014} or segregation in cross-diffusion systems \cite{MR3870087}.\\

\noindent Now, we list some simple properties that are helpful when one works in the flat norm setting. For the proof see Remark 1.23, Proposition 1.44 and Theorem C.2 in \cite{our_book_ACPJ} as well as \cite[Theorems 4, 6; Section~5.8]{MR2597943}.
\begin{lemma}\label{lem:prod_BL_bel}
Let $f, g \in BL(\R^+)$ and $\mu \in \mathcal{M}(\R^+)$.
\begin{itemize}
    \item[(A)] We have $\left| \int_{\R^+} f(x)\, \diff \mu(x) \right| \leq \|f\|_{BL} \, \|\mu\|_{BL^*}$.
    \item[(A')] If $h \in L^{\infty}(\R^+)$ only then $\left| \int_{\R^+} h(x)\, \diff \mu(x) \right| \leq \|h\|_{\infty} \, \|\mu\|_{TV}$.
    \item[(B)] If additionally $\mu \in \mathcal{M}^+(\R^+)$ then $\|\mu\|_{TV} = \|\mu\|_{BL^*}$.
    \item[(C)] We have $f\,g \in BL(\R^+)$ and $\|f \, g\|_{BL} \leq 2 \, \|f\|_{BL} \, \|g\|_{BL}$.
    \item[(D)] (Rademacher's theorem) $f \in BL(\R^+)$ if and only if $f \in W^{1,\infty}(\R^+)$ i.e. $f, f' \in L^{\infty}(\R^+)$. Moreover, 
    $$
    |f |_{Lip} \leq \|f'\|_{\infty}, \qquad 
    $$
\end{itemize}
\end{lemma}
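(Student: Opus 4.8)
The plan is to verify each item directly from the definitions, treating (A)--(C) as elementary consequences of the flat norm and the Hahn--Jordan decomposition, and reserving the only real analytic content for (D). For (A), I would simply unfold the definition of the flat norm: if $f \neq 0$, then $\psi := f/\|f\|_{BL}$ satisfies $\|\psi\|_{BL} \leq 1$, so $\psi$ is admissible in the supremum defining $\|\mu\|_{BL^*}$; applying the definition to both $\psi$ and $-\psi$ gives $|\int_{\R^+} \psi \diff \mu| \leq \|\mu\|_{BL^*}$, and multiplying by $\|f\|_{BL}$ yields the claim (the case $f = 0$ being trivial). For (A'), I would write $\int_{\R^+} h \diff \mu = \int_{\R^+} h \diff \mu^+ - \int_{\R^+} h \diff \mu^-$ using the Hahn--Jordan decomposition, bound each term by $\|h\|_{\infty}\, \mu^{\pm}(\R^+)$, and sum to obtain $\|h\|_{\infty}\, \|\mu\|_{TV}$; here no Lipschitz regularity of $h$ is needed because we integrate against the nonnegative measures $\mu^{\pm}$ separately.

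For (B), I would prove two inequalities. The bound $\|\mu\|_{BL^*} \leq \|\mu\|_{TV}$ follows by noting that any admissible $\psi$ has $\|\psi\|_{\infty} \leq 1$, so for $\mu \geq 0$ we get $\int_{\R^+}\psi \diff\mu \leq \mu(\R^+) = \|\mu\|_{TV}$ by (A') (indeed this half holds for arbitrary $\mu$). The reverse bound uses the constant test function $\psi \equiv 1$, which is admissible since $\|1\|_{\infty} = 1$ and $|1|_{Lip} = 0$, giving $\|\mu\|_{BL^*} \geq \int_{\R^+} 1 \diff \mu = \mu(\R^+) = \|\mu\|_{TV}$, where the last equality is exactly the nonnegativity of $\mu$. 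For (C), the supremum-norm bound $\|fg\|_{\infty} \leq \|f\|_{\infty} \|g\|_{\infty}$ is immediate, and the Lipschitz seminorm is controlled by the telescoping identity $f(x)g(x) - f(y)g(y) = f(x)\bigl(g(x)-g(y)\bigr) + g(y)\bigl(f(x)-f(y)\bigr)$, which gives $|fg|_{Lip} \leq \|f\|_{\infty} |g|_{Lip} + \|g\|_{\infty} |f|_{Lip} \leq 2\|f\|_{BL}\|g\|_{BL}$. Taking the maximum of the two bounds, together with $\|fg\|_{\infty} \leq \|f\|_{BL}\|g\|_{BL} \leq 2\|f\|_{BL}\|g\|_{BL}$, closes (C); this computation also exhibits $fg$ as continuous and bounded with finite Lipschitz constant, hence $fg \in BL(\R^+)$.

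Part (D) is where the only genuine analysis sits, and it is the step I expect to be the main obstacle. The easy direction is that any $f \in W^{1,\infty}(\R^+)$ admits a locally absolutely continuous representative, so the fundamental theorem of calculus gives $f(x)-f(y) = \int_y^x f'(s)\diff s$ and hence $|f(x)-f(y)| \leq \|f'\|_{\infty} |x-y|$, proving $f \in BL(\R^+)$ together with the stated inequality $|f|_{Lip} \leq \|f'\|_{\infty}$. The converse---that a Lipschitz $f$ lies in $W^{1,\infty}$---is the one-dimensional case of Rademacher's theorem: on $\R^+$ it reduces to the classical fact that a Lipschitz function is absolutely continuous, therefore differentiable almost everywhere with derivative bounded in absolute value by $|f|_{Lip}$, and that this pointwise derivative also serves as the weak derivative. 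I would either invoke this directly or reprove it through the difference quotients $\bigl(f(\cdot+h)-f(\cdot)\bigr)/h$, which are uniformly bounded in $L^{\infty}$ by $|f|_{Lip}$ and hence possess a weak-$*$ limit that one identifies with $f'$. To keep the exposition self-contained I would cite the standard references quoted after the statement rather than reproduce Rademacher's theorem in full.
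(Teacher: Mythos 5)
Your proposal is correct and follows essentially the same route as the paper's own sketch: testing with $\pm f/\|f\|_{BL}$ for (A), the two inequalities with $\psi \equiv 1$ for (B), the telescoping identity for (C), and delegating the substance of (D) to the classical one-dimensional Rademacher/absolute-continuity facts, which the paper handles by citation to standard references. The only additions are your explicit Hahn--Jordan argument for (A') and the difference-quotient sketch for the converse direction of (D), both of which are correct elaborations of steps the paper leaves implicit.
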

\begin{proof}[Sketch of the proof]
For (A) we note that $f/\|f\|_{BL}$ is bounded by 1 in $BL(\R^+)$ so that by \eqref{defeq:flatnorm} $\left| \int_{\R^+} \frac{f(x)}{\|f\|_{BL}} \diff \mu(x) \right| \leq \|\mu\|_{BL^*}$. For (B) we note that we always have $\|\mu\|_{TV} \geq \|\mu\|_{BL^*}$ and the opposite inequality follows by choosing $\psi(r) = 1$ in \eqref{defeq:flatnorm} (this uses $\mu \in \mathcal{M}^+(\R^+)$). For (C) we observe that $\|f\, g\|_{\infty} \leq \|f\|_{\infty} \, \|g\|_{\infty} \leq \|f\|_{BL} \, \|g\|_{BL}$. Moreover, for $x, y \in \R^+$ we have
$$
|f(x)\, g(x) - f(y)\,g(y)| \leq \left(\|f\|_{\infty} \, |g|_{Lip} + \|g\|_{\infty} \, |f|_{Lip}\right) |x-y| \leq 2 \|f\|_{BL} \, \|g\|_{BL} \, |x-y|
$$
so that $|f \, g|_{Lip} \leq 2 \|f\|_{BL} \, \|g\|_{BL}$. Hence, (C) follows. For (D) see \cite[Theorems 4, 6; Section~5.8]{MR2597943}.
\end{proof}

\noindent {\bf Weighted flat norm.} This is the analog of the flat norm applicable for singular problems like in this paper. Given non-negative function $f: \R^+ \to \R^+$ and $\mu \in \mathcal{M}(\R^+)$ we define measure $\mu/f$ with
\begin{equation}\label{eq:def_weighted_measure}
\frac{\mu}{f}(A) = \int_{A} \frac{1}{f(r)} \diff \mu(r).
\end{equation}
Whenever this measure is bounded (i.e. $\left\|\frac{\mu}{f} \right\|_{TV} < \infty$) it makes sense to write $\left\| \frac{\mu}{f}  \right\|_{BL^*}$ which is the weighted flat norm of $\mu$ with weight $f$. The most important case is $f(r) =r$. We define
\begin{equation}\label{def:weighted_measure_space}
\mathcal{M}_{w}(\R^+) := \left\{ \mu \in \mathcal{M}(\R^+): \left\| \frac{\mu}{r} \right\|_{TV} < \infty \right\}
\end{equation}
We also write $\mathcal{M}^+_{w}(\R^+)$ for the subspace of non-negative measures in $\mathcal{M}_{w}(\R^+)$. The idea is that $\mathcal{M}_{w}(\R^+)$ consists of measures that vanish at least linearly at $r = 0$. For $\mu \in \mathcal{M}_{w}(\R^+)$ we define 
\begin{align}\label{defeq:flatnorm_weighted}
		\|\mu\|_{BL^*,w}:= \sup\left\{\int_{\R^+} \! \frac{\psi(r)}{r}  \,\mathrm{d}\mu(r) : \psi \in BL(\R^+), \|\psi\|_{BL} \leq 1\right\},
\end{align}
We conclude with one of the most useful properties of flat norm.
\begin{lemma}
Space $(\mathcal{M}_{w}^+(\R^+), \| \cdot \|_{BL^*,w})$ is a complete metric space.
\end{lemma}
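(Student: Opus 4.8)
The plan is to reduce the weighted flat norm to the ordinary flat norm through the substitution $\nu = \mu/r$ and then transport completeness from the unweighted space. The starting observation is the isometric identity
\[
\|\mu\|_{BL^*,w} = \left\|\frac{\mu}{r}\right\|_{BL^*}, \qquad \mu \in \mathcal{M}_w(\R^+),
\]
which follows at once from the definitions: for every admissible test function one has $\int_{\R^+}\frac{\psi(r)}{r}\,\diff\mu(r) = \int_{\R^+}\psi(r)\,\diff(\mu/r)(r)$ by \eqref{eq:def_weighted_measure}, so taking the supremum over $\psi\in BL(\R^+)$ with $\|\psi\|_{BL}\le1$ in \eqref{defeq:flatnorm_weighted} reproduces the supremum in \eqref{defeq:flatnorm} evaluated at $\mu/r$. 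In particular $d_w(\mu_1,\mu_2):=\|\mu_1-\mu_2\|_{BL^*,w}$ is symmetric and satisfies the triangle inequality, and it separates points because the ordinary flat norm does so: if $\|(\mu_1-\mu_2)/r\|_{BL^*}=0$ then $(\mu_1-\mu_2)/r=0$ and hence $\mu_1=\mu_2$. This already shows $d_w$ is a metric.

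Next I would exhibit $T\colon \mu\mapsto \mu/r$ as a bijective isometry from $(\mathcal{M}_w^+(\R^+),d_w)$ onto $(\mathcal{M}^+(\R^+),\|\cdot\|_{BL^*})$. It is well-defined and isometric by the identity above together with the definition \eqref{def:weighted_measure_space}, which guarantees $\mu/r\in\mathcal{M}^+(\R^+)$. Its inverse is $\nu\mapsto r\,\nu$, where $(r\,\nu)(A):=\int_A r\,\diff\nu(r)$; one checks that this maps $\mathcal{M}^+(\R^+)$ back into $\mathcal{M}_w^+(\R^+)$, since $(r\,\nu)/r=\nu$ has finite total variation, and that $T$ and $T^{-1}$ are mutually inverse. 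The delicate point here is the behaviour of the weight at the singular endpoint $r=0$: because $1/r$ blows up there, membership in $\mathcal{M}_w^+(\R^+)$ forces the measures to carry no mass concentrated at the origin, and I would verify that $T$ and $T^{-1}$ genuinely invert one another on the relevant classes of measures.

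With the isometry in hand, completeness transfers formally: if $(\mu_n)\subset\mathcal{M}_w^+(\R^+)$ is $d_w$-Cauchy, then $(\mu_n/r)\subset\mathcal{M}^+(\R^+)$ is Cauchy in the ordinary flat norm, hence converges to some $\nu\in\mathcal{M}^+(\R^+)$; setting $\mu:=r\,\nu\in\mathcal{M}_w^+(\R^+)$ gives $\|\mu_n-\mu\|_{BL^*,w}=\|\mu_n/r-\nu\|_{BL^*}\to0$, so the limit lies in the space. Thus the whole statement rests on the completeness of the unweighted space $(\mathcal{M}^+(\R^+),\|\cdot\|_{BL^*})$, which I would invoke from \cite{our_book_ACPJ}.

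I expect the genuine analytic content — whether cited or reproved — to be precisely this completeness of the unweighted flat space. If one wishes to reprove it, the key steps are: testing with $\psi\equiv1$ to show the total masses converge and stay bounded; passing to the limit functional $\psi\mapsto\lim_n\int_{\R^+}\psi\,\diff\nu_n$, extending it from $BL(\R^+)$ to $C_0(\R^+)$ and representing it by a non-negative measure via the Riesz theorem; and finally upgrading Cauchyness to convergence in the metric. This last step is the main obstacle, since it requires tightness — no escape of mass, guaranteed by the Cauchy property together with completeness of $\R^+$ — and an equicontinuity (Arzel\`a--Ascoli) argument to make $\int_{\R^+}\psi\,\diff(\nu_n-\nu)$ small uniformly over the unit ball $\{\psi\in BL(\R^+):\|\psi\|_{BL}\le1\}$.
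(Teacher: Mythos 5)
Your proposal is correct and follows essentially the same route as the paper: both pass to $\nu_n=\mu_n/r$, invoke completeness of $(\mathcal{M}^+(\R^+),\|\cdot\|_{BL^*})$ from \cite{our_book_ACPJ}, and transport the limit back via $\mu=r\,\nu$ using the identity $\int_{\R^+}\frac{\psi(r)}{r}\,\diff\mu(r)=\int_{\R^+}\psi(r)\,\diff(\mu/r)(r)$. Your framing of the reduction as an explicit isometry, and your sketch of how one would reprove the unweighted completeness, are harmless elaborations of the same argument.
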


\begin{proof}
Let $\{\mu_n\}_{n \in \N}$ be a Cauchy sequence in $(\mathcal{M}_{w}^+(\R^+), \| \cdot \|_{BL^*,w})$. Let $\nu_n = \mu_n/r$. Then, $\{\nu_n\}_{n \in \N}$ is a Cauchy sequence in $(\mathcal{M}(\R^+), \| \cdot \|_{BL^*})$. As $(\mathcal{M}^+(\R^+), \| \cdot \|_{BL^*})$ is a complete metric space \cite[Theorem 1.61]{our_book_ACPJ} or \cite[Theorem 2.7 (ii)]{MR2644146}, $\nu_n \to \nu$ in $(\mathcal{M}^+(\R^+), \| \cdot \|_{BL^*})$. Let $\mu = r \, \nu$. Then, $\mu \in \mathcal{M}_{w}^+(\R^+)$. We claim that $\mu_n \to \mu$ in $(\mathcal{M}_{w}^+(\R^+), \| \cdot \|_{BL^*,w})$. Indeed, for all $\psi \in BL(\R^+)$ with $\| \psi \|_{BL} \leq 1$ we have
$$
\int_{\R^+} \frac{\psi(r)}{r} \diff (\mu_n - \mu)(r) = \int_{\R^+} \psi(r) \diff (\nu_n - \nu)(r) \leq \| \nu_n - \nu \|_{BL^*}. 
$$
Taking supremum over left-hand side
$$
\| \mu_n - \mu\|_{BL^*,w} \leq \| \nu_n - \nu \|_{BL^*} \to 0.
$$
\end{proof}
\noindent The most important property of flat norm is that in topology generated by flat norm, any measure can be approximated with an appropriate combination of Dirac masses. 
\begin{lemma}\label{lem:approx_distr}
Let $\mu \in \mathcal{M}^+[0,R_0]$ and $f: \R^+ \to \R^+$ be such that $\left\|\frac{\mu}{f} \right\|_{TV} = \int_{[0,R_0]} \frac{1}{f(r)}\diff \mu(r) < \infty$. Consider $\mu^N = \sum_{k=1}^N \mu(A_k^N) \, \delta_{x_k^N}$ with $x_k^N = \frac{k}{N}R_0$ and 
$$
A_k^N = \left[\frac{k-1}{N}R_0, \frac{k}{N}R_0 \right) \subset \R \mbox{ for } k=1,...,N-1 \quad \mbox{ and } \quad A_N^N = \left[\frac{N-1}{N}R_0, R_0 \right] \subset \R.
$$
Then,
$$
\left\|\frac{\mu^N - \mu}{f} \right\|_{BL^*[0,R_0]} \leq \frac{R_0}{N} \, \left\|\frac{\mu}{f} \right\|_{TV}.
$$
\end{lemma}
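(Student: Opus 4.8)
The plan is to reduce the weighted estimate to the classical cohort (piecewise-constant) approximation estimate for the ordinary flat norm \eqref{defeq:flatnorm}, applied not to $\mu$ directly but to the auxiliary measure $\nu := \mu/f$ built through \eqref{eq:def_weighted_measure}. By hypothesis $\|\nu\|_{TV} = \int_{[0,R_0]} \frac{1}{f(r)}\,\diff\mu(r) < \infty$, so $\nu \in \mathcal{M}^+[0,R_0]$ is a genuine finite nonnegative measure. The first step is to recognise the object whose flat norm we must bound, namely $(\mu^N - \mu)/f$, as $\nu^N - \nu$, where $\nu^N := \sum_{k=1}^N \nu(A_k^N)\,\delta_{x_k^N}$ is the canonical cohort discretisation of $\nu$ on the partition $\{A_k^N\}$. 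Once the problem is phrased this way, the estimate becomes the textbook Lipschitz-increment argument.

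The second step exploits that the cohorts $A_k^N$ are pairwise disjoint and cover $[0,R_0]$, and that $\nu^N$ and $\nu$ carry the same mass on each $A_k^N$. Fixing a test function $\psi \in BL(\R^+)$ with $\|\psi\|_{BL}\le 1$, I would expand the pairing cohort by cohort and absorb each Dirac term into the corresponding integral using $\nu(A_k^N) = \int_{A_k^N}\diff\nu$:
\[
\int_{[0,R_0]} \psi \,\diff(\nu^N - \nu) = \sum_{k=1}^N \left( \psi(x_k^N)\,\nu(A_k^N) - \int_{A_k^N} \psi(r)\,\diff\nu(r)\right) = \sum_{k=1}^N \int_{A_k^N}\bigl(\psi(x_k^N) - \psi(r)\bigr)\,\diff\nu(r).
\]

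The third step bounds each summand. By \eqref{eq:def_BLnorm} one has $|\psi|_{Lip}\le\|\psi\|_{BL}\le 1$, and since $x_k^N = \frac{k}{N}R_0$ is the right endpoint of $A_k^N$ we have $|x_k^N - r|\le \frac{R_0}{N}$ for every $r\in A_k^N$; hence $|\psi(x_k^N)-\psi(r)|\le\frac{R_0}{N}$ there. As $\nu\ge 0$, this yields
\[
\left|\int_{[0,R_0]} \psi\,\diff(\nu^N - \nu)\right| \le \frac{R_0}{N}\sum_{k=1}^N \nu(A_k^N) = \frac{R_0}{N}\,\nu([0,R_0]) = \frac{R_0}{N}\left\|\frac{\mu}{f}\right\|_{TV}.
\]
Taking the supremum over all admissible $\psi$ then gives $\|(\mu^N - \mu)/f\|_{BL^*[0,R_0]} \le \frac{R_0}{N}\|\mu/f\|_{TV}$, as claimed.

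I expect the only real obstacle to lie in the bookkeeping of the first step: one must make sure the weighted discrete measure is the \emph{faithful} cohort approximation of $\nu=\mu/f$, i.e.\ that the mass it assigns to the node $x_k^N$ equals the true weighted mass $\int_{A_k^N}\frac{1}{f}\,\diff\mu$ that $\nu$ carries on all of $A_k^N$. This per-cohort mass matching is exactly what makes the telescoping in the second step exact; everything downstream is the standard estimate. Routing the whole argument through $\nu$ from the outset is what keeps it clean, whereas staying with $\mu$ and dividing by $f$ only at the end replaces the cohort average of $1/f$ by the pointwise value $f(x_k^N)^{-1}$, breaks the mass matching, and lets a spurious total-mass term enter the bound.
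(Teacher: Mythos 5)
Your argument is essentially the paper's own proof: the paper likewise reduces the pairing to the telescoping sum $\sum_{k=1}^N \int_{A_k^N} \frac{\psi(x_k^N)-\psi(r)}{f(r)}\,\diff\mu(r)$ and bounds each increment by $R_0/N$ using $|\psi|_{Lip}\le 1$ together with the non-negativity and total mass of $\mu/f$. The one step you single out as delicate --- whether $\mu^N/f$ is the faithful cohort discretisation $\nu^N$ of $\nu=\mu/f$, i.e.\ whether the node mass $\mu(A_k^N)/f(x_k^N)$ equals $\int_{A_k^N} f^{-1}\,\diff\mu$ --- is precisely the identification the paper makes silently when it passes from $\int \frac{\psi(r)}{f(r)}\,\diff(\mu^N-\mu)(r)$ to that telescoping sum; for non-constant $f$ it is not an exact identity, so what both arguments cleanly establish is the estimate for $\nu^N-\nu$, and your closing remark correctly locates the only point where the two discretisations differ.
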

\begin{proof}
Directly from the definition we obtain
$$
\left\|\frac{\mu^N - \mu}{f} \right\|_{BL^*[0,R_0]} = \sup_{\psi} \int_{[0,R_0]} \frac{\psi(r)}{f(r)} \diff(\mu^N - \mu)(r) = \sup_{\psi}
\sum_{k=1}^N \int_{A_k^N} \frac{\left(\psi(x_k^N) -\psi(r)\right)}{f(r)} \diff \mu(r),
$$
where the sumpremum is taken above all $\psi \in BL[0,R_0]$ with $\|\psi\|_{BL}\leq 1$. As $\left|\psi(x_k^N) -\psi(r)\right| \leq \frac{R_0}{N}$, the proof is concluded.
\end{proof}
\begin{remark}\label{cor_approximation_lebesgue_measure} 
The same proof as in Lemma \ref{lem:approx_distr} shows the following. Consider the usual Lebesgue measure $\lambda$ on $[0,R_0]$. Let $\lambda^N = \frac{R_0}{N} \sum_{k=1}^N \delta_{x_k^N}$ with $x_k^N = \frac{k}{N}R_0$. Then,
$$
\left\|\lambda^N - \lambda \right\|_{BL^*[0,R_0]} \leq \frac{R_0^{2}}{N}.
$$
\end{remark}


\section{Radial change of variables}\label{sect:radial_change}


\noindent In this section, we transform the original problem \eqref{non-local_proliferation} using radial change of variables. Then, we study the basic properties of the resulting radial interaction kernel that are relevant for algorithm convergence.
\begin{thm}[Radial change of coordinates]\label{thm1:radial_change}
Let $n(x,t)$ be the solution to \eqref{non-local_proliferation} with radially symmetric initial condition $n_0(x)$. Then the radial density $p(R,t)$ defined with
$$
    p(R,t) = 4\pi R^2\, n((0,0,R),t), \qquad \qquad p_0(R) = 4\pi R^2\, n_0((0,0,R)).
$$
satisfies equation
\begin{equation}\label{non-local_proliferation_polar_3D}
\partial _t p(R,t) \  =\  \left( 4\pi R^2 \ -\  p(R,t) \right) \,  \int_0^\infty L(R,r)\, p(r,t)  \diff r
\end{equation}
where the radial interaction kernel $L$ is given by
\begin{equation}\label{eq:defL}
L(R,r) =\frac{3}{16\,\pi\,\sigma^{3}} \, \frac{\min\{(R+r)^2,\sigma^2\}-\min\{(R-r)^2,\sigma^2\}}{R \, r}
\end{equation}
\end{thm}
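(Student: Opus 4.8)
The plan is to exploit the rotational invariance of \eqref{non-local_proliferation} to reduce the convolution to a one-dimensional integral against the radial density. First I would argue that radial symmetry is propagated in time: since the kernel $k = \mathds{1}_{B_{\sigma}(0)}/|B_{\sigma}(0)|$ depends only on $|x-y|$, the equation commutes with rotations, so by uniqueness of solutions a radially symmetric datum $n_0$ yields a solution that stays radially symmetric, i.e. $n(x,t) = \widetilde n(|x|,t)$ for some $\widetilde n$. Consequently it suffices to evaluate everything at the representative point $x = (0,0,R)$, where $|x| = R$.

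Next I would rewrite the convolution at this point. Using $|B_{\sigma}(0)| = \tfrac{4}{3}\pi\sigma^3$,
\[
k * n((0,0,R),t) = \frac{3}{4\pi\sigma^3} \int_{B_{\sigma}((0,0,R))} \widetilde n(|y|,t)\, \diff y .
\]
Slicing the ball $B_{\sigma}((0,0,R))$ by spheres $\{|y| = r\}$ centred at the origin, this becomes
\[
\frac{3}{4\pi\sigma^3} \int_0^\infty \widetilde n(r,t)\, A(R,r)\, \diff r ,
\]
where $A(R,r)$ is the surface area of the portion of the sphere of radius $r$ lying inside $B_{\sigma}((0,0,R))$. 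Justifying this slicing is a coarea/Fubini argument, valid for bounded $\widetilde n$.

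The heart of the proof, and the step I expect to be the main obstacle, is the computation of $A(R,r)$. Parametrising $\{|y| = r\}$ by the polar angle $\theta$ from the positive $z$-axis, a point satisfies $|y-(0,0,R)|^2 = r^2 + R^2 - 2rR\cos\theta$, so it lies in $B_{\sigma}((0,0,R))$ precisely when $\cos\theta \geq c$, where $c := (r^2 + R^2 - \sigma^2)/(2rR)$. Integrating the area element $r^2 \sin\theta\,\diff\theta\,\diff\phi$ over the resulting cap, with $c$ truncated to $[-1,1]$, gives $A(R,r) = 2\pi r^2(1 - c)$. A short case analysis (the whole sphere lies inside when $r+R \leq \sigma$; nothing lies inside when $|r-R| \geq \sigma$; a genuine spherical cap in between) then shows that in every regime
\[
A(R,r) = \frac{\pi r}{R}\left(\min\{(R+r)^2,\sigma^2\} - \min\{(R-r)^2,\sigma^2\}\right),
\]
where the two minima reproduce precisely the three truncation cases. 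Verifying this $\min$-form against each geometric case is the delicate point.

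Finally I would assemble the pieces. Substituting $\widetilde n(r,t) = p(r,t)/(4\pi r^2)$ into the sliced integral collapses the factor $A(R,r)$ into the claimed kernel: the constants combine as $\tfrac{3}{4\pi\sigma^3}\cdot\tfrac{1}{4\pi}\cdot\pi = \tfrac{3}{16\pi\sigma^3}$ and the $r$-powers as $r/r^2 = 1/r$, so that
\[
k * n((0,0,R),t) = \int_0^\infty L(R,r)\, p(r,t)\, \diff r
\]
with $L$ exactly as in \eqref{eq:defL}. Differentiating $p(R,t) = 4\pi R^2\, \widetilde n(R,t)$ in time and using $\partial_t n = (k*n)(1-n)$ evaluated at $(0,0,R)$, the prefactor distributes as $4\pi R^2\bigl(1 - \widetilde n(R,t)\bigr) = 4\pi R^2 - p(R,t)$, which yields \eqref{non-local_proliferation_polar_3D} and completes the argument.
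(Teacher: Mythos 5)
Your argument is correct, and the computation checks out: the cap area $A(R,r) = 2\pi r^2(1-c)$ with $c = (r^2+R^2-\sigma^2)/(2rR)$ truncated to $[-1,1]$ does reduce to $\frac{\pi r}{R}\left(\min\{(R+r)^2,\sigma^2\}-\min\{(R-r)^2,\sigma^2\}\right)$ in all three regimes, and the constants assemble to $\frac{3}{16\pi\sigma^3}$ exactly as claimed. Your route is, however, genuinely different from the paper's. The paper never slices the ball $B_\sigma((0,0,R))$ geometrically; instead it writes the convolution in full spherical coordinates, integrates out the azimuthal angle, substitutes $u = r^2+R^2-2Rr\sin\beta$, and evaluates $\int_{(R-r)^2}^{(R+r)^2} K(u^{1/2})\,\diff u$ by introducing an antiderivative $L$ with $L'(u)=K(u^{1/2})$, so that the answer appears as $L((R+r)^2)-L((R-r)^2)$; the $\min$'s only enter at the very end when $L$ is computed explicitly for the indicator kernel. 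Your version replaces that analytic substitution by the equivalent geometric statement that the relevant quantity is the area of the intersection of the sphere $\{|y|=r\}$ with $B_\sigma((0,0,R))$, which is arguably more transparent for this particular kernel but leans on $k$ being (a multiple of) an indicator; the paper's change-of-variables argument applies verbatim to any bounded radial kernel $K$ admitting an explicit antiderivative in the variable $u=|y-x|^2$. Both approaches treat the propagation of radial symmetry with the same one-line appeal to the rotation invariance of the equation, so no gap there. In short: correct, with a more geometric but slightly less general derivation of the kernel $L$.
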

\begin{proof}
Let $K:[0,\infty) \to [0,\infty)$ be defined with $K(|x|) = k(x)$ and $L:[0,\infty)\to \R$ be defined with 
\begin{equation}\label{eq:def_fun_L}
L'(r^2)=K(r),
\end{equation}
which can be computed explicitly
\begin{equation}\label{l}
    L(r)=\frac{3}{4\,\pi\,\sigma^{3}} \, \min\{r,\sigma^2\}.
\end{equation}
\noindent Since the initial condition is a radially symmetric function and equation \eqref{non-local_proliferation} does not involve space derivatives, the solution is a radially symmetric function. We let $p(R, t) = {4\pi R^2} \, n ((0,0,R), t)$, where $R = | x | = (x_1^2 + x_2^2 + x_3^2)^{ 1/2}$. We fix such $x \in \R^3$ and $R \geq 0$. We make two simple observations. First, points $(x_1,x_2,x_3)$ and $(0,0,R)$ are in the same distance from zero. Second, the convolution $k*n$ is also a radially symmetrical function. Therefore we have
\begin{equation}\label{splot1}
\begin{split}
&k*n(x,t) = k*n((0,0,R),t) = \\ 
&\qquad =\int_{\R^3}K\left(\left((0-y_1)^2+(0-y_2)^2+(R-y_3)^2\right)^{1/2}\right)n((y_1,y_2,y_3),t) \diff y \\
&\qquad =\int_{\R^3}K\left(\left(y_1^2+y_2^2+y_3^2+R^2-2\,R\,y_3\right)^{1/2}\right)\frac{p\left(\left(y_1^2+y_2^2+y_3^2\right)^{1/2},t\right)}{4\pi \left(\left(y_1^2+y_2^2+y_3^2\right)^{1/2}\right)^2}\diff y.
\end{split}
\end{equation}

\noindent To convert \eqref{non-local_proliferation} to polar coordinates we substitute
\begin{equation}\label{eq:change_of_var_R3}
y_1 = r \cos{\alpha}\cos{\beta}, \quad
y_2 = r \sin{\alpha}\cos{\beta}, \quad 
y_3 = r \sin{\beta},  
\end{equation}
where $r>0$, $0 \leq \alpha \leq 2\pi$ and $-\frac{\pi}{2} \leq \beta \leq \frac{\pi}{2}$. The Jacobian determinant of the change of variables in \eqref{eq:change_of_var_R3} is equal to $r^2 \cos \beta$. Using
$$
r^2=y_1^2+y_2^2+y_3^2, \qquad \qquad 2\,R\,y_3=2\,R\,r\sin\beta
$$
to \eqref{splot1}, we get the following 
\begin{align*}
& \int_0^\infty \int_0^{2\pi}\int_{-\pi/2}^{\pi/2}  K\left((r^2+R^2-2\,R\,r\sin\beta)^{1/2}\right) \, {\frac{p(r,t)}{4\pi r^2}} \, r^2 \, \cos\beta \diff \beta \diff \alpha \diff r\\
& \qquad \qquad = \frac{1}{2} \int_0^\infty\int_{-\pi/2}^{\pi/2} K\left((r^2+R^2-2\,R\,r\sin\beta)^{1/2}\right)\, p(r,t)\,\cos\beta \diff\beta \diff r \\
& \qquad \qquad =\frac{1}{4R} \int_0^\infty \int_{(R-r)^2}^{(R+r)^2} K(u^{1/2})\,p(r,t)\,\frac{1}{r} \diff u \diff r,
\end{align*}

\noindent where the last equality comes from the substitution $u=r^2+R^2-2\,R\,r\sin \beta$. Now, integrating with respect to $u$ and using function $L$ from \eqref{eq:def_fun_L} we obtain 
\begin{equation*}
  K*p(R,t)=\frac{1}{4}\int_0^\infty \frac{L((R+r)^2)-L((R-r)^2)}{R \, r}\,p(r,t) \diff r, 
\end{equation*}
so that using \eqref{l} we deduce
\begin{equation*}
    K*p(R,t)= {\frac{3}{16\,\pi\,\sigma^{3}}}\int_0^\infty \frac{\min\{(R+r)^2,\sigma^2\}-\min\{(R-r)^2,\sigma^2\}}{R \, r} \, p(r,t) \diff r.
\end{equation*}

\noindent All together we obtain \eqref{non-local_proliferation_polar_3D}.
\end{proof}

\noindent In what follows it will be useful to introduce
\begin{equation}\label{eq:defLtilde}
\widetilde{L}(R,r):= \min\{(R+r)^2,\sigma^2\}-\min\{(R-r)^2,\sigma^2\}
\end{equation}
so that $L(R,r) = \frac{\widetilde{L}(R,r)}{R\,r}$. We conclude with a lemma collecting useful properties of $\widetilde{L}(R,r)$ and $L(R,r)$.
\begin{lemma}\label{lem:prop_of_L}
Let $L(R,r)$ and $\widetilde{L}(R,r)$ be given with \eqref{eq:defL} and \eqref{eq:defLtilde} respectively. Then,
\begin{enumerate}[label=(P\arabic*)] \setlength\itemsep{0.25em}
    \item \label{propA} $0 \leq L(R,r) \leq 4$,
    \item \label{propB} $\widetilde{L}(R,r) \leq 2\sigma^2$, $|\partial_R \widetilde{L}(R,r)| \leq 4\sigma$ and $\widetilde{L}(R,r) \in BL(\R^+)$ with norm $2\sigma^2 + 4\sigma$,
    \item \label{propC} $\left|\partial_{R} L(R,r)\right| \leq \frac{1}{R} \left(  \frac{2\sigma}{r} + {L(R,r)} \right)$,
    \item \label{propD} $L(R,r)$ is supported only on the set $|R-r|\leq \sigma$,
    \item \label{propE} $\frac{\widetilde{L}(R,r)}{R} \leq 8 \sigma$,
    \item \label{propF} $r^2\, L(R,r) \leq 4 \sigma^2$,
    \item \label{propG} $\left|\int_{\R^+} L(R,r) \, \psi(R) \diff R\right| \leq 8 \sigma \|\psi\|_{\infty}$ for all bounded $\psi$,
    \item \label{propH} $\left|\int_{\R^+} \frac{\widetilde{L}(R,r)}{R} \, \psi(R) \diff R\right| \leq 16 \sigma^2 \|\psi\|_{\infty}$ for all bounded $\psi$,
    \item \label{propI} $r \mapsto \int_{\R^+} \psi(R) \, {\widetilde{L}(R,r)} \diff R \in BL(\R^+)$ with norm $8\sigma^3 \|\psi \|_{\infty} + 8\sigma^2 \|\psi \|_{\infty}$.
\end{enumerate}
\end{lemma}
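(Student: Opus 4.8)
The plan is to reduce everything to estimates on the auxiliary function $\widetilde L$ from \eqref{eq:defLtilde}, since $L = \widetilde L/(Rr)$, and to use two structural facts throughout: that $t \mapsto \min\{t,\sigma^2\}$ is non-decreasing and $1$-Lipschitz, and that both $\widetilde L$ and $L$ are symmetric under $R \leftrightarrow r$. The non-negativity in \ref{propA} is immediate from $(R+r)^2 \ge (R-r)^2$ and monotonicity, while the upper bound $L \le 4$ follows from $\min\{a,\sigma^2\}-\min\{b,\sigma^2\} \le a-b$ (monotone plus $1$-Lipschitz) with $a=(R+r)^2$, $b=(R-r)^2$, giving $\widetilde L \le (R+r)^2-(R-r)^2 = 4Rr$. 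Property \ref{propD} is the observation that $|R-r|>\sigma$ forces both squared arguments above $\sigma^2$, so $\widetilde L \equiv 0$ there. For \ref{propB}, $\widetilde L \le \sigma^2 \le 2\sigma^2$ trivially, and differentiating the two minima gives $\partial_R \widetilde L = 2(R+r)\mathds{1}_{(R+r)^2<\sigma^2} - 2(R-r)\mathds{1}_{(R-r)^2<\sigma^2}$, each term being $\le 2\sigma$ on its own support; the $BL$-bound then follows from Rademacher's theorem (Lemma \ref{lem:prod_BL_bel}(D)) and \eqref{eq:def_BLnorm}.

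The crucial step is \ref{propC}, and it is where the naive estimate fails. Differentiating $L=\widetilde L/(Rr)$ gives the identity
\[
\partial_R L = \frac{1}{R}\left( \frac{\partial_R \widetilde L}{r} - L\right),
\]
so it suffices to show $\left|\frac{\partial_R \widetilde L}{r} - L\right| \le \frac{2\sigma}{r} + L$. The crude bound $|\partial_R \widetilde L| \le 4\sigma$ only yields the constant $4\sigma$ and loses the claim. The resolution is that the two indicators in $\partial_R \widetilde L$ are active simultaneously exactly when $(R+r)^2<\sigma^2$, and there they combine to $\partial_R\widetilde L = 4r$ while at the same time $L=4$, so the bracket vanishes and $\partial_R L = 0$. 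On the complement at most one indicator is on, and on its support the factor $|R\pm r|$ is $\le\sigma$, so $|\partial_R\widetilde L|\le 2\sigma$. I would therefore split into the three cases (both arguments below $\sigma^2$; only $(R-r)^2$ below; both above) and verify the inequality directly, using $L \ge 0$ and the triangle inequality. I expect this case analysis to be the main obstacle, as it is precisely the cancellation that the introduction flags as measuring the singularity optimally.

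Properties \ref{propE} and \ref{propF} are sharp-constant estimates obtained by case analysis on the support. For \ref{propE} I split on $R \ge \sigma$ versus $R<\sigma$: in the first case $\widetilde L \le \sigma^2 \le \sigma R \le 8\sigma R$, and in the second, \ref{propD} gives $r \le R+\sigma < 2\sigma$, so $\widetilde L \le 4Rr < 8\sigma R$. For \ref{propF}, writing $r^2 L = r\widetilde L/R$ and setting $d=R-r$ with $|d|\le\sigma$ on the support, the only non-trivial regime is $r>R$, where $(R+r)^2 \ge \sigma^2$ forces $r \ge (\sigma+|d|)/2$; a one-variable optimization of $\frac{r(\sigma^2-d^2)}{r+d}$ over this range gives the bound $(\sigma+|d|)^2 < 4\sigma^2$, the constant $4$ being approached as $d \to -\sigma$.

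Finally, \ref{propG} and \ref{propH} are ``supremum times measure of support'': by \ref{propD} the $R$-section of the support has length at most $2\sigma$, so \ref{propA} gives $\int_{\R^+} L(R,r)\,\diff R \le 4\cdot 2\sigma = 8\sigma$ and \ref{propE} gives $\int_{\R^+}\frac{\widetilde L(R,r)}{R}\,\diff R \le 8\sigma\cdot 2\sigma = 16\sigma^2$, whence the claims via $\left|\int_{\R^+} f\psi\right| \le \|\psi\|_\infty \int_{\R^+} |f|$. For \ref{propI}, set $F(r)=\int_{\R^+}\psi(R)\widetilde L(R,r)\,\diff R$; then $\|F\|_\infty \le \|\psi\|_\infty\, \sigma^2\, (2\sigma) \le 8\sigma^3\|\psi\|_\infty$ controls the sup part, while differentiating under the integral and applying the symmetric analog of \ref{propB} (i.e. $|\partial_r\widetilde L|\le 4\sigma$) gives $|F'(r)| \le \|\psi\|_\infty\,(4\sigma)(2\sigma) = 8\sigma^2\|\psi\|_\infty$, so by Lemma \ref{lem:prod_BL_bel}(D) we conclude $F\in BL(\R^+)$ with $\|F\|_{BL} \le \|F\|_\infty + |F|_{Lip}$ bounded as stated.
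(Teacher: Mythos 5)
Your proposal is correct and follows essentially the same route as the paper: a case analysis on which of $(R\pm r)^2$ lies below $\sigma^2$, with the key cancellation in \ref{propC} occurring exactly when both do (so that $L=4$ is locally constant and $\partial_R L=0$), and the integral bounds \ref{propG}--\ref{propI} obtained as supremum-times-support-length estimates combined with Rademacher's theorem. The only genuinely different local step is \ref{propF}, where your one-variable optimization of $r(\sigma^2-d^2)/(r+d)$ over $r\geq(\sigma+|d|)/2$ is a cleaner (and more airtight) derivation of the constant $4\sigma^2$ than the paper's split at $r=2\sigma$; incidentally, your bookkeeping of the factor $2$ in $\partial_R\widetilde{L}=-2(R-r)$ within \ref{propC} is more careful than the paper's displayed computation, while arriving at the same stated bound $\tfrac{2\sigma}{r}$.
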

\begin{proof}
First, we prove \ref{propA}, \ref{propB} and \ref{propC} simultaneously distinguishing four cases.
\begin{itemize}
\item[(1)] If $|R+r|\leq \sigma$ and $|R-r|\leq \sigma$ we have $L(R,r) = 4$ and $\widetilde{L}(R,r)=4Rr$ so that \ref{propA} is satisfied. For \ref{propB} we have $\widetilde{L}(R,r) \leq 2\sigma^2$, $\partial_R \widetilde{L}(R,r) = 4r \leq 4\sigma$ and $\partial_r \widetilde{L}(R,r) = 4R \leq 4\sigma$ because $0\leq r, R \leq \sigma$. Finally, \ref{propC} is clear because $\partial_{R} L(R,r) = 0$.
\item[(2)] If $|R+r|\geq \sigma$ and $|R-r|\geq \sigma$ we have $L(R,r)=0$ and the conclusion is obvious.
\item[(3)] If $|R+r|\geq \sigma$ and $|R-r|\leq \sigma$ we have $L(R,r) = \frac{\sigma^2 - (R-r)^2}{Rr}$. As $\sigma^2 \leq (R+r)^2$ we have
$$
L(R,r) = \frac{\sigma^2 - (R-r)^2}{Rr} \leq \frac{(R+r)^2 - (R-r)^2}{Rr} = 4
$$
so \ref{propA} is proved. To prove \ref{propB} we note that $\widetilde{L}(R,r)=\sigma^2 - (R-r)^2 \leq \sigma^2$. Then, 
$$
\partial_R \widetilde{L}(R,r) = - \partial_r \widetilde{L}(R,r) = -(R-r) 
$$
which is bounded by $\sigma$. For \ref{propC} we observe that
$$
\partial_R L(R,r) = \frac{-(R-r)Rr - (\sigma^2 - (R-r)^2)\,r}{R^2\,r^2} = - \frac{R-r}{Rr} - \frac{\sigma^2 - (R-r)^2}{R^2r}.
$$
For the first term we have $\left| \frac{R-r}{Rr} \right| \leq \frac{\sigma}{Rr}$ while the second is simply $-\frac{L(R,r)}{R}$.
\item[(4)] As $|R-r| \leq R+r = |R+r|$ there is no case $|R+r| < \sigma$ and $|R-r| > \sigma$.
\end{itemize}
Then, \ref{propD} follows from the second case while \ref{propE} is a consequence of \ref{propA} and \ref{propD}. Indeed, when $r \leq 2\sigma$ we can estimate $\frac{\widetilde{L}(R,r)}{R} = r \,L(R,r) \leq 8 \sigma$. On the other hand, when $r \geq 2\sigma$, we can assume $R \geq \sigma$ so that $\frac{\widetilde{L}(R,r)}{R} \leq \frac{2\sigma^2}{\sigma} = 2\sigma$. To prove \ref{propF} we proceed in the similar manner. If $r \leq 2\sigma$ we can estimate $r^2 \, L(R,r) \leq 4 \sigma^2$. Otherwise, $R \geq r-\sigma$ so 
$$
r^2 \, L(R,r) \leq \frac{r}{r - \sigma} \, \widetilde{L}(R,r) = \frac{1}{1 - \sigma/r}  \, \widetilde{L}(R,r) \leq \frac{1}{1 - 1/2}  \, \widetilde{L}(R,r) \leq 4\sigma^2.
$$

\noindent To see \ref{propG} we compute using \ref{propD} and \ref{propA}:
$$
\left|\int_{\R^+} L(R,r) \, \psi(R) \diff R\right| = \left|\int_{\R^+} L(R,r) \, \mathds{1}_{|R-r|\leq \sigma} \, \psi(R) \diff R\right| \leq 4 \|\psi \|_{\infty} \int_{\R^+}  \mathds{1}_{|R-r|\leq \sigma} \diff R = 8\sigma \|\psi \|_{\infty}
$$
as the measure of the set $|R-r|\leq \sigma$ equals $2\sigma$. Assertion \ref{propH} is proved similarly, using \ref{propE} instead of \ref{propA}. Finally, to see \ref{propI} we first observe that the map of interest is bounded because thanks to \ref{propB} and \ref{propD}
$$
\left|\int_{\R^+} \widetilde{L}(R,r) \, \psi(R) \diff R\right| = \left|\int_{\R^+} \widetilde{L}(R,r) \, \mathds{1}_{|R-r|\leq \sigma} \, \psi(R) \diff R\right| \leq 4\sigma^2 \|\psi \|_{\infty} \int_{\R^+}  \mathds{1}_{|R-r|\leq \sigma} \diff R = 8\sigma^3 \|\psi \|_{\infty}.
$$
Similarly, using \ref{propB} we prove that $\left|\int_{\R^+} \partial_r\widetilde{L}(R,r) \, \psi(R) \diff R\right| \leq 8\sigma^2 \|\psi \|_{\infty}$ and this concludes the proof of \ref{propI} thanks to Radamacher's Theorem cf. Lemma \ref{lem:prod_BL_bel} (D).
\end{proof}


\section{Measure solutions to radial equation}\label{sect:radial_sln_measure_equation}


\noindent The solution to the numerical scheme is represented as a generalised solution being a measure rather than just a function. This section is devoted to the formulation of radial equation \eqref{eq:transform_coordinates_INTRODUCTION} in the space of measures.\\

\noindent We introduce
$$
E_T = C(0,T; (\mathcal{M}^+_{w}(\R^+), \| \cdot \|_{BL^*,\,w})),
$$
to be the space of continuous curves indexed with time $t \in [0,T]$ and valued in $(\mathcal{M}^+_{w}(\R^+), \| \cdot \|_{BL^*,\,w})$. Here, $\mathcal{M}^+_{w}(\R^+)$ is the subspace of non-negative measures vanishing linearly at $r = 0$, cf. \eqref{def:weighted_measure_space} and $\| \cdot \|_{BL^*,w}$ is defined with \eqref{defeq:flatnorm_weighted}. Space $E_T$ is equipped with usual supremum norm
\begin{equation}\label{eq:norm_on_E}
\| \mu_{\bullet}\|_{E_T} = \sup_{t \in [0,T]} \| \mu_t \|_{BL^*,\,w}.
\end{equation}
Moreover, we write $\lambda$ for the Lebesgue measure on $\R^+$ and let $S(R) = 4\pi R^2$.
\begin{definition}[mild measure solution]\label{def:measure_solution}
We say that $ \mu_{\bullet} =
\left\{\mu_t\right\}_{t\in[0,T]} \in E_T $
is a mild measure solution to \eqref{eq:transform_coordinates_INTRODUCTION} with initial condition $\mu_0 \in \mathcal{M}^+_w(\R^+)$ if for all test functions $\psi \in BL(\R^+)$ we have
\begin{equation}\label{eq:duh_solution_concept_TF}
\begin{split}
&\int_{\R^+} \frac{\psi(R)}{R} \diff\mu_t(R) = \int_{\R^+} \frac{\psi(R)}{R} \, e^{-\int_0^t \int_{\R^+} {L}(R, r) \diff \mu_s(r) \diff s }\diff\mu_0(R) \,+\\ &\qquad \qquad + \int_0^t \int_{\R^+} \int_{\R^+} \frac{\psi(R)}{R}\,  e^{-\int_0^{t-s} \int_{\R^+} {L}(R,r) \diff \mu_u(r) \diff u } \,   L(R,r) \, S(R) \diff \mu_s(r) \diff \lambda(R) \diff s .
\end{split}
\end{equation}
\end{definition}
\begin{notation}
It is convenient to define for $\mu_{\bullet} \in E_T$, $R \in \R^+$ and $t \in [0,T]$ 
\begin{equation}\label{not:simplificationE}
\mathcal{E}[\mu_{\bullet}, R, t] := e^{-\int_0^t \int_{\R^+} \mathcal{L}(R,r) \diff \mu_s(r) \diff s}.
\end{equation}
\end{notation}
\noindent {\bf Motivation for Definition \ref{def:measure_solution}:} Suppose that $\mu_t$ is a measure given by a continuous density $p(r,t)$ so that $\mu_t(A) = \int_{A} p(r,t) \diff \mu_t(r)$. As \eqref{eq:duh_solution_concept_TF} holds for all $\psi \in BL(\R^+)$ we discover
$$
\frac{p(R,t)}{R} = e^{-\int_0^t \int_{\R^+} {L}(R, r) \, p(r,s) \diff s } \, \frac{p(R,0)}{R} + 
\int_0^t \int_{\R^+} \frac{1}{R}\,  e^{-\int_0^{t-s} \int_{\R^+} {L}(R,r) \, p(r,u) \diff u } \, L(R,r) \, S(R) \, p(r,s) \diff r \diff s.
$$
We multiply with $R > 0$ to get
$$
{p(R,t)} = e^{-\int_0^t \int_{\R^+} {L}(R, r) \, p(r,s) \diff s } \, {p(R,0)} + 
\int_0^t \int_{\R^+} e^{-\int_0^{t-s} \int_{\R^+} {L}(R,r) \, p(r,u) \diff u } \, L(R,r) \, S(R) \, p(r,s) \diff r \diff s.
$$
This is precisely variation-of-constants formula (or Duhamel formula) for \eqref{eq:transform_coordinates_INTRODUCTION}. Thus Definition \ref{def:measure_solution} provides a generalisation of the concept of classical solutions to \eqref{eq:transform_coordinates_INTRODUCTION}. The factor $\frac{1}{R}$ is designed to study solutions that decay at least linearly when $R \to 0$. \\

\noindent The main result of this section reads:
\begin{theorem}\label{thm:well-posed_flat_metric_radial}
Suppose that $\mu_0$ is such that $\left\|\frac{\mu_0}{r^2}\right\|_{TV}, \left\|\frac{\mu_0}{r}\right\|_{TV}, \left\|{\mu_0}\right\|_{TV} < \infty$. Then, there exists a unique non-negative mild measure solution to \eqref{eq:transform_coordinates_INTRODUCTION} in $E_T$ with initial condition $\mu_0$. It satisfies the bound 
\begin{equation}\label{eq:unif_bound_solutions}
\|\mu_t\|_{BL^*,w} \leq \|\mu_0\|_{BL^*,w} \, e^{C(\sigma)t}, \qquad \qquad \|\mu_t\|_{TV} \leq \|\mu_0\|_{TV} \, e^{C(\sigma)t},
\end{equation}
for some constant $C(\sigma)$ depending only on $\sigma$. Moreover, the solution is Lipschitz continuous in time
\begin{equation}\label{eq:continuity_in_time_measure_solution}
   \|\mu_{t_2} - \mu_{t_1} \|_{BL^*,w} \leq C_d \, |t_2 - t_1|.
\end{equation}
Furthermore, if $\mu_t^{(1)}$, $\mu_t^{(2)}$ are measure solutions with initial conditions $\mu_0^{(1)}$, $\mu_0^{(2)}$ satisfying assumptions above, we have
\begin{equation}\label{eq:important_continuity_estimate} 
\begin{split}
\left\| \mu_t^{(1)} - \mu_t^{(2)} \right\|_{BL^*,w}  \leq  C_d \, \left\|\frac{\mu_0^{(1)} - \mu_0^{(2)}}{r\, \minT(1,r)}\right\|_{BL^*}.
\end{split}
\end{equation}
The constant $C_d$ depends on data and $\left\|\frac{\mu_0}{r^2}\right\|_{TV}, \|\mu_0\|_{BL^*,w}, \left\|\frac{\nu_0}{r^2}\right\|_{TV}, \|\nu_0\|_{BL^*,w}$.
\end{theorem}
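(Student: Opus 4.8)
The plan is to obtain the solution as a fixed point of the Duhamel operator encoded in \eqref{eq:duh_solution_concept_TF}. For a curve $\nu_\bullet \in E_T$ I would define $\mathcal{T}[\nu_\bullet]_t$ to be the measure acting on a test function through the right-hand side of \eqref{eq:duh_solution_concept_TF}, with the exponential weights $\mathcal{E}[\nu_\bullet, R, \cdot]$ from \eqref{not:simplificationE} frozen along $\nu_\bullet$; a fixed point of $\mathcal{T}$ is exactly a mild measure solution. First I would verify that $\mathcal{T}$ maps $E_T$ into itself. Non-negativity of $\mathcal{T}[\nu_\bullet]_t$ is immediate because every ingredient (the exponentials, $L \ge 0$ by \ref{propA}, $S$, $\lambda$, $\nu_s$) is non-negative. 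The weight $1/R$ in the test function is tuned to the growth factor $S(R) = 4\pi R^2$ and to the singularity of $L$: against $\psi(R)/R$ the source density $L(R,r)\,S(R)$ becomes $4\pi\,\psi(R)\,R\,L(R,r) = 4\pi\,\psi(R)\,\widetilde L(R,r)/r$, which is controlled by the bounded quantities of Lemma~\ref{lem:prop_of_L}, while continuity in time of $t \mapsto \mathcal{T}[\nu_\bullet]_t$ in $\|\cdot\|_{BL^*,w}$ follows from dominated convergence.

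\textbf{A priori bounds.} Next I would prove \eqref{eq:unif_bound_solutions}. Testing \eqref{eq:duh_solution_concept_TF} with admissible $\psi$, bounding the exponentials by $1$, and estimating the source integral in $R$ by \ref{propG}/\ref{propH} together with the moment control \ref{propF}, \ref{propE} produces a right-hand side that is linear in the running mass of $\mu_\bullet$; a Grönwall inequality then yields both the weighted estimate and the total-variation estimate with rate $C(\sigma)$. In the same step I would propagate the auxiliary weighted bounds on $\|\mu_t/r\|_{TV}$ and $\|\mu_t/r^2\|_{TV}$ inherited from the hypotheses on $\mu_0$, since these quantities are exactly what controls the singular factors below. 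These bounds show that a suitable closed ball of $E_T$ is invariant under $\mathcal{T}$, as needed for the contraction.

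\textbf{Contraction, existence, uniqueness, time regularity.} The only nonlinearity is the exponential, so the heart of the contraction is the elementary estimate $|\mathcal{E}[\nu^{(1)}_\bullet,R,t]-\mathcal{E}[\nu^{(2)}_\bullet,R,t]| \le \int_0^t \bigl|\int_{\R^+} L(R,r)\,\diff(\nu^{(1)}_s-\nu^{(2)}_s)(r)\bigr|\,\diff s$, valid because $x \mapsto e^{-x}$ is $1$-Lipschitz on $[0,\infty)$. Writing $L(R,r) = \tfrac1r\,\widetilde L(R,r)/R$ and using that $r \mapsto \widetilde L(R,r)/R$ is bounded-Lipschitz (with a constant that degenerates like $1/R$ near the origin, by \ref{propB} and \ref{propE}), the inner integral is estimated by $\|\nu^{(1)}_s-\nu^{(2)}_s\|_{BL^*,w}$. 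Combined with the a priori mass bounds this gives a prefactor $C_d\,T$ multiplying $\|\nu^{(1)}_\bullet - \nu^{(2)}_\bullet\|_{E_T}$, so $\mathcal{T}$ is a contraction for $T$ small; the local solution is then continued to any horizon by iteration, the step length being uniform thanks to \eqref{eq:unif_bound_solutions}. Lipschitz-in-time continuity \eqref{eq:continuity_in_time_measure_solution} follows by reading off from \eqref{eq:duh_solution_concept_TF} that both the change in the exponential prefactor and the extra time-slab $\int_{t_1}^{t_2}$ in the source are $O(|t_2-t_1|)$ by the same estimates.

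\textbf{Continuous dependence --- the main obstacle.} The estimate \eqref{eq:important_continuity_estimate} is where I expect the real difficulty, and it dictates the weight $r\,\minT(1,r)$ on the right. Subtracting the two Duhamel identities and testing against $\psi/R$ splits the difference into an initial-data term, a difference-of-exponentials term, and a difference-of-source-measures term. The latter two are folded into a Grönwall loop as in the contraction step, generating $C_d$ and its dependence on $\|\mu_0/r^2\|_{TV}$ and $\|\mu_0\|_{BL^*,w}$ (the $1/r^2$ weight is forced precisely by the $1/R$ degeneracy of the Lipschitz constants noted above, integrated once more against $\diff\mu_0$). The genuinely delicate point is the initial-data term $\int_{\R^+}\frac{\psi(R)}{R}\,\mathcal{E}[\mu^{(1)}_\bullet,R,t]\,\diff(\mu^{(1)}_0-\mu^{(2)}_0)(R)$, which must be bounded by the flat distance of the initial data in the weight $r\,\minT(1,r)$. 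Writing $\frac{\psi(R)}{R}\,\mathcal{E} = \frac{1}{R\,\minT(1,R)}\bigl(\minT(1,R)\,\psi(R)\,\mathcal{E}\bigr)$, I must show the bracketed function is bounded-Lipschitz. Here $\mathcal{E}$ itself is not uniformly Lipschitz in $R$: differentiating and using the singular bound \ref{propC}, namely $|\partial_R L(R,r)| \le \frac1R\bigl(\frac{2\sigma}{r}+L(R,r)\bigr)$, gives $|\partial_R\mathcal{E}| \lesssim \frac1R\,t\,\bigl(\sigma\,\sup_s\|\mu_s/r\|_{TV}+\sup_s\|\mu_s\|_{TV}\bigr)$, a genuine $1/R$ blow-up at the origin. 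This blow-up is annihilated exactly by the factor $\minT(1,R)$, which equals $R$ near $0$, so that $\minT(1,R)\,\mathcal{E}$ is bounded-Lipschitz and the product with $\psi$ is controlled via Lemma~\ref{lem:prod_BL_bel}(C). Matching the $\tfrac1{R}$ singularity of $\partial_R\mathcal{E}$ against the truncated weight $\minT(1,R)$ is the crux of the whole theorem, and closing this estimate is the step I expect to require the most care.
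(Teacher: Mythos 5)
Your proposal follows essentially the same route as the paper: a Banach fixed-point argument for the Duhamel operator on $E_T$, a priori bounds via Gr\"onwall using the kernel estimates of Lemma~\ref{lem:prop_of_L}, contraction through the $1$-Lipschitz property of $e^{-x}$ combined with the $BL$-regularity of $\widetilde L$, and the identification of the factor $\minT(1,R)$ as the device that cancels the $1/R$ blow-up of $\partial_R\mathcal{E}$ coming from \ref{propC} --- which is precisely the content of the paper's Lemmas~\ref{lem:E_basic_estimates}, \ref{lem:aprioriestimateinweightedspace}, \ref{lem:continuity_duh_formula} and \ref{lem:cont_duh_time}. The only cosmetic difference is that you propose to propagate $\|\mu_t/r^2\|_{TV}$ along the flow, whereas the paper only ever needs the $1/r^2$-moment of the initial datum; this is harmless.
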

\begin{remark}\label{rem:ass_initial_condition_theorem}
Note that we have in mind the situation when $\mu_0$ is a measure with density $p(r) = 4\pi r^2 \, n_0(0,0,r)$ where $n_0 \in L^{\infty}(\R^3)$ and has compact support cf. Theorem \ref{thm1:radial_change} so that $\mu_0$, $\frac{\mu_0}{r}$ and $\frac{\mu_0}{r^2}$ are bounded in total variation norm.
\end{remark}
\noindent The proof of Theorem \ref{thm:well-posed_flat_metric_radial} relies on continuity properties of (RHS) of \eqref{eq:duh_solution_concept_TF}. 
To this end, given $\mu_{\bullet}, \nu_{\bullet} \in E_T$, two measures $\gamma_1, \gamma_2 \in \mathcal{M}^+_w(\R^+)$ we define two families of measures with
\begin{equation}\label{eq:duh_solution_concept_tildemu}
\begin{split}
&\int_{\R^+} \frac{\psi(R)}{R} \diff \widetilde{\mu_t}(R) = \int_{\R^+} \frac{\psi(R)}{R}\, \mathcal{E}[\mu_{\bullet}, R, t] \, \diff \gamma_1(R)\, + \\
& \qquad \qquad \int_0^t  \int_{\R^+} \int_{\R^+} \frac{\psi(R)}{R}\, \mathcal{E}[\mu_{\bullet}, R, t-s] \,  {L}(R,r) \, S(R)\diff \mu_s(r) \diff R \diff s =: X_1 + Y_1,
\end{split}
\end{equation} 
\begin{equation}\label{eq:duh_solution_concept_tildenu}
\begin{split}
&\int_{\R^+} \frac{\psi(R)}{R} \diff \widetilde{\nu_t}(R) = \int_{\R^+} \frac{\psi(R)}{R}\, \mathcal{E}[\nu_{\bullet}, R, t] \, \diff \gamma_2(R)\, + \\
& \qquad \qquad  \int_0^t \int_{\R^+} \int_{\R^+} \frac{\psi(R)}{R}\,  \mathcal{E}[\nu_{\bullet}, R, t-s] \,   {L}(R,r) \, S(R) \diff \nu_s(r) \diff R \diff s =: X_2 + Y_2.
\end{split}
\end{equation}

\begin{notation}[Constant $C_d$]{\label{notation:Cdatac}}
For the sake of simplicity, we introduce constant $C_d$ which may differ from line to line and is allowed to depend continuously on data, i.e.
$$
L,\,   \sigma, \, T,\, \left\|\frac{\gamma_1}{r^2} \right\|_{TV},\, \left\|\frac{\gamma_2}{r^2} \right\|_{TV},\, \|\mu_{\bullet}\|_{E_T},\, \|\nu_{\bullet}\|_{E_T}.
$$
\end{notation}

\begin{lemma}[continuity of $\mathcal{E}$]\label{lem:E_basic_estimates}
Under notation above, $\mathcal{E}$ enjoys the following properties:
\begin{enumerate}[label=(E\arabic*)]
    \item \label{E1}boundedness: $0 \leq \mathcal{E}[\mu_{\bullet}, R, t] \leq 1$,
    \item \label{E2}continuity with respect to measure argument:
    \begin{align*}
    |\mathcal{E}[\mu_{\bullet}, R, t] - \mathcal{E}[\nu_{\bullet}, R, t]| \leq \, \frac{C(\sigma)}{R} \, \int_0^t \| \mu_s - \nu_s \|_{BL^*,w} \diff s
    \end{align*}
    \item \label{E3} continuity with respect to time: 
    $$
    \sup_{R \in \R^+} \left|\mathcal{E}[\mu_{\bullet}, R, t] - \mathcal{E}[\mu_{\bullet}, R, s]\right| \leq \frac{C_d}{R} \,|t-s|, 
    $$
    \item \label{E4} continuity with respect to $R$: the map $\R \ni R \mapsto \minT(R,1)\, \mathcal{E}[\mu_{\bullet}, R, t]$ is in $BL(\R^+)$ with norm bounded by $C(\sigma, \|\mu_{\bullet}\|_{E_T})$.
\end{enumerate}
\end{lemma}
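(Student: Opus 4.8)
The common engine for \ref{E1}--\ref{E3} is the elementary bound $|e^{-a}-e^{-b}|\le|a-b|$, valid for $a,b\ge 0$, which reduces each statement to controlling the exponent $\Phi[\mu_{\bullet},R,t]:=\int_0^t\int_{\R^+}L(R,r)\diff\mu_s(r)\diff s$ and its increments. Property \ref{E1} is then immediate: since $L\ge 0$ by \ref{propA} and each $\mu_s$ is non-negative, $\Phi[\mu_{\bullet},R,t]\ge 0$, whence $0<\mathcal{E}=e^{-\Phi}\le 1$.

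For \ref{E2} I would apply the Lipschitz bound on $e^{-x}$ and then estimate, for fixed $R$, the inner integral $\int_{\R^+}L(R,r)\diff(\mu_s-\nu_s)(r)$. The idea is to factor the weight $1/r$ of the norm \eqref{defeq:flatnorm_weighted} out of $L$: writing $L(R,r)=\tfrac1R\,\tfrac{\widetilde L(R,r)}{r}$ and recalling that $\widetilde L$ is symmetric in its two arguments, property \ref{propB} gives that $r\mapsto\widetilde L(R,r)$ lies in $BL(\R^+)$ with norm at most $2\sigma^2+4\sigma$, uniformly in $R$. The weighted analogue of Lemma \ref{lem:prod_BL_bel}(A) — which follows verbatim from the definition \eqref{defeq:flatnorm_weighted} — then yields
$$
\left|\int_{\R^+}L(R,r)\diff(\mu_s-\nu_s)(r)\right|=\frac1R\left|\int_{\R^+}\frac{\widetilde L(R,r)}{r}\diff(\mu_s-\nu_s)(r)\right|\le\frac{C(\sigma)}{R}\,\|\mu_s-\nu_s\|_{BL^*,w},
$$
and integrating in $s\in[0,t]$ gives \ref{E2}. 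Statement \ref{E3} is the same computation with $\mu_s$ in place of $\mu_s-\nu_s$ and the increment $\int_s^t$ in time; here I bound $\|\mu_u\|_{BL^*,w}\le\|\mu_{\bullet}\|_{E_T}$, so that $|\mathcal{E}[\mu_{\bullet},R,t]-\mathcal{E}[\mu_{\bullet},R,s]|\le\frac{C_d}{R}|t-s|$ for each fixed $R$.

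The real work is \ref{E4}, and this is where I expect the main obstacle. Set $g(R):=\minT(R,1)\,\mathcal{E}[\mu_{\bullet},R,t]$; the sup bound $\|g\|_\infty\le 1$ is trivial from \ref{E1}. For the Lipschitz seminorm I would invoke Rademacher (Lemma \ref{lem:prod_BL_bel}(D)) and bound $\|g'\|_\infty$. Differentiating,
$$
g'(R)=\mathds{1}_{R<1}\,\mathcal{E}[\mu_{\bullet},R,t]-\minT(R,1)\,\mathcal{E}[\mu_{\bullet},R,t]\int_0^t\int_{\R^+}\partial_R L(R,r)\diff\mu_s(r)\diff s,
$$
and the first term is at most $1$. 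The difficulty is that $\partial_R L$ is singular as $R\to 0$: property \ref{propC} only gives $|\partial_R L(R,r)|\le\frac1R\bigl(\frac{2\sigma}{r}+L(R,r)\bigr)$. The point of the weight $\minT(R,1)$ is precisely to absorb this singularity, since $\minT(R,1)/R\le 1$; after this cancellation I am left with $\int_{\R^+}\bigl(\frac{2\sigma}{r}+L(R,r)\bigr)\diff\mu_s(r)$, which I would control without ever invoking the (uncontrolled) total-variation norm. The first summand is $2\sigma\|\mu_s/r\|_{TV}=2\sigma\|\mu_s\|_{BL^*,w}$, while for the second I use $L(R,r)\le 8\sigma/r$ (a consequence of \ref{propE}) to get $\int_{\R^+} L(R,r)\diff\mu_s(r)\le 8\sigma\|\mu_s\|_{BL^*,w}$. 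Both are bounded by $C(\sigma)\|\mu_{\bullet}\|_{E_T}$; integrating over $s\in[0,t]\subset[0,T]$ yields $\|g'\|_\infty\le C(\sigma,T,\|\mu_{\bullet}\|_{E_T})$, and hence the claimed $BL$-bound.

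In short, the whole lemma is driven by the $1/R$-type estimates \ref{propB}, \ref{propC}, \ref{propE} of Lemma \ref{lem:prop_of_L} together with the weighted analogue of Lemma \ref{lem:prod_BL_bel}(A). The one genuinely delicate point is the near-origin behaviour in \ref{E4}: the $\frac1R$ blow-up of the Lipschitz constant of $L$ would destroy Lipschitz continuity of $\mathcal{E}$ at $R=0$, and it is exactly the factor $\minT(R,1)$ — which behaves like $R$ near the origin — that restores it, mirroring the role played by the weight throughout the paper.
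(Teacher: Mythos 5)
Your proposal is correct and follows essentially the same route as the paper: the Lipschitz bound on $e^{-x}$ combined with the factorization $L(R,r)=\tfrac1R\,\tfrac{\widetilde L(R,r)}{r}$ and property \ref{propB} for \ref{E1}--\ref{E3}, and Rademacher plus \ref{propC} with the cancellation $\minT(R,1)/R\le 1$ for \ref{E4}. The only (immaterial) divergence is in the last term of \ref{E4}, where the paper observes that $\minT(R,1)\,\mathcal{E}\int_0^t\int L\,\diff\mu_s\diff s$ is of the form $x e^{-x}$ up to the bounded factor $\minT(R,1)/R$ and hence bounded by an absolute constant, while you instead use the pointwise bound $L(R,r)\le 8\sigma/r$ from \ref{propE} to control it by $C(\sigma)T\|\mu_{\bullet}\|_{E_T}$; both yield the claimed estimate.
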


\begin{proof}
Assertion \ref{E1} is obvious. For \ref{E2} we compute:
\begin{multline*}
 \left|e^{-\int_0^t \int_{\R^+} {L}(R,r) \diff \mu_s(r) \diff s} - e^{-\int_0^t \int_{\R^+} {L}(R,r) \diff \nu_s(r) \diff s} \right| 
 \leq \left| \int_0^t \int_{\R^+} {L}(R,r) \diff(\mu_s - \nu_s)(r) \diff s \right|  \\
 \leq \frac{1}{R}\left| \int_0^t \int_{\R^+} \frac{\widetilde{L}(R,r)}{r} \diff(\mu_s - \nu_s)(r) \diff s \right| \leq \frac{4\sigma^2 + 4\sigma}{R} \int_0^t \|\mu_s - \nu_s\|_{BL^*,w} \diff s,
\end{multline*}
where we used that the function $e^{x}$ is 1-Lipschitz for $x \leq 0$ and that $\widetilde{L}(R,r)$ is jointly in $BL(\R^+)$ with constant $4\sigma^2 + 4\sigma$ cf. Lemma \ref{lem:prop_of_L} \ref{propB}. To verify \ref{E3}, we compute similarly
\begin{multline*}
\left|e^{-\int_0^t \int_{\R^+} {L}(R,r) \diff \mu_u(r) \diff u}  - e^{-\int_0^s \int_{\R^+} {L}(R,r) \diff \mu_u(r) \diff u}  \right| \leq \int_s^t \int_{\R^+} \frac{\widetilde{L}(R,r)}{R\,r} \diff \mu_u(r) \diff u \leq \\
\leq \frac{4\sigma^2 + 4\sigma}{R} \, \|\mu_{\bullet}\|_{E_T} \, |t-s|.
\end{multline*}
To see \ref{E4}, we first observe that the map is bounded thanks to \ref{E1}. To prove Lipschitz continuity, we use Radamacher's theorem cf. Lemma \ref{lem:prod_BL_bel} (D). We observe that thanks to the product rule, the derivative of this map with respect to $R$ equals
\begin{align*}
\mathds{1}_{R \leq 1} \, e^{-\int_0^t \int_{\R^+} {L}(R,r) \diff \mu_s(r) \diff s} + \mbox{min}(R,1)\, e^{-\int_0^t \int_{\R^+} {L}(R,r) \diff \mu_s(r) \diff s}\, \int_0^t \int_{\R^+} \partial_R{L}(R,r) \diff \mu_s(r) \diff u.
\end{align*}
The first term is clearly bounded with 1 while for the second we estimate using Lemma \ref{lem:prop_of_L} \ref{propC}:
\begin{align*}
&\left|\mbox{min}(R,1)\, e^{-\int_0^t \int_{\R^+} {L}(R,r) \diff \mu_s(r) \diff s}\, \int_0^t \int_{\R^+} \partial_R{L}(R,r) \diff \mu_s(r) \diff s \right| \leq \\
 & \qquad \qquad \qquad \leq \frac{\mbox{min}(R,1)}{R}\, e^{-\int_0^t \int_{\R^+} {L}(R,r) \diff \mu_s(r) \diff s}\, \int_0^t \int_{\R^+} \frac{2\sigma}{r}  \diff \mu_s(r) \diff s \, + \\
 & \qquad \qquad \qquad \phantom{\leq} ~~ + \frac{\mbox{min}(R,1)}{R}\, e^{-\int_0^t \int_{\R^+} {L}(R,r) \diff \mu_s(r) \diff s}\, \int_0^t \int_{\R^+} {L(R,r)} \diff \mu_s(r) \diff s := X + Y.
\end{align*}
Term $X$ is bounded because $\frac{\mbox{min}(R,1)}{R} \leq 1$ and $\int_{\R^+} \frac{2\sigma}{r}  \diff \mu_s(r) \leq 2\sigma \|\mu_{\bullet} \|_{E_T}$. Term $Y$ is bounded because the function $x \, e^{-x}$ is bounded for $x \geq 0$.
\end{proof}
\begin{lemma}[a priori estimate]\label{lem:aprioriestimateinweightedspace}
Let $\widetilde{\mu_t}$ be defined with \eqref{eq:duh_solution_concept_tildemu}. Then, there is a constant $C(\sigma)$ depending only on $\sigma$ such that
$$
 \left\| \widetilde{\mu_t}\right\|_{BL^*,w} \leq  \left\| \gamma_1 \right\|_{BL^*,w} +  C(\sigma) \int_0^t \left\| {\mu_s}\right\|_{BL^*,w} \diff s.
$$
Similarly,
$$
 \left\| \widetilde{\mu_t}\right\|_{TV} \leq  \left\| \gamma_1 \right\|_{TV} +  C(\sigma) \int_0^t \left\| {\mu_s}\right\|_{TV} \diff s.
$$
\end{lemma}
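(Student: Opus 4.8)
The plan is to exploit the fact that, by its very construction in \eqref{eq:duh_solution_concept_tildemu}, the measure $\widetilde{\mu_t}$ is non-negative: both $X_1$ and $Y_1$ integrate products of non-negative quantities, since $\mathcal{E}[\mu_{\bullet},R,t]\geq 0$ by \ref{E1}, $L(R,r)\geq 0$ by \ref{propA}, $S(R)\geq 0$, and $\mu_s\in\mathcal{M}^+_w(\R^+)$. Positivity is what lets me replace the supremum over test functions by a single explicit choice. Since $\widetilde{\mu_t}\geq 0$, the measure $\widetilde{\mu_t}/r$ is non-negative, so by Lemma~\ref{lem:prod_BL_bel}~(B) its flat norm equals its total variation, and testing \eqref{eq:duh_solution_concept_tildemu} with the admissible constant $\psi\equiv 1$ (which has $\|\psi\|_{BL}\leq 1$) gives
$$
\left\|\widetilde{\mu_t}\right\|_{BL^*,w}=\left\|\frac{\widetilde{\mu_t}}{r}\right\|_{TV}=\int_{\R^+}\frac{1}{R}\diff\widetilde{\mu_t}(R)=X_1\big|_{\psi\equiv 1}+Y_1\big|_{\psi\equiv 1}.
$$

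For the weighted estimate I bound the two terms separately. Using $\mathcal{E}[\mu_{\bullet},R,t]\leq 1$ from \ref{E1} and $\gamma_1\geq 0$ gives $X_1|_{\psi\equiv 1}=\int_{\R^+}R^{-1}\mathcal{E}[\mu_{\bullet},R,t]\diff\gamma_1\leq\int_{\R^+}R^{-1}\diff\gamma_1=\|\gamma_1\|_{BL^*,w}$. For $Y_1$ I insert $S(R)=4\pi R^2$ and use the algebraic identity $R\,L(R,r)=\widetilde{L}(R,r)/r$, so that after $\mathcal{E}\leq 1$ and Fubini,
$$
Y_1\big|_{\psi\equiv 1}\leq 4\pi\int_0^t\int_{\R^+}\frac{1}{r}\left(\int_{\R^+}\widetilde{L}(R,r)\diff R\right)\diff\mu_s(r)\diff s .
$$
The inner integral is bounded by $C(\sigma)$ via Lemma~\ref{lem:prop_of_L}~\ref{propI} with $\psi\equiv 1$ (equivalently, directly from \ref{propB} and \ref{propD}), which leaves $\int_{\R^+}r^{-1}\diff\mu_s(r)=\|\mu_s\|_{BL^*,w}$; combining the two bounds yields the first inequality.

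For the total variation estimate the constant test function is unavailable, since recovering $\|\widetilde{\mu_t}\|_{TV}=\widetilde{\mu_t}(\R^+)$ would require $\psi(R)=R\notin BL(\R^+)$. I circumvent this by testing with the truncations $\psi_n(R)=\min(R,n)\in BL(\R^+)$ and letting $n\to\infty$: by monotone convergence $\int_{\R^+}\frac{\psi_n(R)}{R}\diff\widetilde{\mu_t}=\int_{\R^+}\min(1,n/R)\diff\widetilde{\mu_t}\uparrow\|\widetilde{\mu_t}\|_{TV}$. Since $\psi_n(R)/R\leq 1$, the bound $\mathcal{E}\leq 1$ controls the $X_1$-term by $\int_{\R^+}\diff\gamma_1=\|\gamma_1\|_{TV}$ and the $Y_1$-term by $\int_0^t\int_{\R^+}\big(\int_{\R^+}S(R)\,L(R,r)\diff R\big)\diff\mu_s(r)\diff s$, uniformly in $n$.

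The crux is the uniform-in-$r$ bound $\int_{\R^+}S(R)\,L(R,r)\diff R=4\pi\int_{\R^+}R^2L(R,r)\diff R\leq C(\sigma)$, the one place where the singularity of $L$ as $r\to 0$ must be tamed. I will establish it exactly as \ref{propE} and \ref{propF} are proved, by splitting on the size of $r$ and using that $L$ is supported on $|R-r|\leq\sigma$ by \ref{propD}, so $R\leq r+\sigma$ on the support. For $r\leq 2\sigma$ one has $R\leq 3\sigma$ and $L\leq 4$ by \ref{propA}, giving $\int R^2L\diff R\leq 72\sigma^3$; for $r>2\sigma$ one uses $r^2L(R,r)\leq 4\sigma^2$ from \ref{propF} with $R\leq\tfrac{3}{2}r$ to obtain $R^2L\leq 9\sigma^2$, hence $\int R^2L\diff R\leq 18\sigma^3$. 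Either way the integral is $\leq C(\sigma)$ independently of $r$, so the $Y_1$-term is bounded by $C(\sigma)\int_0^t\|\mu_s\|_{TV}\diff s$, and passing to the limit $n\to\infty$ closes the second inequality. The main obstacle is precisely this last step: the naive estimate using only $\widetilde{L}\leq 2\sigma^2$ yields $\int R^2L\diff R\lesssim\sigma^4/r$, which diverges as $r\to 0$, and it is the two-regime splitting (mirroring the proofs of \ref{propE}--\ref{propF}) that removes the apparent singularity.
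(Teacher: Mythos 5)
Your proposal is correct and follows essentially the same route as the paper: bound the $X_1$ term via $\mathcal{E}\le 1$, rewrite $\frac{1}{R}L(R,r)S(R)=4\pi\widetilde{L}(R,r)/r$ and invoke the kernel bounds of Lemma~\ref{lem:prop_of_L}, then prove the total variation estimate with a truncated test function and monotone convergence. The only cosmetic differences are that you use positivity of $\widetilde{\mu_t}$ to test with $\psi\equiv 1$ where the paper simply takes the supremum over $\|\psi\|_{BL}\le 1$, and that your uniform bound on $\int_{\R^+}S(R)L(R,r)\,\diff R$ splits on $r\lessgtr 2\sigma$ (via \ref{propF}) while the paper splits on $R\lessgtr 2\sigma$ — both yield the same $C(\sigma)$.
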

\begin{proof}
We use \eqref{eq:duh_solution_concept_tildemu} which we rewrite here for convenience:
\begin{equation}\label{eq:semigroup_formulation_for_aprioribounds}
    \begin{split}
&\int_{\R^+} \frac{\psi(R)}{R} \diff \widetilde{\mu_t}(R) = \int_{\R^+} \frac{\psi(R)}{R}\, \mathcal{E}[\mu_{\bullet}, R, t] \, \diff \gamma_1(R)\, + \\
& \qquad \qquad \qquad + \int_0^t  \int_{\R^+} \int_{\R^+} \frac{\psi(R)}{R}\, \mathcal{E}[\mu_{\bullet}, R, t-s] \,  {L}(R,r) \, S(R)\diff \mu_s(r) \diff R \diff s.
\end{split}
\end{equation} 
As $\mathcal{E}[\mu_{\bullet}, R, t] \in [0,1]$, the first term is bounded with $\| \psi \|_{\infty} \, \left\| \frac{\gamma_1}{R} \right\|_{TV} = \| \psi \|_{\infty} \, \left\| \gamma_1 \right\|_{BL^*,w}$. In the second term, we write $\frac{\psi(R)}{R}\,{L}(R,r) \, S(R) = 4\pi \, \psi(R) \, \frac{\widetilde{L}(R,r)}{r}$. Thanks to Lemma \ref{lem:prop_of_L} \ref{propI}, the integral $\int_{\R^+} \psi(R) \, \mathcal{E}[\mu_{\bullet}, R, t-s] \, \widetilde{L}(R,r) \diff R \leq \|\psi\|_{\infty} \, C(\sigma)$. Hence,
\begin{multline*}
\int_0^t  \int_{\R^+} \int_{\R^+} \frac{\psi(R)}{R}\, \mathcal{E}[\mu_{\bullet}, R, t-s] \,  {L}(R,r) \, S(R)\diff \mu_s(r) \diff R \diff s \leq \\ \leq  C(\sigma) \int_0^t  \int_{\R^+} \frac{1}{r} \diff \mu_s(r) \diff s = C(\sigma) \int_0^t \left\| {\mu_s}\right\|_{BL^*,w} \diff s.
\end{multline*}
Taking supremum over all $\psi$ such that $\|\psi\|_{BL} \leq 1$ we conclude the proof of the first estimate. For the second, we apply \eqref{eq:semigroup_formulation_for_aprioribounds} with $\psi(R) := \psi_n(R)$ defined as 
$$
\psi_n(R) =
\begin{cases}
 R &\mbox{ if } 0 \leq R \leq n,\\
 {n\, (n+1 - R)} &\mbox{ if } n \leq R \leq n+1, \\ 
 0 &\mbox{ if } R \geq n.
\end{cases}
$$
Note that $\frac{\psi_n(R)}{R} \in [0,1]$ and $\frac{\psi_n(R)}{R} \to 1$ monotonically as $n \to \infty$. As $\gamma_1$ is non-negative, we easily deduce $\left|\int_{\R^+} \frac{\psi_n(R)}{R}\, \mathcal{E}[\mu_{\bullet}, R, t] \, \diff \gamma_1(R)\right| \leq \| \gamma_1 \|_{TV} = \| \gamma_1 \|_{BL^*}$. For the second term we first estimate
\begin{multline*}
\left|\int_0^t  \int_{\R^+} \int_{\R^+} \frac{\psi_n(R)}{R}\, \mathcal{E}[\mu_{\bullet}, R, t-s] \,  {L}(R,r) \, S(R)\diff \mu_s(r) \diff R \diff s\right| \leq \\ \leq \int_0^t  \int_{\R^+} \int_{\R^+}  {L}(R,r) \, S(R)\diff \mu_s(r) \diff R \diff s.
\end{multline*}
Then, we split the integral for two cases: $R \geq 2\sigma$, $R \leq 2 \sigma$ and denote the resulting integrals with $I_1$ and $I_2$ respectively. If $R \geq 2\sigma$, we use the support of $L$ cf. Lemma \ref{lem:prop_of_L} (P4)  to observe that $r \geq R - \sigma \geq \sigma$. Hence, $L(R,r) = \frac{\widetilde{L}(R,r)}{R\,r} \leq \frac{\widetilde{L}(R,r)}{R\,(R-\sigma)}$ and we have
$$
L(R,r) \, S(R) = 4\pi \, \widetilde{L}(R,r) \, \frac{R^2}{R\,(R-\sigma)} \leq 8\pi \, \widetilde{L}(R,r) \mbox{ for } R \geq 2\sigma. 
$$
As $\int_{\R^+} \widetilde{L}(R,r) \diff R \leq C(\sigma)$, we deduce $I_1 \leq C(\sigma) \int_0^t \| \mu_s\|_{TV} \diff s$. The case $R \leq \sigma$ corresponding to the term $I_2$ is even easier because we can estimate $S(R) \leq 4\pi \sigma^2$ and $\int_{\R^+} {L}(R,r) \diff R \leq C(\sigma)$. It follows that
$$
\int_{\R^+} \frac{\psi_n(R)}{R} \diff \widetilde{\mu_t}(R) \leq \|\gamma_1\|_{TV} + C(\sigma) \int_0^t \| \mu_s\|_{TV} \diff s.
$$
Application of monotone convergence theorem concludes the proof.
\end{proof}
\begin{lemma}[continuity of Duhamel's formula wrt parameters]\label{lem:continuity_duh_formula}
Let $\widetilde{\mu_t}$ and $\widetilde{\nu_t}$ be defined with \eqref{eq:duh_solution_concept_tildemu} and \eqref{eq:duh_solution_concept_tildenu} respectively. Then,
\begin{equation}\label{eq:lem:continuity_duh_formula}
\begin{split}
\| \widetilde{\mu_t} - \widetilde{\nu_t} \|_{BL^*,w}  \leq C_d \,  \left\|\frac{\gamma_1 - \gamma_2}{r\, \minT(1,r)} \right\|_{BL^*}  + C_d \, \int_0^t \| \mu_s - \nu_s \|_{BL^*,w} \diff s
\end{split}
\end{equation}
\end{lemma}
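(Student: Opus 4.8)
The plan is to test $\widetilde{\mu_t}-\widetilde{\nu_t}$ against an arbitrary $\psi\in BL(\R^+)$ with $\|\psi\|_{BL}\le 1$, subtract the defining identities \eqref{eq:duh_solution_concept_tildemu} and \eqref{eq:duh_solution_concept_tildenu}, and organise the result as $(X_1-X_2)+(Y_1-Y_2)$. In each block I would telescope so as to isolate (i) the difference carried by the semigroup factor $\mathcal{E}$, to be controlled by the continuity estimate \ref{E2} of Lemma~\ref{lem:E_basic_estimates}, and (ii) the difference carried by the data $\gamma_1-\gamma_2$ or the curves $\mu_s-\nu_s$. Taking the supremum over $\psi$ at the end produces $\|\widetilde{\mu_t}-\widetilde{\nu_t}\|_{BL^*,w}$, and all numerical constants ($4\pi$, powers of $\sigma$, factors of $T$) are absorbed into $C_d$ in the sense of Notation~\ref{notation:Cdatac}.

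For the block $X_1-X_2$ I split it as $\int_{\R^+}\frac{\psi(R)}{R}\,\mathcal{E}[\mu_{\bullet},R,t]\,\diff(\gamma_1-\gamma_2)(R)+\int_{\R^+}\frac{\psi(R)}{R}\big(\mathcal{E}[\mu_{\bullet},R,t]-\mathcal{E}[\nu_{\bullet},R,t]\big)\diff\gamma_2(R)$. The second integral is routine: by \ref{E2} the bracket is at most $\frac{C(\sigma)}{R}\int_0^t\|\mu_s-\nu_s\|_{BL^*,w}\diff s$, so after the factor $\frac{\psi(R)}{R}$ and integration against $\gamma_2$ one collects exactly $\big\|\frac{\gamma_2}{r^2}\big\|_{TV}$ (part of $C_d$), leaving $C_d\int_0^t\|\mu_s-\nu_s\|_{BL^*,w}\diff s$. \textbf{The main obstacle is the first integral}: against $\gamma_1-\gamma_2$ only a weighted flat norm is available, yet $\frac{\psi(R)}{R}\mathcal{E}$ is not globally Lipschitz near $R=0$. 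The device — and precisely what forces the weight $r\,\minT(1,r)$ into the statement — is to write $\frac{\psi(R)}{R}\mathcal{E}[\mu_{\bullet},R,t]=\frac{\phi(R)}{R\,\minT(1,R)}$ with $\phi(R):=\psi(R)\,\minT(1,R)\,\mathcal{E}[\mu_{\bullet},R,t]$. By \ref{E4} the factor $\minT(1,R)\,\mathcal{E}[\mu_{\bullet},R,t]$ lies in $BL(\R^+)$ with norm $\le C_d$, so the product rule (Lemma~\ref{lem:prod_BL_bel}(C)) gives $\|\phi\|_{BL}\le 2\|\psi\|_{BL}\,C_d\le C_d$; the definition \eqref{defeq:flatnorm_weighted} of the weighted norm then bounds the integral by $C_d\big\|\frac{\gamma_1-\gamma_2}{r\,\minT(1,r)}\big\|_{BL^*}$. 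The measure $\frac{\gamma_1-\gamma_2}{r\,\minT(1,r)}$ is of finite total variation since $\frac{1}{r\minT(1,r)}\le\frac{1}{r^2}+\frac1r$ and $\gamma_1,\gamma_2\in\mathcal{M}^+_w(\R^+)$ have finite $\big\|\frac{\cdot}{r^2}\big\|_{TV}$.

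For the block $Y_1-Y_2$ I first simplify the kernel, using $L(R,r)=\widetilde{L}(R,r)/(Rr)$ and $S(R)=4\pi R^2$, so that $\frac{\psi(R)}{R}L(R,r)S(R)=4\pi\,\psi(R)\,\frac{\widetilde{L}(R,r)}{r}$, and I introduce $G_{\mu}(r,s):=\int_{\R^+}\psi(R)\,\mathcal{E}[\mu_{\bullet},R,t-s]\,\widetilde{L}(R,r)\diff R$ together with its analogue $G_{\nu}$. Then $Y_1-Y_2=4\pi\int_0^t\big(\int_{\R^+}\frac{G_{\mu}(r,s)-G_{\nu}(r,s)}{r}\diff\mu_s(r)+\int_{\R^+}\frac{G_{\nu}(r,s)}{r}\diff(\mu_s-\nu_s)(r)\big)\diff s$. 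For the first inner term I bound $|G_{\mu}-G_{\nu}|$ through \ref{E2} and $\int_{\R^+}\frac{\widetilde{L}(R,r)}{R}\diff R\le C(\sigma)$ (Lemma~\ref{lem:prop_of_L}\ref{propE},\ref{propD}, or directly \ref{propH}), together with $\int_{\R^+}\frac1r\diff\mu_s(r)\le\|\mu_{\bullet}\|_{E_T}$; after enlarging the inner time integral to $[0,t]$ this yields $C_d\int_0^t\|\mu_s-\nu_s\|_{BL^*,w}\diff s$. For the second inner term I need $G_{\nu}(\cdot,s)\in BL(\R^+)$ with norm $\le C_d$: boundedness follows from $\mathcal{E}\in[0,1]$ and $\widetilde{L}\le 2\sigma^2$ on its support $|R-r|\le\sigma$, while the Lipschitz bound comes from differentiating under the integral in $r$ — legitimate because $\mathcal{E}[\nu_{\bullet},R,t-s]$ is independent of $r$ and acts as a bounded multiplier — and invoking $|\partial_r\widetilde{L}|\le4\sigma$ on the same support (Lemma~\ref{lem:prop_of_L}\ref{propB}). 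The weighted-norm definition then gives $\int_{\R^+}\frac{G_{\nu}(r,s)}{r}\diff(\mu_s-\nu_s)(r)\le\|G_{\nu}(\cdot,s)\|_{BL}\,\|\mu_s-\nu_s\|_{BL^*,w}\le C_d\,\|\mu_s-\nu_s\|_{BL^*,w}$, and integrating in $s$ closes the block.

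Collecting the four contributions and taking the supremum over $\psi$ with $\|\psi\|_{BL}\le1$ yields \eqref{eq:lem:continuity_duh_formula}. The only genuinely delicate point is the $\minT(1,R)$ rewriting in the $\gamma$-difference term, which both explains the appearance of the weight $r\,\minT(1,r)$ and is where property \ref{E4} is indispensable; the remaining estimates are a routine combination of the kernel bounds of Lemma~\ref{lem:prop_of_L} with the continuity properties \ref{E1}--\ref{E4} of $\mathcal{E}$.
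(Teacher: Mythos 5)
Your proposal is correct and follows essentially the same route as the paper: the same $(X_1-X_2)+(Y_1-Y_2)$ decomposition, the same $\minT(1,R)$ rewriting of the $\gamma_1-\gamma_2$ term via \ref{E4} and the product rule, \ref{E2} for the semigroup differences against $\gamma_2$ and $\mu_s$, and the same $BL$-regularity of $r\mapsto\int\psi(R)\,\mathcal{E}\,\widetilde{L}(R,r)\diff R$ (Lemma~\ref{lem:prop_of_L}\ref{propI}) to close the $(\mu_s-\nu_s)$ term. Your $G_\mu,G_\nu$ telescoping of the $Y$-block is just a notational repackaging of the paper's $Z_1+Z_2$ split.
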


\begin{proof}
Thanks to  and \eqref{eq:duh_solution_concept_tildenu}, we write for all $\psi \in BL(\R^+)$ with $\| \psi \|_{BL} \leq 1$
\begin{equation*}
\int_{\R^+} \frac{\psi(R)}{R} \diff \widetilde{\mu_t}(R) = X_1 + Y_1, \qquad \int_{\R^+} \frac{\psi(R)}{R} \diff \widetilde{\nu_t}(R) =  X_2 + Y_2,
\end{equation*} 
where $X_1$ and $Y_1$ corresponds to the first and second summand in \eqref{eq:duh_solution_concept_tildemu} respectively; similarly for $X_2$ and $Y_2$. To estimate $\| \widetilde{\mu_t} - \widetilde{\nu_t} \|_{BL^*,w}$, we write
$
\int_{\R^+} \frac{\psi(R)}{R} \diff \left(\widetilde{\mu_t}-\widetilde{\nu_t}\right)(R) = (X_1 - X_2) + (Y_1 - Y_2)
$
and we estimate two differences separately.\\

\noindent \underline{Term $X_1 - X_2$.} With triangle inequality, this difference is controlled as
$$
\left|\int_{\R^+} \frac{\psi(R)}{R}  \, \mathcal{E}[\mu_{\bullet},R, t] \, \diff \left(\gamma_1 - \gamma_2\right)(R) \right| +  \left|\int_{\R^+} \frac{\psi(R)}{R}\,   \left(\mathcal{E}[\mu_{\bullet}, R, t] - \mathcal{E}[\nu_{\bullet}, R, t] \right) \, \diff \gamma_2(R) \right| =: Q_1 + Q_2.
$$
For term $Q_1$ we write
$$
Q_1 = \left|\int_{\R^+} \psi(R)  \, \minT(1,R) \, \mathcal{E}[\mu_{\bullet},R, t] \frac{\diff \left(\gamma_1 - \gamma_2\right)(R)}{R\, \minT(1,R)} \right|.
$$
Note that $\|\psi\|_{BL} \leq 1$ and $R \mapsto \minT(1,R) \, \mathcal{E}[\mu_{\bullet},R, t] \in BL(\R^+)$ with norm controlled with $C_d$ cf. Lemma \ref{lem:E_basic_estimates} \ref{E4}. Hence, product of these functions also belong to $BL(\R^+)$ with norm bounded by $2\,C_d$ cf. Lemma \ref{lem:prod_BL_bel}. It follows that
\begin{equation}\label{eq:estimateQ1_cont_duh}
Q_1 \leq C_d \, \left\|\frac{\gamma_1 - \gamma_2}{R\, \minT(1,R)} \right\|_{BL^*}.
\end{equation}
For term $Q_2$ we note that $\left|\mathcal{E}[\mu_{\bullet}, R, t] - \mathcal{E}[\nu_{\bullet}, R, t] \right| \leq \, \frac{C_d}{R} \, \int_0^t \| \mu_s - \nu_s \|_{BL^*,w} \diff s$ thanks to Lemma \ref{lem:E_basic_estimates} \ref{E2}. Hence,
\begin{equation}\label{eq:estimateQ2_cont_duh}
Q_2 \leq C_d \, \int_0^t \| \mu_s - \nu_s \|_{BL^*,w} \diff s \, \left\| \frac{\gamma_2}{R^2} \right\|_{TV} \leq C_d \, \int_0^t \| \mu_s - \nu_s \|_{BL^*,w} \diff s.
\end{equation}

\noindent \underline{Term $Y_1 - Y_2$.} To shorten formulas we introduce $\mathcal{E}^1(R,s):=\mathcal{E}[\mu_{\bullet}, R, t-s]$, $\mathcal{E}^2(R,s):=\mathcal{E}[\nu_{\bullet}, R, t-s]$. Aiming at triangle inequality again, we write 
\begin{align*}
|Y_1-Y_2| \leq & \, \left|\int_0^t \int_{\R^+}  \int_{\R^+} \frac{\psi(R)}{R}\,  \left(\mathcal{E}^1(R,s) - \mathcal{E}^2(R,s) \right) \, {L}(R,r) \,S(R) \diff \mu_s(r) \diff R \diff s \right| \\
&  + \left|\int_0^t \int_{\R^+} \int_{\R^+} \frac{\psi(R)}{R}\, \mathcal{E}^2(R,s) \,   {L}(R,r)\,S(R)  \diff (\mu_s -\nu_s)(r) \diff R \diff s \right| =:\, Z_1 + Z_2.
\end{align*}
For $Z_1$ we estimate $ \left|\mathcal{E}^1(R,s) - \mathcal{E}^2(R,s) \right| \leq  \frac{C_d}{R} \, \int_0^{t-s} \| \mu_u - \nu_u \|_{BL^*,w} \diff u$ using Lemma \ref{lem:E_basic_estimates}. Hence,
\begin{equation}\label{eq:estimate_aux_Z1}
Z_1 \leq C_d \, \int_0^{t} \| \mu_u - \nu_u \|_{BL^*,w} \diff u \, \int_0^t \int_{\R^+} \left[ \int_{\R^+} \frac{|\psi(R)|}{R^2} \, {L}(R,r) \,S(R) \diff R \right] \diff \mu_s(r)  \diff s.
\end{equation}
Now, we use $S(R) = 4\pi R^2$ and $L(R,r) = \frac{\widetilde{L}(R,r)}{Rr}$ cf. \eqref{eq:defLtilde}. Thanks to Lemma \ref{lem:prop_of_L} \ref{propH}, the integral $\int_{\R^+} \frac{|\psi(R)|}{R^2} \, \frac{\widetilde{L}(R,r)}{R} \, S_d(R) \diff R$ is bounded with $C_d$. Hence,
$$
\int_0^t \int_{\R^+} \left[ \int_{\R^+} \frac{|\psi(R)|}{R^2} \, {L}(R,r) \,S(R) \diff R \right] \diff \mu_s(r)  \diff s \leq 
C_d \int_0^t \int_{\R^+} \frac{1}{r} \diff \mu_s(r) \diff s \leq C_d \, t \, \| \mu_{\bullet} \|_{E_T},
$$
cf. \eqref{eq:norm_on_E}. From \eqref{eq:estimate_aux_Z1} we conclude that
\begin{equation}\label{eq:estimateZ1_cont_duh}
Z_1 \leq C_d \, \int_0^{t} \| \mu_s - \nu_s \|_{BL^*,w} \diff s.
\end{equation}
When it comes to term $Z_2$ we observe that $\frac{\psi(R)}{R}\, \mathcal{E}^2(R,s) \,   {L}(R,r)\,S(R) = {\psi(R)}\, \mathcal{E}^2(R,s) \,   \frac{\widetilde{L}(R,r)}{r} \,4\pi$. By Fubini Theorem,
$$
Z_2 = \int_0^t \int_{\R^+} \left[\int_{\R^+} \frac{\psi(R)}{R}\, \mathcal{E}^2(R,s) \,   {L}(R,r)\,S(R) \diff R \right] \diff (\mu_s -\nu_s)(r)  \diff s
$$
Hence, to bound the integrand in terms of $\|\mu_s - \nu_s\|_{BL^*,w}$, we only need to prove that the map $r \mapsto \int_{\R^+} \psi(R) \, \mathcal{E}^2(R,s) \,  {\widetilde{L}(R,r)} \diff R$ belongs to $BL(\R^+)$. As $\psi(R) \, \mathcal{E}^2(R,s)$ is bounded with $\|\psi\|_{\infty}$, this follows from Lemma \ref{lem:prop_of_L} \ref{propI}. Hence,
\begin{equation}\label{eq:estimateZ2_cont_duh}
Z_2 \leq C_d \, \int_0^{t} \| \mu_s - \nu_s \|_{BL^*,w} \diff s.
\end{equation}
Collecting estimates \eqref{eq:estimateQ1_cont_duh}, \eqref{eq:estimateQ2_cont_duh}, \eqref{eq:estimateZ1_cont_duh} and \eqref{eq:estimateZ2_cont_duh}, we conclude the proof of the lemma.
\end{proof}

\begin{lemma}[continuity of Duhamel's formula wrt time]\label{lem:cont_duh_time}
Let $\widetilde{\mu_t}$ be defined with \eqref{eq:duh_solution_concept_tildemu}. Then, under notation above,
\begin{equation}\label{eq:lem:cont_duh_time}
\left\| \widetilde{\mu_{t_1}} - \widetilde{\mu_{t_2}} \right\|_{BL^*,w} \leq C_d \, |t_1-t_2|.
\end{equation}
\end{lemma}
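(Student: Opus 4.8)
The plan is to estimate the pairing $\int_{\R^+}\frac{\psi(R)}{R}\diff\bigl(\widetilde{\mu_{t_2}}-\widetilde{\mu_{t_1}}\bigr)(R)$ uniformly over all test functions $\psi\in BL(\R^+)$ with $\|\psi\|_{BL}\le 1$, and then pass to the supremum in the definition \eqref{defeq:flatnorm_weighted} of $\|\cdot\|_{BL^*,w}$. Assume without loss of generality that $t_1<t_2$. Using \eqref{eq:duh_solution_concept_tildemu} I split the pairing as $(X_1(t_2)-X_1(t_1))+(Y_1(t_2)-Y_1(t_1))$, where $X_1$ is the $\gamma_1$-term and $Y_1$ is the Duhamel integral term, and I bound the two differences separately.

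For the $X_1$-difference only the factor $\mathcal{E}[\mu_{\bullet},R,t]$ depends on time, so the pointwise time-continuity estimate of Lemma~\ref{lem:E_basic_estimates} \ref{E3}, namely $|\mathcal{E}[\mu_{\bullet},R,t_2]-\mathcal{E}[\mu_{\bullet},R,t_1]|\le \frac{C_d}{R}\,|t_2-t_1|$, combined with the weight $\tfrac1R$ yields
\[
|X_1(t_2)-X_1(t_1)|\ \le\ \|\psi\|_\infty\,|t_2-t_1|\,C_d\int_{\R^+}\frac{1}{R^2}\diff\gamma_1(R)\ =\ C_d\,|t_2-t_1|\,\left\|\frac{\gamma_1}{R^2}\right\|_{TV},
\]
and $\left\|\gamma_1/R^2\right\|_{TV}$ is an admissible constant by Notation~\ref{notation:Cdatac}.

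The main work is the $Y_1$-difference, which I handle by writing $\int_0^{t_2}=\int_0^{t_1}+\int_{t_1}^{t_2}$. Over the common interval $[0,t_1]$ only $\mathcal{E}[\mu_{\bullet},R,t-s]$ changes, so the integrand carries the factor $\mathcal{E}[\mu_{\bullet},R,t_2-s]-\mathcal{E}[\mu_{\bullet},R,t_1-s]$, again controlled by $\frac{C_d}{R}\,|t_2-t_1|$ through \ref{E3}. The delicate point is that, multiplied by the weight $\tfrac1R$, this produces a $\tfrac{1}{R^2}$ factor against the kernel; this is absorbed exactly as in the $Z_1$-estimate of Lemma~\ref{lem:continuity_duh_formula}, using the identity $\frac{1}{R^2}\,L(R,r)\,S(R)=\frac{4\pi}{r}\,\frac{\widetilde{L}(R,r)}{R}$ and Lemma~\ref{lem:prop_of_L} \ref{propH} to get $\int_{\R^+}\frac{|\psi(R)|}{R^2}L(R,r)S(R)\diff R\le \frac{C_d}{r}$. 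Integrating against $\mu_s$ then gives $C_d\,|t_2-t_1|\,\|\mu_s\|_{BL^*,w}$, and integration in $s$ over $[0,t_1]$ yields a bound $\le C_d\,|t_2-t_1|$. Over the remaining slab $[t_1,t_2]$ I simply use $\mathcal{E}[\mu_{\bullet},R,t_2-s]\le 1$ from \ref{E1} and repeat the a priori computation of Lemma~\ref{lem:aprioriestimateinweightedspace}: writing $\frac{\psi(R)}{R}L(R,r)S(R)=4\pi\,\psi(R)\,\frac{\widetilde{L}(R,r)}{r}$ and invoking Lemma~\ref{lem:prop_of_L} \ref{propI}, the inner $R$-integral is $\le \frac{C_d}{r}$, so the slab contributes $\le C_d\int_{t_1}^{t_2}\|\mu_s\|_{BL^*,w}\diff s\le C_d\,\|\mu_{\bullet}\|_{E_T}\,|t_2-t_1|$.

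Collecting the three estimates and taking the supremum over $\psi$ gives \eqref{eq:lem:cont_duh_time}. I expect the only genuinely delicate step to be the control of the $\tfrac{1}{R^2}$ singularity in the common-interval part: the naive product of the weight $\tfrac1R$, the $\tfrac1R$ coming from the time-continuity of $\mathcal{E}$, and the kernel would be non-integrable near $R=0$, and it is precisely the cancellation $\frac{1}{R^2}L(R,r)S(R)=\frac{4\pi}{r}\frac{\widetilde{L}(R,r)}{R}$ together with the weighted bound \ref{propH} that makes it finite. Everything else is a routine repetition of the estimates already established in the continuity-in-parameters and a priori lemmas.
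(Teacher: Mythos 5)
Your proposal is correct and follows essentially the same route as the paper: the same split into the initial-condition term (handled via \ref{E3} and $\|\gamma_1/R^2\|_{TV}$) and the Duhamel term, with the latter decomposed into the common interval $[0,t_1]$ (controlled by \ref{E3} together with the cancellation $\tfrac{1}{R^2}L(R,r)S(R)=\tfrac{4\pi}{r}\tfrac{\widetilde{L}(R,r)}{R}$ and \ref{propH}) and the slab $[t_1,t_2]$ (controlled by \ref{E1} and \ref{propI}). The paper's terms $G_i$, $K_2$, $K_1$ correspond exactly to your three estimates, so no further comment is needed.
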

\begin{proof}
Thanks to \eqref{eq:duh_solution_concept_tildemu}, we have
\begin{equation*}
    \begin{split}
&\int_{\R^+} \frac{\psi(R)}{R} \diff \widetilde{\mu_{t_1}}(R) = \int_{\R^+} \frac{\psi(R)}{R}\,  \mathcal{E}[\mu_{\bullet}, R, {t_1}] \, \diff \gamma_1(R)\, + \\
& \quad \qquad \qquad + \int_0^{t_1}  \int_{\R^+} \int_{\R^+} \frac{\psi(R)}{R}\, \mathcal{E}[\mu_{\bullet}, R, t_1-u] \,  {L}(R,r) \, S(R) \diff \mu_u(r) \diff R \diff u =: G_1 + H_1,
\end{split}
\end{equation*}
and similar expression for $\widetilde{\mu_{t_2}}$ which we split for $G_2 + H_2$. As in Lemma \ref{lem:continuity_duh_formula}, we consider differences $G_1-G_2$ and $H_1-H_2$ separately. For term $G_1 - G_2$ we use $\left|\mathcal{E}[\mu_{\bullet}, R, t_1] - \mathcal{E}[\mu_{\bullet}, R, t_2]\right| \leq \frac{C_d}{R}|t_1-t_2|$ cf. Lemma \ref{lem:E_basic_estimates} \ref{E3} to compute
\begin{align*}
|G_1 - G_2| &\leq C_d \,\int_{\R^+} \frac{\psi(R)}{R}\,  \left|\mathcal{E}[\mu_{\bullet}, R, {t_1}] - \mathcal{E}[\mu_{\bullet}, R, {t_2}]\right| \, \diff \gamma_1(R) \leq \|\psi\|_{\infty} \, C_d\, |t_1-t_2| \, \left\| \frac{\gamma_1}{R^2} \right\|_{TV},
\end{align*}
cf. Lemma \ref{lem:prod_BL_bel} (A'). To estimate $H_1 - H_2$, we introduce auxillary notation $\mathcal{E}^{1}(u,R) = \mathcal{E}[\mu_{\bullet}, R, t_1-u]$, $\mathcal{E}^{2}(u,R) = \mathcal{E}[\mu_{\bullet}, R, t_2-u]$ and estimate
\begin{align*}
|H_1 - H_2| \leq &\, \int_{t_1}^{t_2} \int_{\R^+} \int_{\R^+} \frac{\psi(R)}{R}\, \mathcal{E}^{2}(u,R) \,  {L}(R,r) \, S(R) \diff \mu_u(r) \diff R \diff u\\
 &  +  \int_0^{t_1} \int_{\R^+} \int_{\R^+} \frac{\psi(R)}{R}\, (\mathcal{E}^{2}(u,R) - \mathcal{E}^{1}(u,R)) \,  {L}(R,r) \, S(R) \diff \mu_u(r) \diff R \diff u := K_1 + K_2. 
\end{align*}
To estimate $K_1$ we recall that ${L}(R,r) = \frac{\widetilde{L}(R,r)}{Rr}$ and $S(R)=4\pi R^2$ so that
\begin{multline*}
K_1 \leq 4\pi \int_{t_1}^{t_2} \int_{\R^+} \left[ \int_{\R^+} {\psi(R)}\, \mathcal{E}^{2}(u,R) \, \widetilde{L}(R,r)\diff R \right]  \frac{\diff \mu_u(r)}{r} \diff u \leq \\
\leq C_d  \int_{t_1}^{t_2} \int_{\R^+} \frac{\diff \mu_u(r)}{r} \diff u \leq C_d |t_1 - t_2| \| \mu_{\bullet} \|_{E_T} \leq C_d \,  |t_1-t_2|,
\end{multline*}
where we used Lemma \ref{lem:prop_of_L} \ref{propI} for the inner integral with respect to $R$. To estimate $K_2$ we use that $\left|\mathcal{E}^{1}(u,R) - \mathcal{E}^{2}(u,R)\right| \leq \frac{C_d}{R}|t_1-t_2|$ cf. Lemma \ref{lem:E_basic_estimates} \ref{E3}. Since $S(R) = 4\pi R^2$,
\begin{align*}
K_2 &\leq 4\pi C_d \int_0^{t_1} \int_{\R^+} \int_{\R^+} {\psi(R)}\, |t_1-t_2| \,  {L}(R,r)  \diff \mu_u(r) \diff R \diff u \\
&\leq C_d \,  |t_1-t_2| \int_0^{t_1} \int_{\R^+} \left[ \int_{\R^+} {\psi(R)}\, \,  \frac{\widetilde{L}(R,r)}{R} \diff R \right] \frac{\diff\mu_u(r)}{r}  \diff u \leq C_d \,  |t_1-t_2| \, \| \mu_{\bullet} \|_{E_T} \leq C_d \,  |t_1-t_2|,
\end{align*}
where we used Lemma \ref{lem:prop_of_L} \ref{propH} for the integral with respect to $R$.
\end{proof}

\noindent Now, we are in position to prove Theorem \ref{thm:well-posed_flat_metric_radial}. 

\begin{proof}[Proof of Theorem \ref{thm:well-posed_flat_metric_radial}]
Let $P:E_T \to E_T$ be defined with (RHS) of \eqref{eq:duh_solution_concept_TF}, i.e. for $\mu_{\bullet} \in E_T$ we put
\begin{equation*}
\begin{split}
&\int_{\R^+} \frac{\psi(R)}{R} \diff(P\mu_{\bullet})_t(R) = \int_{\R^+} \frac{\psi(R)}{R} \, e^{-\int_0^t \int_{\R^+} {L}(R, r) \diff \mu_s(r) \diff s }\diff\mu_0(R) \,+\\ &\qquad \qquad + \int_0^t \int_{\R^+} \int_{\R^+} \frac{\psi(R)}{R}\,  e^{-\int_0^{t-s} \int_{\R^+} {L}(R,r) \diff \mu_u(r) \diff u } \,   L(R,r) \, S(R) \diff \mu_s(r) \diff \lambda(R) \diff s .
\end{split}
\end{equation*}
We define
$$
K_T = \left\{\nu_{\bullet} \in E_T: \nu_0 = \mu_0, \sup_{t\in[0,T]} \|\nu_{t}\|_{BL^*,w} \leq 2\, \|\mu_0\|_{BL^*,w} \right\}
$$
and we want to find $T_{e}$ such that $P: K_{T_{e}} \to K_{T_{e}}$ is contractive. Lemma \ref{lem:aprioriestimateinweightedspace} implies
\begin{equation*}
\|(P\mu_{\bullet})_t\|_{BL^*,w} \leq \|\mu_0\|_{BL^*,w} + C(\sigma) \int_0^t \| \mu_s\|_{BL^*,w} \diff s \leq \|\mu_0\|_{BL^*,w} + C(\sigma) \,T_{e} \, 2 \, \|\mu_0\|_{BL^*,w}.
\end{equation*}
Hence, $T_{e} \leq 1/(2\,C(\sigma))$ and we only need to prove that $P$ is contractive. Using Lemma \ref{lem:continuity_duh_formula} with $\gamma_1 = \gamma_2 := \mu_0$, we discover that there is some constant $C_d$ depending on data such that
$$
\| (P\mu_{\bullet})_t - (P\nu_{\bullet})_t \|_{{BL^*,w}} \leq C_d \, \int_0^t \| \mu_s - \nu_s \|_{BL^*,w} \diff s \\ \leq C_d \, T_{e} \, \sup_{t \in [0,T_{e}]} \| \mu_{\bullet} - \nu_{\bullet} \|_{BL^*,w}. 
$$
Hence, Banach Fixed Point Theorem provides well-posedness for some small time depending on initial data $T_{e} = \min\left( \frac{1}{2 C(\sigma)}, \frac{1}{2 C_d}\right)$. But then Lemma \ref{lem:aprioriestimateinweightedspace} provides a priori estimate
\begin{equation}\label{eq:growth_of_solutions_explicit_constants}
\|\mu_t\|_{BL^*,w} \leq \| \mu_0\|_{BL^*,w} \, e^{C(\sigma) \, t}, \qquad \|\mu_t\|_{TV} \leq \| \mu_0\|_{TV} \, e^{C(\sigma) \, t}.
\end{equation}
This concludes the proof of well-posedness on arbitrary large interval of times.\\

\noindent To prove continuity estimate, we first observe that in view of \eqref{eq:growth_of_solutions_explicit_constants}, $\mu_t$ is bounded with some constant depending on $T$, $\sigma$ and the size of initial data with respect to $\| \cdot \|_{BL^*,w}$. Hence, constant $C_d$ appearing in Lemmas \ref{lem:continuity_duh_formula} and \ref{lem:cont_duh_time} can be bounded in terms of
$$
\left\|\frac{\mu_0}{r^2}\right\|_{BL^*}, \, \left\|\frac{\nu_0}{r^2}\right\|_{BL^*}, \,  \|\mu_0\|_{BL^*,w},  \, \|\nu_0\|_{BL^*,w}, \, L, \, \sigma, T.
$$
Applying Gronwall inequality to \eqref{eq:lem:continuity_duh_formula} we deduce \eqref{eq:important_continuity_estimate} while \eqref{eq:continuity_in_time_measure_solution} follows directly from \eqref{eq:lem:cont_duh_time}.
\end{proof}


\section{Solution to numerical scheme as a measure solution}\label{sect:infinite_speed_of_prop}


\noindent Now, we would like to find connection between numerical scheme \eqref{eq:ODE_num_scheme_INTRODUCTION} and measure solutions. The issue is that measure solutions to \eqref{eq:transform_coordinates_INTRODUCTION} are neither compactly supported nor concentrated at discrete points, a consequence of infinite speed of propagation. \\

\noindent First, we deal with the support of solution. The idea is to truncate the support and control the resulting error. The additional assumption made to handle these effects is that the initial datum has a certain decay at infinity. In particular, this is always satisfied for compactly supported initial datum.

\begin{assumption}[Decay of initial condition $\mu_0$]\label{ass:decay}
Let $M: \R^+ \to \R^+$ be a $C^1$ strictly increasing function with $\frac{M(r+\sigma)}{M(r)} \leq C_{M}(\sigma)$. We assume that $\mu_0 \in \mathcal{M}^+(\R^+)$ is such that 
$
\int_{\R^+} \frac{M(r)}{r} \diff \mu_0(r) < \infty.
$
\end{assumption}
\begin{remark}\label{rem:complicated_cond_on_M}
We list here three examples of functions $M$ satisfying condition $\frac{M(r+\sigma)}{M(r)} \leq C_{M}(\sigma)$.
\begin{itemize}
\item[(A)] $M_1(r) =e^r$ with
$
\frac{M_1(r+\sigma)}{M_1(r)} = e^{\sigma} =: C_{M_1}(\sigma). 
$
\item[(B)] $M_2(r) = (1+r)^k$ with $k\in \mathbb{N}$ and
$
\frac{M_2(r+\sigma)}{M_2(r)} \leq (1+\sigma)^k =: C_{M_2}(\sigma).
$
\item[(C)] If $M$ is as in Assumption \ref{ass:decay} then its truncation $M^k(r):= \begin{cases} M(r) &\mbox{ if } r\leq k \\
M(k) &\mbox{ if } r > k
\end{cases}$ satisfies $\frac{M^k(r+\sigma)}{M^k(r)} \leq \max(1,C_{M}(\sigma))$. This is not trivial only if $r + \sigma > k$ while $r \leq k$. But then, by monotonicity of $M$
$$
\frac{M^k(r+\sigma)}{M^k(r)} = \frac{M(k)}{M(r)} \leq \frac{M(r+\sigma)}{M(r)} \leq C_{M}(\sigma).
$$
\end{itemize}
\end{remark}
\begin{theorem}[Truncation of support]\label{lem:tail_control_solution}
Under Assumption \ref{ass:decay}, the mild measure solution of \eqref{eq:transform_coordinates_INTRODUCTION} with initial data $\mu_0$ satisfies
\begin{equation}\label{eq:propagation_of_moments}
\int_{\R^+} \frac{M(r)}{r} \diff \mu_t(r) \leq \int_{\R^+} \frac{M(r)}{r} \diff \mu_0(r) \, e^{C \,t}, \qquad C = C(\sigma)\,\max(1,C_M(\sigma)).
\end{equation}
In particular, we have an estimate on the tail of $\mu_t$:
$$
\left\| \mu_t|_{[0,R_0]} - \mu_t \right\|_{BL^*,w}  \leq \frac{1}{M(R_0)} \,  \left[\int_{\R^+} M(r) \diff \mu_0(r)\right] \, e^{C \,t},
$$
where $\mu_t|_{[0,R_0]}$ is restriction of $
\mu_t$ to the interval $[0,R_0]$.
\end{theorem}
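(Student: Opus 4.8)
The plan is to establish the moment propagation bound \eqref{eq:propagation_of_moments} first, and then to deduce the tail estimate from it by a short truncation argument. For the moment bound I would test the mild formulation \eqref{eq:duh_solution_concept_TF} against the weight $M$ itself, i.e. formally take $\psi = M$. Since $M$ need not be bounded, the rigorous route is to work with the truncations $M^k$ from Remark \ref{rem:complicated_cond_on_M}(C): these lie in $BL(\R^+)$ (bounded by $M(k)$, Lipschitz because $M \in C^1$) and, crucially, preserve the doubling bound $M^k(r+\sigma) \leq \max(1,C_M(\sigma))\,M^k(r)$. Plugging $\psi = M^k$ into \eqref{eq:duh_solution_concept_TF} and using $\mathcal{E} \in [0,1]$ (Lemma \ref{lem:E_basic_estimates}\ref{E1}), the first term on the right is at most $\int_{\R^+} \frac{M^k(R)}{R}\diff\mu_0(R)$.

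For the second term I would rewrite, using $S(R)=4\pi R^2$ and $L=\widetilde{L}/(Rr)$ from \eqref{eq:defLtilde}, the identity $\frac{M^k(R)}{R}\,L(R,r)\,S(R) = 4\pi\,\frac{M^k(R)\,\widetilde{L}(R,r)}{r}$, and then estimate the inner $R$-integral. The decisive computation is
\begin{equation*}
\int_{\R^+} M^k(R)\,\widetilde{L}(R,r)\diff R \leq 2\sigma^2 \int_{|R-r|\leq\sigma} M^k(R)\diff R \leq 4\sigma^3\,M^k(r+\sigma) \leq C(\sigma)\,\max(1,C_M(\sigma))\,M^k(r),
\end{equation*}
where I use $\widetilde{L}\leq 2\sigma^2$ together with its support $|R-r|\leq\sigma$ (Lemma \ref{lem:prop_of_L}\ref{propB},\ref{propD}), monotonicity of $M^k$ to pull out $M^k(r+\sigma)$, and the doubling property. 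This reduces the whole second term to $C(\sigma)\max(1,C_M(\sigma))\int_0^t\int_{\R^+}\frac{M^k(r)}{r}\diff\mu_s(r)\diff s$. Writing $F_k(t)=\int_{\R^+}\frac{M^k(r)}{r}\diff\mu_t(r)$, which is finite since $M^k$ is bounded and $\mu_t\in\mathcal{M}^+_w(\R^+)$, I obtain a Grönwall-type inequality $F_k(t)\leq F_k(0)+C\int_0^t F_k(s)\diff s$ with $C=C(\sigma)\max(1,C_M(\sigma))$, hence $F_k(t)\leq F_k(0)\,e^{Ct}$. As $M^k\uparrow M$, the monotone convergence theorem upgrades this to \eqref{eq:propagation_of_moments}.

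For the tail estimate I would first observe that $\mu_t-\mu_t|_{[0,R_0]}=\mu_t|_{(R_0,\infty)}$ is a non-negative measure, for which the weighted flat norm collapses to the weighted total variation: taking $\psi\equiv 1$ in \eqref{defeq:flatnorm_weighted} is optimal (the weighted analog of Lemma \ref{lem:prod_BL_bel}(B)), so $\|\mu_t|_{(R_0,\infty)}\|_{BL^*,w}=\int_{(R_0,\infty)}\frac{1}{r}\diff\mu_t(r)$. On $(R_0,\infty)$ monotonicity of $M$ gives $\frac{1}{r}=\frac{1}{M(r)}\cdot\frac{M(r)}{r}\leq\frac{1}{M(R_0)}\cdot\frac{M(r)}{r}$, so this quantity is bounded by $\frac{1}{M(R_0)}\int_{\R^+}\frac{M(r)}{r}\diff\mu_t(r)$, and inserting \eqref{eq:propagation_of_moments} yields the claimed control in terms of the initial moment $\int_{\R^+}\frac{M(r)}{r}\diff\mu_0(r)$ from Assumption \ref{ass:decay}.

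The main obstacle I anticipate is the rigorous handling of the unbounded weight $M$: one cannot directly insert $M$ into \eqref{eq:duh_solution_concept_TF}, and the truncation $M^k$ must be chosen so that it simultaneously belongs to $BL(\R^+)$ and preserves the doubling inequality. This is precisely the role of Remark \ref{rem:complicated_cond_on_M}(C), and it is the origin of the factor $\max(1,C_M(\sigma))$ in the exponent. Once this is in place, the explicit $R$-integral estimate and the Grönwall plus monotone-convergence passage are routine.
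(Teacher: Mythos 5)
Your proposal is correct and follows essentially the same route as the paper: test the mild formulation against the truncations $M^k$ of Remark \ref{rem:complicated_cond_on_M}(C), use $\mathcal{E}\in[0,1]$ for the initial-data term, reduce the interaction term via $L\,S/R = 4\pi\widetilde{L}/r$ and the support/doubling properties to a Gr\"onwall inequality, pass to the limit by monotone convergence, and deduce the tail bound from monotonicity of $M$ on $(R_0,\infty)$. The only cosmetic difference is in bounding the inner $R$-integral (you use $\|\widetilde{L}\|_\infty$ times the measure of the support, the paper factors out $M^k(r+\sigma)/M^k(r)$ and bounds $\int\widetilde{L}\,\diff R$), which yields the same constant up to $C(\sigma)$.
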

\begin{proof}[Proof of Theorem \ref{lem:tail_control_solution}]
We want to use definition of measure solution \eqref{eq:duh_solution_concept_TF} with test function $\psi(R) = M(R)$ but this is not an admissible test function so we fix $k \in \mathbb{N}$ and consider truncation $M^k$ as in Remark \ref{rem:complicated_cond_on_M} (C). Hence, we have
\begin{equation}
    \begin{split}
&\int_{\R^+} \frac{M^k(R)}{R} \diff {\mu_t}(R) = \int_{\R^+} \frac{M^k(R)}{R} \, \mathcal{E}[\mu_{\bullet}, R, t] \diff \mu_0(R)\, + \\
& \qquad \qquad + \int_0^t  \int_{\R^+} \int_{\R^+} \frac{M^k(R)}{R}\, \mathcal{E}[\mu_{\bullet}, R, t-s] \,  {L}(R,r) \, S(R) \diff \mu_s(r) \diff R \diff s =: X + Y.
\end{split}
\end{equation} 
Term $X$ is trivially controlled with
$
|X| \leq \int_{\R^+} \frac{M^k(R)}{R} \diff \mu_0(R).
$
For term $Y$ we write ${L}(R,r) = \frac{\widetilde{L}(R,r)}{R\,r}$ and note that $\frac{S(R)}{R^2} = 4\pi$ so that
\begin{equation}\label{eq:proof_mono_lemma}
\begin{split}
|Y| & =  \int_0^t  \int_{\R^+} \int_{\R^+} \frac{M^k(R)}{R}\,  \mathcal{E}[\mu_{\bullet}, R, t-s] \,  \frac{\widetilde{L}(R,r)}{R\,r} \, S(R) \diff \mu_s(r) \diff R \diff s \\
& = 4\pi  \int_0^t  \int_{\R^+} \left( \int_{\R^+} \frac{M^k(R)}{M^k(r)} \,  \widetilde{L}(R,r) \diff R \right) \, \frac{M^k(r)}{r}  \diff \mu_s(r) \diff s
\end{split}
\end{equation}
As $\widetilde{L}(R,r)$ is supported only on $|R-r|\leq \sigma$, we can use monotonicity (not necessarily strict monotonicity) of $M^k$ and Assumption \ref{ass:decay} to estimate
\begin{equation}\label{eq:estimate_tail_toreferindiscr}
\begin{split}
\int_{\R^+} \frac{M^k(R)}{M^k(r)} \,  \widetilde{L}(R,r) \diff R \leq \int_{\R^+} &\frac{M^k(r+\sigma)}{M^k(r)} \,  \widetilde{L}(R,r) \diff R \leq \\ &\leq C_M(\sigma) \int_{\R^+} \widetilde{L}(R,r) \diff R \leq \max(1,C_M(\sigma)) \, C(\sigma),
\end{split}
\end{equation}
thanks to Lemma \ref{lem:prop_of_L} \ref{propI}. Therefore, \eqref{eq:proof_mono_lemma} implies
$$
|Y| \leq \max(1,C_M(\sigma)) \, C(\sigma)  \int_0^t \int_{\R^+} \frac{M(r)}{r}  \diff \mu_s(r) \diff s.
$$
so that using Gronwall's inequality we deduce
$$
\int_{\R^+} \frac{M^k(r)}{r} \diff \mu_t(r) \leq \int_{\R^+} \frac{M^k(r)}{r} \diff \mu_0(r) \, e^{C \,t}, \qquad C = C(\sigma)\,\max(1,C_M(\sigma)).
$$
But then, $\int_{\R^+} \frac{M^k(r)}{r} \diff \mu_0(r) \leq \int_{\R^+} \frac{M(r)}{r} \diff \mu_0(r)$ and $M_k(r) \to M(r)$ increasingly so monotone convergence theorem yields \eqref{eq:propagation_of_moments}. The estimate on tail follows from simple computation:
$$
\left\| \mu_t|_{[0,R_0]} - \mu_t \right\|_{BL^*,w}  = \left\| \mu_t|_{(R_0, \infty)} \right\|_{BL^*,w} = \int_{[R_0, \infty)} \frac{1}{r} \diff \mu_t(r) \leq \frac{1}{M(R_0)} \int_{\R^+} \frac{M(r)}{r} \diff \mu_t(r),
$$
where we used again monotonicity of $M$.
\end{proof}
\noindent Now, having support truncated at $R_0$, we consider equation in the space of measures
\begin{equation}\label{eq:approx_discrete_interactions}
\partial_t \mu_t^N = (\lambda_N \, S(R) - \mu_t^N) \int_{\R^+} {L}(R,r) \diff \mu_t^N(r)
\end{equation}
where $\lambda_N = \sum_{i=1}^N \frac{R_0}{N} \, \delta_{x_i}$ approximates Lebesgue measure $\lambda$ on $[0,R_0]$, $x_i := \frac{i}{N} \, R_0$ and \eqref{eq:approx_discrete_interactions} is understood in the usual mild sense as in Definition \ref{def:measure_solution}, i.e. for all test functions $\psi \in BL(\R^+)$ we have
\begin{equation}\label{eq:duh_solution_concept_TF_approxN}
\begin{split}
&\int_{\R^+} \frac{\psi(R)}{R} \diff\mu_t^N(R) = \int_{\R^+} \frac{\psi(R)}{R} \, e^{-\int_0^t \int_{\R^+} {L}(R, r) \diff \mu_s^N(r) \diff s }\diff\mu_0(R) \,+\\ &\qquad \qquad + \int_0^t \int_{\R^+} \int_{\R^+} \frac{\psi(R)}{R}\,  e^{-\int_0^{t-s} \int_{\R^+} {L}(R,r) \diff \mu_u^N(r) \diff u } \,   L(R,r) \, S(R) \diff \mu_s^N(r) \diff \lambda(R) \diff s .
\end{split}
\end{equation}
It turns out that solutions to numerical scheme \eqref{eq:ODE_num_scheme_INTRODUCTION} can be written as a measure solution to \eqref{eq:approx_discrete_interactions}. 

\begin{theorem}\label{thm:removing_local_interactions}
Let $\mu_0 \in \mathcal{M}^+_{w}(\R^+)$ be as in Theorem \ref{thm:well-posed_flat_metric_radial} and suppose it satisfies Assumption \ref{ass:decay}. Let $C_d$ be any constant depending on size of initial datum and model parameters. Let
$$
\mu_0^N := \sum_{i=1}^N m_i^N(0) \, \delta_{x_i},
$$
where $m_i^N(0) \in \R^+$ are given. Consider \eqref{eq:approx_discrete_interactions} with initial condition $\mu_0^N$ and $1 \leq R_0 \leq N$. Then, the unique non-negative measure solution is of the form
$
\mu_t^N = \sum_{i=1}^N m_i^N(t) \, \delta_{x_i}
$
where $m_i^N(t)$ solve numerical scheme \eqref{eq:ODE_num_scheme_INTRODUCTION}. Moreover:
\begin{enumerate}[label=(\Alph*)]
     \item\label{discr_2} we have estimates:
    $$
    \sup_{t \in [0,T]} \| \mu_t^N \|_{BL^*,w} \leq \| \mu_0^N\|_{BL^*,w} \, e^{C(\sigma) \, T} \leq C_d, 
    \qquad 
    \sup_{t \in [0,T]} \| \mu_t^N \|_{TV} \leq \| \mu_0^N\|_{TV} \, e^{C(\sigma) \, T} \leq C_d,
    $$
    
    \item\label{discr_tail} discretization $\mu_t^N$ satisfies the same decay estimate as $\mu_t$:
    $$
    \int_{\R^+} \frac{M(r)}{r} \diff \mu_t^N(r) \leq \int_{\R^+} \frac{M(r)}{r} \diff \mu_0^N(r) \, e^{C \,t}, \qquad C = C(\sigma)\,\max(1,C_M(\sigma)).
    $$
    
    \item\label{discr_4} if $\widetilde{\mu_t}$ is a measure solution to \eqref{eq:transform_coordinates_INTRODUCTION} with initial data $\mu_0^N$ we have
    \begin{equation*}
    \left\| \widetilde{\mu_t} - \mu_t^N \right\|_{BL^*,w} \leq  \frac{C_d\,R_0^2}{N} +  \frac{C_d}{M(R_0)}.
    \end{equation*}

\end{enumerate}
\end{theorem}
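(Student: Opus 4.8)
The plan is to prove the structural claim first, then obtain (A) and (B) by repeating the arguments of Sections \ref{sect:radial_sln_measure_equation}--\ref{sect:infinite_speed_of_prop} with the Lebesgue measure $\lambda$ replaced by its discretisation $\lambda_N$, and finally to devote the main effort to the comparison estimate (C). For well-posedness of \eqref{eq:approx_discrete_interactions} I would rerun the Banach fixed point argument of Theorem \ref{thm:well-posed_flat_metric_radial}: the only structural change is that the source integral $\int_{\R^+}(\cdots)\diff\lambda(R)$ becomes $\sum_{i=1}^N\frac{R_0}{N}(\cdots)$, and since $\widetilde{L}(R,r)\leq 2\sigma^2$ is supported on $|R-r|\leq\sigma$ (Lemma \ref{lem:prop_of_L}\ref{propB},\ref{propD}), the number of grid points contributing is at most $2\sigma N/R_0+1$, so $\sum_i\frac{R_0}{N}\widetilde{L}(x_i,r)\leq(2\sigma+R_0/N)\,2\sigma^2\leq C(\sigma)$ using $R_0\leq N$. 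Thus $\lambda_N$ obeys the same bounds used in Lemmas \ref{lem:prop_of_L}\ref{propI} and \ref{lem:aprioriestimateinweightedspace}, and the fixed point, a priori and Gronwall estimates transfer verbatim, yielding (A). To identify the solution, I would verify that the ansatz $\mu_t^N=\sum_i m_i^N(t)\,\delta_{x_i}$ with $m_i^N$ solving \eqref{eq:ODE_num_scheme_INTRODUCTION} satisfies \eqref{eq:duh_solution_concept_TF_approxN}: since both $\mu_0^N$ and the source measure $\lambda_N\,S(R)$ are concentrated on $\{x_1,\dots,x_N\}$, so is $\mu_t^N$, and testing against the atom at $x_i$ recovers exactly $\partial_t m_i=(\frac{R_0}{N}S(x_i)-m_i)\sum_j L(x_i,x_j)m_j$; uniqueness from the fixed point then forces this to be the solution. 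For (B) I would repeat the proof of Theorem \ref{lem:tail_control_solution} with the test functions $M^k$, where the inner integral $\int\frac{M^k(R)}{M^k(r)}\widetilde{L}(R,r)\diff\lambda_N(R)$ is controlled exactly as above since $M^k$ is monotone and $\frac{M^k(r+\sigma)}{M^k(r)}\leq C_M(\sigma)$ on the support of $\widetilde{L}$.

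The heart of the proof is (C), which measures the error of replacing $\lambda$ by $\lambda_N$. Both $\widetilde{\mu_t}$ and $\mu_t^N$ solve the same Duhamel identity with the identical initial datum $\mu_0^N$, differing only in that the source is integrated against $\lambda$ for $\widetilde{\mu_t}$ and against $\lambda_N$ for $\mu_t^N$. I would subtract the two mild formulations tested against $\psi/R$ with $\|\psi\|_{BL}\leq 1$. The initial-data term carries the difference $\mathcal{E}[\widetilde{\mu}_{\bullet},R,t]-\mathcal{E}[\mu^N_{\bullet},R,t]$, which by Lemma \ref{lem:E_basic_estimates}\ref{E2} contributes $C_d\int_0^t\|\widetilde{\mu_s}-\mu_s^N\|_{BL^*,w}\diff s$ after integrating $\frac{1}{R}$ against $\mu_0^N$ (bounded since $\|\mu_0^N/r^2\|_{TV}\leq C_d$). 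Writing $\frac{\psi(R)}{R}L(R,r)S(R)=4\pi\,\psi(R)\frac{\widetilde{L}(R,r)}{r}$, the source difference I would split into three pieces by successively swapping $\widetilde{\mu_s}\to\mu_s^N$ in the innermost measure, $\mathcal{E}[\widetilde{\mu}_{\bullet}]\to\mathcal{E}[\mu^N_{\bullet}]$, and finally $\lambda\to\lambda_N$. The first two swaps again produce $C_d\int_0^t\|\widetilde{\mu_s}-\mu_s^N\|_{BL^*,w}\diff s$ via the $BL$-regularity in $r$ of $r\mapsto\int\psi\,\mathcal{E}\,\widetilde{L}(R,r)\diff R$ (Lemma \ref{lem:prop_of_L}\ref{propI}) and Lemma \ref{lem:E_basic_estimates}\ref{E2}.

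The genuinely new contribution is the discretisation error
\begin{equation*}
4\pi\int_0^t\int_{\R^+}\frac{1}{r}\left[\int_{\R^+}\psi(R)\,\mathcal{E}[\mu^N_{\bullet},R,t-s]\,\widetilde{L}(R,r)\diff(\lambda-\lambda_N)(R)\right]\diff\mu_s^N(r)\diff s.
\end{equation*}
I would split $\lambda-\lambda_N=(\lambda|_{[0,R_0]}-\lambda_N)+\lambda|_{(R_0,\infty)}$. On the tail $\lambda|_{(R_0,\infty)}$, since $\mu_s^N$ lives on $\{x_1,\dots,x_N\}\subset(0,R_0]$ and $\widetilde{L}(R,r)$ forces $|R-r|\leq\sigma$, only masses with $r\in(R_0-\sigma,R_0]$ contribute; bounding $\int_{R_0}^{\infty}\widetilde{L}(R,r)\diff R\leq 2\sigma^3$ and invoking the decay estimate (B) via $\int_{(R_0-\sigma,R_0]}\frac{1}{r}\diff\mu_s^N(r)\leq\frac{1}{M(R_0-\sigma)}\int\frac{M(r)}{r}\diff\mu_s^N\leq\frac{C_M(\sigma)}{M(R_0)}\,C_d$, yields the $\frac{C_d}{M(R_0)}$ term. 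On $[0,R_0]$ I would invoke Remark \ref{cor_approximation_lebesgue_measure}, $\|\lambda|_{[0,R_0]}-\lambda_N\|_{BL^*[0,R_0]}\leq\frac{R_0^2}{N}$, pairing it against $R\mapsto\psi(R)\,\mathcal{E}[\mu^N_{\bullet},R,t-s]\,\widetilde{L}(R,r)$; once this map lies in $BL([0,R_0])$ with norm bounded uniformly in $r,s$, the factor $\frac{1}{r}$ integrated against $\mu_s^N$ gives $\|\mu_s^N\|_{BL^*,w}\leq C_d$ and the bound $\frac{C_d R_0^2}{N}$. Collecting everything produces $\|\widetilde{\mu_t}-\mu_t^N\|_{BL^*,w}\leq C_d\int_0^t\|\widetilde{\mu_s}-\mu_s^N\|_{BL^*,w}\diff s+\frac{C_d R_0^2}{N}+\frac{C_d}{M(R_0)}$, and Gronwall closes the estimate.

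I expect the main obstacle to be precisely the $BL([0,R_0])$-regularity in the variable $R$ of the product $\psi(R)\,\mathcal{E}[\mu^N_{\bullet},R,t-s]\,\widetilde{L}(R,r)$, a direction not directly supplied by the earlier lemmas (\ref{propI} controls the $r$-variable, while Lemma \ref{lem:E_basic_estimates}\ref{E4} only controls $\minT(R,1)\,\mathcal{E}$). The delicate point is the singularity $\partial_R\mathcal{E}\sim\frac{1}{R}$ as $R\to 0$: it is exactly cancelled by the linear vanishing $\widetilde{L}(R,r)\approx 4Rr$ near $R=0$, so that $\partial_R(\mathcal{E}\,\widetilde{L})=\partial_R\mathcal{E}\cdot\widetilde{L}+\mathcal{E}\cdot\partial_R\widetilde{L}$ stays bounded with a constant uniform in $r$ (using Lemma \ref{lem:prop_of_L}\ref{propB} for $\partial_R\widetilde{L}$ and Lemma \ref{lem:E_basic_estimates}\ref{E4} for the singular factor). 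Verifying this cancellation carefully, rather than the otherwise routine Gronwall bookkeeping, is where the argument must be handled with the most care.
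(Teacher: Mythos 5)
Your proposal is correct and follows essentially the same route as the paper: the same atom-testing argument for the representation formula, the same grid-point counting for (A) and (B), and for (C) the same Duhamel subtraction with a three-way swap, the same $[0,R_0]$/tail splitting of $\lambda-\lambda_N$ paired with Remark \ref{cor_approximation_lebesgue_measure}, and crucially the same identification of the $1/R$-singularity of $\partial_R\mathcal{E}$ being cancelled by $\widetilde{L}(R,r)/R\leq 8\sigma$ (the content of the paper's Lemma \ref{lema:auxillary_computation_GlambdaN}). The only cosmetic difference is the order of the three swaps — you replace $\widetilde{\mu_s}$ by $\mu_s^N$ against the continuous $\lambda$ first, which lets you use \ref{propI} directly and puts $\mu_s^N$ (hence your assertion (B)) in the tail estimate, whereas the paper does that swap last against $\lambda_N$ and needs the discrete analogue Lemma \ref{lema:auxillary_computation_lambdaN}; both orderings close the Gronwall loop identically.
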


\begin{proof}
To prove representation formula for $\mu_t^N$, we first take any $\psi$ which is not supported at knots $\{x_i\}_{1\leq i \leq N}$. From definition of mild solution \eqref{eq:duh_solution_concept_TF_approxN} we obtain that $\int_{\R^+} \frac{\psi(R)}{R} \diff \mu_t^N(R) = 0$ which implies that any non-negative mild measure solution is supported on the knots. Hence, we only need to find $m_i^N(t)$. To do that, we take any $\psi$ which is supported only at one of the knots, say $x_i$. Because $x_i \neq 0$ 
\begin{equation*}
\begin{split}
&m_i^N(t) =  m_i^N(0)\,e^{-\int_0^t \int_{\R^+} {L}(x_i, r) \diff \mu_s^N(r) \diff s }\,+\phantom{\frac{R_0}{N}}\\ &\qquad \qquad +  \int_0^t \int_{\R^+} e^{-\int_0^{t-s} \int_{\R^+}   {L}(x_i,r) \diff \mu_u^N(r) \diff u } \,   \frac{R_0}{N} \, S(x_i) \, {L}(x_i,r) \diff \mu_s^N(r) \diff s.
\end{split}
\end{equation*}
Using support of $\mu^N_t$ we can write it as
\begin{equation*}
\begin{split}
&m_i^N(t) =  m_i^N(0)\,e^{-\int_0^t \sum_{j=1}^N {L}(x_i, x_j)\, m_j^N(s) \diff s }\,+\phantom{\frac{R_0}{N}}\\ &\qquad \qquad +  \int_0^t e^{-\int_0^{t-s} \sum_{j=1}^N {L}(x_i,x_j) \, m_j^N(u) \diff u } \,  \frac{R_0}{N} \, S(x_i) \, \sum_{j=1}^N {L}(x_i,x_j) \, m_j^N(s) \diff s.
\end{split}
\end{equation*}
This is precisely variation-of-constants formula for ODE \eqref{eq:ODE_num_scheme_INTRODUCTION}. \\

\noindent For the proof of \ref{discr_2}, we sum up equations \eqref{eq:ODE_num_scheme_INTRODUCTION} for $i=1, ..., N$ to obtain
\begin{equation*}
\begin{split}
\partial_t \sum_{i=1}^N \frac{m_i^N(t)}{x_i} &\leq \frac{R_0}{N} \, \sum_{i=1}^N \frac{4\pi x_i^2}{x_i} \sum_{j=1}^N {L}(x_i, x_j) \, m_j^N(t) =4\pi\, \frac{R_0}{N} \, \sum_{j=1}^N \sum_{i=1}^N 2\sigma^2 \mathds{1}_{|x_i - x_j| \leq \sigma} \, \frac{m_j^N(t)}{x_j},
\end{split}
\end{equation*}
where we used that $L(x_i, x_j) = \frac{\widetilde{L}(x_i, x_j)}{x_i\,x_j}$ is supported for $|x_i - x_j| \leq \sigma$ and that $\widetilde{L}(x_i, x_j) \leq 2\sigma^2$ cf. Lemma \ref{lem:prop_of_L} \ref{propB}, \ref{propD}. Fix $j \in \{1, ..., N\}$ and observe that condition $|x_i - x_j| \leq \sigma$ is equivalent to $|i - j| \leq \frac{N}{R_0}\,\sigma$. There are at most $2\,\frac{N}{R_0}\,\sigma + 1 \leq \frac{N}{R_0}\,(2\sigma + 1)$ points $i$ satisfying this condition. It follows that $\sum_{i=1}^N \mathds{1}_{|x_i - x_j| \leq \sigma} \leq \frac{N}{R_0}\,(2\sigma + 1)$ and this implies
$$
 \partial_t \sum_{i=1}^N \frac{m_i^N(t)}{x_i}  \leq 8\pi \sigma^2 \, \frac{R_0}{N} \, \frac{N}{R_0}\, (2\sigma + 1) \, \sum_{j=1}^N \frac{m_j^N(t)}{x_j} \leq C(\sigma) \, \sum_{j=1}^N \frac{m_j^N(t)}{x_j}.
$$
Application of Gronwall's inequality concludes the proof of the first estimate. To see the second one, we proceed in a similar manner, this time estimating
\begin{equation*}
\begin{split}
\partial_t \sum_{i=1}^N {m_i^N(t)} &\leq \frac{R_0}{N} \, \sum_{i=1}^N {4\pi x_i^2} \sum_{j=1}^N {L}(x_i, x_j) \, m_j^N(t) =4\pi\, \frac{R_0}{N} \, \sum_{j=1}^N \sum_{i=1}^N 2\sigma^2 \mathds{1}_{|x_i - x_j| \leq \sigma} \, {m_j^N(t)},
\end{split}
\end{equation*}
where we used $r^2 \, L(R,r) \leq C(\sigma)$ cf. Lemma \ref{lem:prop_of_L} \ref{propF}. We conclude in the same manner.\\

\noindent To prove \ref{discr_tail} we can repeat the proof of Theorem \ref{lem:tail_control_solution}. Indeed, the only problem one faces is to prove that $\int_{\R^+} \widetilde L(R,r) \diff \lambda_N(R) \leq C(\sigma)$ where $\lambda_N = \frac{R_0}{N} \sum_{i=1}^N \delta_{x_i}$ cf. \eqref{eq:estimate_tail_toreferindiscr}. However, we know that $\widetilde{L}(R,r)$ is supported for $|R-r| \leq \sigma$ and it is bounded with $C(\sigma)$ cf. Lemma \ref{lem:prop_of_L} \ref{propB}, \ref{propD}. For fixed $r$, there are at most $2\sigma/\left(\frac{N}{R_0}\right) + 1 \leq (2\sigma + 1) \, \frac{R_0}{N}$ points $x_i$ such that $|x_i - r| \leq \sigma$. Therefore,
$$
\int_{\R^+} \widetilde L(R,r) \diff \lambda_N(R) = 
\frac{R_0}{N} \sum_{i:|x_i-r|\leq \sigma} L(r,x_i) \leq C(\sigma) \, \frac{R_0}{N} \, (2\sigma + 1) \, \frac{R_0}{N} \leq C(\sigma).
$$

\noindent To prove \ref{discr_4}, we need to estimate $\int_{\R^+} \frac{\psi(R)}{R} \, \diff (\widetilde{\mu_t} - \mu_t^N)$ uniformly in $\psi \in BL(\R^+)$ with $\|\psi\|_{BL} \leq 1$. We let $\mathcal{E}(t) = e^{-\int_0^{t} \int_{\R^+} {L}(R,r) \diff \widetilde{\mu_u}(r) \diff u }$ and $\mathcal{E}^N(t) = e^{-\int_0^{t} \int_{\R^+} {L}(R,r) \diff \mu_u^N(r) \diff u}$. We have:
\begin{equation}\label{eq:proof_local_interactions_mu_t}
\begin{split}
\int_{\R^+} \frac{\psi(R)}{R} \diff\widetilde{\mu_t}(R) =& \int_{\R^+} \frac{\psi(R)}{R} \, \mathcal{E}(t) \diff\mu_0^N(R) \,+\\ & + \int_0^t \int_{\R^+} \int_{\R^+} \frac{\psi(R)}{R}\,  \mathcal{E}(t-s)\,   {L}(R,r) \, S(R) \diff \widetilde{\mu_s}(r) \diff \lambda(R) \diff s =: X_1 + Y_1.
\end{split}
\end{equation}
\begin{equation}\label{eq:proof_local_interactions_mu_t^N}
\begin{split}
\int_{\R^+} \frac{\psi(R)}{R} &\diff\mu_t^N(R) = \int_{\R^+} \frac{\psi(R)}{R} \, \mathcal{E}^N(t) \diff\mu_0^N(R) \,+\\ &  + \int_0^t \int_{\R^+} \int_{\R^+} \frac{\psi(R)}{R}\,  \mathcal{E}^N(t-s) \,   {L}(R,r) \, S(R) \diff \mu_s^N(r) \diff \lambda^N(R) \diff s =: X_2 + Y_2.
\end{split}
\end{equation}
We subtract these identities and study two terms independently. Before proceeding to computations, we note an auxiliary estimate that follows from \ref{E2} in Lemma \ref{lem:E_basic_estimates}:
\begin{equation}\label{auxillary_estimate_E} 
\left|\mathcal{E}(t) - \mathcal{E}^N(t)\right| \leq \frac{C(\sigma)}{R} \int_0^t \left\|\widetilde{\mu_s} - \mu_s^N \right\|_{BL^*,w} \diff s. 
\end{equation}

\noindent {\it Term with initial conditions $|X_1 - X_2|$.} We have $\| \psi \|_{\infty} \leq 1$ and $\| \mu_0^N\|_{TV} = \|\mu_0\|_{TV} \leq C_d$. Using \eqref{auxillary_estimate_E} we obtain
\begin{equation}\label{eq:approxestimate1_before}
|X_1 - X_2| \leq C(\sigma) \int_0^t \left\|\widetilde{\mu_s} - \mu_s^N \right\|_{BL^*,w} \diff s \, \left\| \frac{\mu_0^N}{R^2} \right\|_{TV}.
\end{equation}
We note that $ \left\| \frac{\mu_0^N}{R^2} \right\|_{TV} =  \left\| \frac{\mu_0}{R^2} \right\|_{TV}$ because with $A_1 = [0,x_1)$, $A_i = [x_{i-1},x_i)$ ($i = 2,..., n-1$) and $A_n = [x_{n-1}, x_n]$ we have
$$
\left\| \frac{\mu_0^N}{R^2} \right\|_{TV} = \sum_{i=1}^N \frac{\mu_0(A_i)}{x_i^2} \leq \int_{[0,R_0]} \frac{1}{R^2} \diff \mu_0(R) \leq \left\| \frac{\mu_0}{R^2} \right\|_{TV}
$$
therefore estimate \eqref{eq:approxestimate1_before} simplifies to
\begin{equation}\label{eq:approxestimate1}
|X_1 - X_2| \leq C_d \, \int_0^t \left\|\widetilde{\mu_s} - \mu_s^N \right\|_{BL^*,w} \diff s
\end{equation}
\noindent {\it Term with non-local interactions $|Y_1 - Y_2|$.} Aiming at triangle inequality, we write
\begin{align*}
    |Y_1 - Y_2| &\leq \int_0^t \int_{\R^+} \int_{\R^+} \frac{\psi(R)}{R}\, (\mathcal{E}(t-s) - \mathcal{E}^N(t-s))  \,   {L}(R,r) \, S(R) \diff \widetilde{\mu_s}(r) \diff \lambda(R) \diff s \\
     & + \int_0^t \int_{\R^+} \int_{\R^+} \frac{\psi(R)}{R}\,  \mathcal{E}^N(t-s)  \,   {L}(R,r) \, S(R) \diff \widetilde{\mu_s}(r) \diff (\lambda - \lambda_N)(R) \diff s \\
     & +  \int_0^t \int_{\R^+} \int_{\R^+} \frac{\psi(R)}{R}\,  \mathcal{E}^N(t-s)  \,   {L}(R,r) \, S(R) \diff (\widetilde{\mu_s} - \mu_s^N)(r) \diff \lambda_N(R) \diff s =: Z_1 + Z_2 + Z_3.
\end{align*}
\noindent {\it Term $Z_1$.} Note that $\|\psi\|_{\infty} \leq 1$ and $\| \widetilde{\mu_s} \|_{BL^*,w} \leq C_d$ due to estimate \eqref{eq:unif_bound_solutions}. As $S(R) = 4\pi R^2$, we use \eqref{auxillary_estimate_E} to obtain
\begin{equation}\label{eq:approxestimate2}
\begin{split}
Z_1 &\leq  4\pi \int_0^t \int_{\R^+} \left[\int_{\R^+}  \frac{\widetilde{L}(R,r)}{R} \diff \lambda(R)\right]\, \frac{\widetilde{\diff\mu_s}(r)}{r} \diff s \, \int_0^t \left\|\widetilde{\mu_s} - \mu_s^N \right\|_{BL^*,w} \diff s\\
&\leq  4\pi \,t \, C(\sigma) \, \sup_{s \in [0,t]} \| \mu_s\|_{BL^*,w}\, \int_0^t \left\|\widetilde{\mu_s} - \mu_s^N \right\|_{BL^*,w} \diff s \leq C_d \int_0^t \left\|\widetilde{\mu_s} - \mu_s^N \right\|_{BL^*,w} \diff s. 
\end{split}
\end{equation}
where in the second estimate we used Lemma \ref{lem:prop_of_L} \ref{propH} to bound integral $\int_{\R^+}  \frac{\widetilde{L}(R,r)}{R} \diff \lambda(R)$.\\

\noindent {\it Term $Z_2$.} First, we write
$$
Z_2 = 4\pi \int_0^t \int_{\R^+}  {\psi(R)}\,  \mathcal{E}^N(t-s) \left[\int_{\R^+} \widetilde{L}(R,r) \, \frac{\diff \widetilde{\mu_s}(r)}{r} \right] \diff (\lambda - \lambda_N)(R) \diff s
$$
We split integral with respect to $R$ for two subsets $R \in [0, R_0]$ and $R > R_0$ and denote the resulting integrals with $Z_2^{(1)}$ and $Z_2^{(2)}$. For $Z_2^{(1)}$, we consider the map
\begin{equation}\label{eq:functionG_lemma}
R \mapsto \mathcal{G}(R):= {\psi(R)}\,  \mathcal{E}^N(t-s) \left[\int_{\R^+} \widetilde{L}(R,r) \, \frac{\diff \widetilde{\mu_s}(r)}{r} \right].
\end{equation}
and Lemma \ref{lema:auxillary_computation_GlambdaN} below shows that it belongs to $BL\left[0, R_0 \right]$ with constant $C_d$. Thanks to Remark \ref{cor_approximation_lebesgue_measure} concerning discretisation of Lebesgue measure, we obtain
\begin{equation}\label{eq:estimateonZ22}
Z_2^{(1)} \leq C_d\, 
\left\| \lambda - \lambda_N  \right\|_{BL^*\left[0, R_0 \right]} \leq C_d \, \frac{R_0^2}{N}.
\end{equation}

\noindent To estimate $Z_2^{(2)}$ we observe that $L(R,r)$ is supported only for $|R-r|\leq \sigma$ cf. Lemma \ref{lem:prop_of_L} \ref{propD} so that we may restrict integral with respect to $r$ to the case $r \geq R_0 - \sigma$. Hence, 
$$
Z_2^{(2)} = 4 \pi \int_0^t \int_{r \geq R_0-\sigma} \left[\int_{R \geq R_0} \psi(R)\,  \mathcal{E}^N(t-s)  \,   \widetilde{L}(R,r) \diff \lambda(R) \right] \frac{\diff \widetilde{\mu_s}(r)}{r}  \diff s.
$$
The inner integral is controlled with $C(\sigma)$ due to Lemma \ref{lem:prop_of_L} \ref{propI}. For the outer one we use estimate on the tail from Theorem \ref{lem:tail_control_solution}, namely
\begin{equation}\label{eq:approxestimate4}
Z_2^{(2)} \leq \frac{C_d}{M(R_0-\sigma)} \int_0^t \int_{r \geq R_0-\sigma} M(r) \diff \widetilde{\mu_s}(r) \diff s \leq \frac{C_d\,C_{M}(\sigma)}{M(R_0)} \, \int_0^t C_M[\mu_s] \diff s \leq \frac{C_d}{M(R_0)}.
\end{equation}
where we used that $\frac{M(R_0)}{M(R_0 - \sigma)} \leq C_M(\sigma)$.

\noindent {\it Term $Z_3$.} First, we write $\frac{1}{R} L(R,r) S(R) = 4\pi \frac{\widetilde{L}(R,r)}{r}$. Hence, 
$$
Z_3 = 4\pi \int_0^t \int_{\R^+} \left[\int_{\R^+} \psi(R)\,  \mathcal{E}^N(t-s)  \,   \widetilde{L}(R,r) \, \diff \lambda_N(R)\right] \frac{\diff (\widetilde{\mu_s} - \mu_s^N)(r)}{r}  \diff s.
$$ 
We observe that the function 
\begin{equation}\label{eq:mapF_newlemma}
r \mapsto \mathcal{F}(r) := \int_{\R^+} \psi(R)\,  \mathcal{E}^N(t-s)  \,   \widetilde{L}(R,r) \, \diff \lambda_N(R)
\end{equation}
is in $BL(\R^+)$ with constant $C(\sigma)$ by Lemma \ref{lema:auxillary_computation_lambdaN} below. It follows that
\begin{equation}\label{eq:approxestimate5}
Z_3 \leq C(\sigma) \int_0^t \left\|\widetilde{\mu_s} - \mu_s^N \right\|_{BL^*,w} \diff s.
\end{equation}

\noindent {\it Conclusion.} Collecting estimates \eqref{eq:approxestimate1}, \eqref{eq:approxestimate2}, \eqref{eq:estimateonZ22}, \eqref{eq:approxestimate4} and \eqref{eq:approxestimate5} we obtain
\begin{equation*}
    \left\| \widetilde{\mu_t} - \mu_t^N \right\|_{BL^*,w} \leq   C_d \int_0^t  \left\| \widetilde{\mu_s} - \mu_s^N \right\|_{BL^*,w} \diff s \,+ \frac{C_d\,R_0^2}{N} +  \frac{C_d}{M(R_0)}.
\end{equation*}
Application of Gronwall concludes the proof of \ref{discr_4}.\\
\end{proof}

\begin{lemma}\label{lema:auxillary_computation_GlambdaN}
Let $\|\psi\|_{BL}\leq 1$. Under notation of the proof of Theorem \ref{thm:removing_local_interactions}, the function defined in \eqref{eq:functionG_lemma}
belongs to $BL\left[ 0, R_0 \right]$ with constant $C_d$.
\end{lemma}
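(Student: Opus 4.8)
The plan is to verify the two defining quantities of the $BL[0,R_0]$ norm directly, relying on Rademacher's theorem (Lemma~\ref{lem:prod_BL_bel}~(D)) to reduce the Lipschitz seminorm to an $L^\infty$ bound on the $R$-derivative. Note first that $\mathcal{G}$ is genuinely Lipschitz on $[0,R_0]$: it is a product of $\psi$, of $R\mapsto\mathcal{E}^N(t-s)$ (a composition of the $1$-Lipschitz map $x\mapsto e^{-x}$, $x\geq 0$, with an integral of $L$), and of the map $H(R):=\int_{\R^+}\widetilde{L}(R,r)\,\frac{\diff\widetilde{\mu_s}(r)}{r}$, each of which is Lipschitz because $\widetilde{L}$ is Lipschitz in $R$ by Lemma~\ref{lem:prop_of_L}~\ref{propB}. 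Thus it suffices to bound $\|\mathcal{G}\|_\infty$ and $\|\partial_R\mathcal{G}\|_\infty$ by $C_d$.

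For the sup bound, and more importantly for the delicate derivative estimate, I would record two properties of $H$. Since $\widetilde{L}(R,r)\leq 2\sigma^2$ (Lemma~\ref{lem:prop_of_L}~\ref{propB}) and $\int_{\R^+}\frac1r\diff\widetilde{\mu_s}(r)\leq\|\widetilde{\mu_s}\|_{BL^*,w}\leq C_d$ by \eqref{eq:unif_bound_solutions}, we get the uniform bound $|H(R)|\leq C_d$; combined with $|\psi|\leq 1$ and $\mathcal{E}^N(t-s)\in[0,1]$ (Lemma~\ref{lem:E_basic_estimates}~\ref{E1}) this already yields $\|\mathcal{G}\|_\infty\leq C_d$. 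The second, crucial property is the \emph{linear vanishing at the origin}: Lemma~\ref{lem:prop_of_L}~\ref{propE} gives $\widetilde{L}(R,r)\leq 8\sigma R$, hence $|H(R)|\leq C_d\,R$.

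Differentiating the product gives
$$
\partial_R\mathcal{G} = \psi'(R)\,\mathcal{E}^N(t-s)\,H(R) + \psi(R)\,\partial_R\mathcal{E}^N(t-s)\,H(R) + \psi(R)\,\mathcal{E}^N(t-s)\,\partial_R H(R).
$$
The first and third terms are routine. In the first, $|\psi'|\leq 1$ while the other two factors are bounded by the estimates just recorded, so it is $\leq C_d$. In the third, $\partial_R H(R)=\int_{\R^+}\partial_R\widetilde{L}(R,r)\,\frac{\diff\widetilde{\mu_s}(r)}{r}$ and $|\partial_R\widetilde{L}|\leq 4\sigma$ (Lemma~\ref{lem:prop_of_L}~\ref{propB}), so $|\partial_R H(R)|\leq 4\sigma\,\|\widetilde{\mu_s}\|_{BL^*,w}\leq C_d$, making this term $\leq C_d$ as well.

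The hard part will be the middle term $\psi\,\partial_R\mathcal{E}^N(t-s)\,H$, because $\partial_R\mathcal{E}^N$ carries a $1/R$ singularity. Writing $\partial_R\mathcal{E}^N(t-s)=-\mathcal{E}^N(t-s)\int_0^{t-s}\int_{\R^+}\partial_R L(R,r)\diff\mu_u^N(r)\diff u$ and applying the key inequality Lemma~\ref{lem:prop_of_L}~\ref{propC}, I would bound
$$
\left|\partial_R\mathcal{E}^N(t-s)\right| \leq \frac{\mathcal{E}^N(t-s)}{R}\int_0^{t-s}\int_{\R^+}\left(\frac{2\sigma}{r}+L(R,r)\right)\diff\mu_u^N(r)\diff u.
$$
The $2\sigma/r$ contribution is $\leq C_d/R$ since $\int_{\R^+}\frac1r\diff\mu_u^N\leq\|\mu_u^N\|_{BL^*,w}\leq C_d$ (assertion~\ref{discr_2}), while the $L$ contribution equals $\frac1R\,X\,e^{-X}$ with $X:=\int_0^{t-s}\int_{\R^+}L(R,r)\diff\mu_u^N(r)\diff u\geq 0$, which is $\leq\frac1R\sup_{x\geq0}x\,e^{-x}=\frac{1}{eR}$ — exactly the $x\,e^{-x}$ trick used in Lemma~\ref{lem:E_basic_estimates}~\ref{E4}. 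Hence $|\partial_R\mathcal{E}^N(t-s)|\leq C_d/R$, and multiplying by the linear bound $|H(R)|\leq C_d\,R$ cancels the singularity to give $|\psi\,\partial_R\mathcal{E}^N\,H|\leq C_d$. Summing the three contributions yields $\|\partial_R\mathcal{G}\|_\infty\leq C_d$, and with the sup bound this gives $\mathcal{G}\in BL[0,R_0]$ with $\|\mathcal{G}\|_{BL}\leq C_d$, as claimed.
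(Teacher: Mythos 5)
Your proof is correct and follows essentially the same route as the paper's: bound $\|\mathcal{G}\|_\infty$, apply Rademacher's theorem, split $\partial_R\mathcal{G}$ by the product rule into the same three terms, and resolve the delicate middle term by cancelling the $1/R$ singularity of $\partial_R\mathcal{E}^N$ (via Lemma~\ref{lem:prop_of_L}~\ref{propC}) against the linear vanishing of $H(R)$ at the origin (via \ref{propE}). The only cosmetic difference is that for the $L$-contribution inside $\partial_R\mathcal{E}^N$ you invoke the $x\,e^{-x}$ bound, whereas the paper simply estimates $L(R,r)\leq 8\sigma/r$ by \ref{propE} and absorbs it into $\|\mu^N_u\|_{BL^*,w}$; both are valid.
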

\begin{proof}
First, function $\mathcal{G}$ is bounded with 
\begin{equation}\label{eq:estimate_infinity_G}
|\mathcal{G}(R)| \leq \| \psi \|_{\infty} \, \left| \mathcal{E}^N(t-s) \right| \, \|\widetilde{L}\|_{\infty} \, \sup_{s \in [0,t]} \left\| \widetilde{\mu_s}\right\|_{BL^*,w} \leq C(\sigma) \, C_d \leq C_d
\end{equation}
where we used $\| \psi \|_{\infty} \leq 1$, Lemma \ref{lem:E_basic_estimates} \ref{E1}, Lemma \ref{lem:prop_of_L} \ref{propB} and estimate \eqref{eq:unif_bound_solutions}. Aiming at application of Radamacher's theorem cf. Lemma \ref{lem:prod_BL_bel} (D), we compute its derivative and we want to prove that it is uniformly bounded. We have:
\begin{align*}
\partial_R \mathcal{G}(R) = \, & {\partial_R \psi(R)}\,  \mathcal{E}^N(t-s) \left[\int_{\R^+} \widetilde{L}(R,r) \, \frac{\diff \widetilde{\mu_s}(r)}{r} \right] + { \psi(R)}\,  \partial_R\mathcal{E}^N(t-s) \left[\int_{\R^+} \widetilde{L}(R,r) \, \frac{\diff \widetilde{\mu_s}(r)}{r} \right] \\
&+  { \psi(R)}\, \mathcal{E}^N(t-s) \left[\int_{\R^+}  \partial_R\widetilde{L}(R,r) \, \frac{\diff \widetilde{\mu_s}(r)}{r} \right] =: (A) + (B) + (C).
\end{align*}
Term $(A)$ is bounded by virtue of \eqref{eq:estimate_infinity_G} because $\|\psi\|_{BL} \leq 1$ implies $\|\partial_R \psi\|_{\infty} \leq 1$. For term $(C)$ we observe that Lemma \ref{lem:prop_of_L} \ref{propB} implies that $\left| \partial_R\widetilde{L}(R,r)\right| \leq C(\sigma)$ so that 
$$
|(C)| \leq C(\sigma)\, \sup_{s \in [0,t]} \left\| \widetilde{\mu_s}\right\|_{BL^*,w} \leq C_d.
$$

\noindent The most difficult part of the proof is analysis of term $(B)$. We observe that
$$
\partial_R \mathcal{E}^N(t-s) = \mathcal{E}^N(t-s) \,  \int_0^{t-s} \int_{\R^+} \partial_R L(R,r) \diff \mu_u^N(r) \diff u.
$$
Moreover, $| \mathcal{E}^N(t-s)| \leq 1$. Hence, we have
\begin{align*}
|(B)| &= { \psi(R)}\, \mathcal{E}^N(t-s) \, \left|\int_0^{t-s} \int_{\R^+} \partial_R L(R,r) \diff \mu_u^N(r) \diff u \right| \, \left[\int_{\R^+} \widetilde{L}(R,r) \, \frac{\diff \widetilde{\mu_s}(r)}{r} \right] \\
&\leq { \psi(R)}\, \left[\int_0^t \int_{\R^+} R\, \left| \partial_R L(R,r) \right| \diff \mu_u^N(r) \diff u \right] \, \left[\int_{\R^+} \frac{\widetilde{L}(R,r)}{R} \, \frac{\diff \widetilde{\mu_s}(r)}{r} \right]
\end{align*}
Now, we use \ref{propC} in Lemma \ref{lem:prop_of_L} to estimate $R\, \left| \partial_R L(R,r)\right|$:
$$
|(B)| \leq { \psi(R)}\, \left[\int_0^t \int_{\R^+} \left(\frac{2\sigma}{r} + L(R,r)\right) \diff \mu_u^N(r) \diff u \right] \, \left[\int_{\R^+} \frac{\widetilde{L}(R,r)}{R} \, \frac{\diff \widetilde{\mu_s}(r)}{r} \right] 
$$
Finally, we apply \ref{propE} in the Lemma \ref{lem:prop_of_L} to estimate $\frac{\widetilde{L}(R,r)}{R} \leq C(\sigma)$ and estimate on $\|\mu_s^N\|_{BL^*,w}$ from Theorem \ref{thm:removing_local_interactions} \ref{discr_2} to obtain
\begin{align*}
|(B)| &\leq C_d \,  \left[\int_0^t \int_{\R^+} \left(\frac{2\sigma}{r} + \frac{\widetilde{L}(R,r)}{R\,r}\right) \diff \mu_u^N(r) \diff u \right] \leq  C_d \,\int_0^t \int_{\R^+} \frac{\diff \mu_s^N(r)}{r} \leq C_d.
\end{align*}
\end{proof}

\begin{lemma}\label{lema:auxillary_computation_lambdaN}
Let $\|\psi\|_{BL}\leq 1$. Under notation of the proof of Theorem \ref{thm:removing_local_interactions}, the function defined in \eqref{eq:mapF_newlemma}
belongs to $BL(\R^+)$ with constant $C(\sigma)$.
\end{lemma}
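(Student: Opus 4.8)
The plan is to check the two pieces of the $BL(\R^+)$ norm of $\mathcal{F}$ as a function of $r$, namely the sup bound and the Lipschitz seminorm, and to exploit a structural simplification that makes this lemma lighter than Lemma \ref{lema:auxillary_computation_GlambdaN}: here the exponential weight $\mathcal{E}^N(t-s)$ does not depend on the variable $r$ in which we test Lipschitz regularity.

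First I would dispose of boundedness. Since $\|\psi\|_{\infty} \leq 1$ and $0 \leq \mathcal{E}^N(t-s) \leq 1$ by Lemma \ref{lem:E_basic_estimates} \ref{E1}, we have
$$
|\mathcal{F}(r)| \leq \int_{\R^+} \widetilde{L}(R,r) \diff \lambda_N(R),
$$
and this integral was already bounded by $C(\sigma)$ in the proof of Theorem \ref{thm:removing_local_interactions} \ref{discr_tail}: $\widetilde{L}(\cdot,r)$ vanishes outside $\{|R-r|\leq\sigma\}$ (Lemma \ref{lem:prop_of_L} \ref{propD}) and is bounded by $2\sigma^2$ (Lemma \ref{lem:prop_of_L} \ref{propB}), while at most $(2\sigma+1)\frac{N}{R_0}$ knots $x_i$ fall in that set, so that the weight $\frac{R_0}{N}$ carried by $\lambda_N$ reduces the count to a constant $C(\sigma)$ (using $R_0 \leq N$).

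The key observation for Lipschitz continuity is that $\mathcal{E}^N(t-s)$ depends on $R$ and on $t-s$ through the integrand $L(R,\cdot)$, but is independent of the outer variable $r$. Writing $\lambda_N$ as a finite sum of Dirac masses, $\mathcal{F}$ is a finite combination of the maps $r \mapsto \widetilde{L}(x_i,r)$, each Lipschitz, so $\mathcal{F}$ is Lipschitz and, by Rademacher's theorem (Lemma \ref{lem:prod_BL_bel} (D)), its almost-everywhere derivative is simply
$$
\partial_r \mathcal{F}(r) = \int_{\R^+} \psi(R)\, \mathcal{E}^N(t-s)\, \partial_r \widetilde{L}(R,r) \diff \lambda_N(R).
$$
In contrast to Lemma \ref{lema:auxillary_computation_GlambdaN}, no term carrying a derivative of $\mathcal{E}^N$ arises, and it is precisely that term which forced the delicate estimate there.

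It then remains to bound the derivative. As $\widetilde{L}(R,r) = \widetilde{L}(r,R)$ is symmetric, Lemma \ref{lem:prop_of_L} \ref{propB} gives $|\partial_r \widetilde{L}(R,r)| \leq 4\sigma$, and $\partial_r \widetilde{L}(\cdot,r)$ is supported in $\{|R-r|\leq\sigma\}$. Recycling the counting argument from the proof of \ref{discr_tail}, at most $(2\sigma+1)\frac{N}{R_0}$ knots lie within distance $\sigma$ of $r$, whence
$$
|\partial_r \mathcal{F}(r)| \leq \frac{R_0}{N}\cdot 4\sigma \cdot (2\sigma+1)\frac{N}{R_0} = C(\sigma),
$$
so the Lipschitz seminorm of $\mathcal{F}$ is at most $C(\sigma)$. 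Together with the sup bound this yields $\|\mathcal{F}\|_{BL} \leq C(\sigma)$. I do not anticipate a genuine obstacle; the only point to get right is the independence of $\mathcal{E}^N$ from $r$, which both removes the troublesome derivative term and lets the discrete counting estimate used for \ref{discr_tail} be reused verbatim.
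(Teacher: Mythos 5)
Your proof is correct and follows essentially the same route as the paper: both arguments rest on the independence of $\mathcal{E}^N(t-s)$ from $r$, the bound $|\widetilde{L}|\leq 2\sigma^2$ and the Lipschitz bound $|\partial_r\widetilde{L}|\leq 4\sigma$ from Lemma \ref{lem:prop_of_L} \ref{propB}, the support condition \ref{propD}, and the counting of at most $(2\sigma+1)\tfrac{N}{R_0}$ knots within distance $\sigma$ of $r$ (the paper estimates the difference quotient directly, while you pass through Rademacher's theorem and bound the a.e.\ derivative, which is an immaterial difference).
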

\begin{proof}
Recall that ${L}$ is supported for $|R-r|\leq \sigma$ and $|L| \leq 2\sigma^2$. Hence, when $r$ is fixed, there are at most $2\sigma/(R_0/N) + 1= 2\sigma \frac{N}{R_0} + 1 \leq (2\sigma + 1) \frac{N}{R_0}$ points $x_i = \frac{i}{N}R_0$ in the interval $|R-r|\leq \sigma$. It follows that
$$
|\mathcal{F}(r)| \leq 2\sigma^2\|\psi\|_{\infty} \,  \sum_{i=1}^N \frac{R_0}{N} \,  \mathds{1}_{|x_i - r|} \leq 2\sigma^2\,(2\sigma + 1).
$$
Similarly, as $\psi(R)\,  \mathcal{E}^N(t-s)$ does not depend on $r$, $|\psi(R)\,  \mathcal{E}^N(t-s)| \leq 1$ and the map $r\mapsto \widetilde{L}(R,r)$ is in $BL(\R^+)$ with constant $C(\sigma)$ cf. Lemma \ref{lem:prop_of_L} \ref{propB}, we have for some $r_1$ and $r_2$:
\begin{align*}
|\mathcal{F}(r_1) - \mathcal{F}(r_2)| &= \left| \int_{\R^+} \psi(R)\,  \mathcal{E}^N(t-s)  \,   \left(\widetilde{L}(R,r_1) - \widetilde{L}(R,r_2)\right) \diff \lambda_N(R) \right|\\
&\leq \sum_{i=1}^N \frac{R_0}{N} \,C(\sigma) \,\left(  \mathds{1}_{|x_i - r_1|\leq\sigma} + \mathds{1}_{|x_i - r_2| \leq \sigma}\right) |r_1 - r_2| \leq 2\,(2\sigma + 1) \, C(\sigma) \, |r_1 - r_2|.
\end{align*}
\end{proof}


\section{Convergence result: proof of Theorem \ref{thm:main_res_convergence}}\label{sect:main_convergence}


\noindent Let $\mu_0$ be a measure generated by density $p(r,0) = 4\pi r^2 \, n_0(r)$, i.e. $\mu_0(A) = \int_{A} p(r,0) \diff r$. Then, $\mu_0$ satisfies Assumption \ref{ass:decay} and assumption of Theorem \ref{thm:well-posed_flat_metric_radial} because $n_0$ is bounded and compactly supported cf. Remark \ref{rem:ass_initial_condition_theorem}. In what follows,
\begin{itemize}
    \item $\mu_t$ is the unique measure solution to \eqref{eq:transform_coordinates_INTRODUCTION} with initial condition $\mu_0$,
    \item $\mu_{t}^N = \sum_{i=1}^N m_i(t) \, \delta_{x_i}$ is the solution to the numerical scheme \eqref{eq:ODE_num_scheme_INTRODUCTION} with initial condition $\mu_0^N = \sum_{i=1}^N m_i(0) \, \delta_{x_i}$,
    \item $\widetilde{\mu_t}$ is the unique measure solution to \eqref{eq:transform_coordinates_INTRODUCTION} with initial condition $\mu_0^N$.
\end{itemize}
According to Theorem \ref{thm:well-posed_flat_metric_radial} we have
$$
\left\|\mu_t - \widetilde{\mu_t} \right\|_{BL^*,w} \leq C \, \left\|\frac{\mu_0 - \mu_0^{N}}{r\, \minT(1,r)}\right\|_{BL^*}
$$
for some constant $C$ depending on data and initial condition $\mu_0$. Now, we observe that if $n_0$ is supported on $[0,\gamma]$ then 
$$
\int_{\R^+} \frac{p_0(r)}{r\,\minT(1,r)} \diff r =  \int_{\R^+} \frac{4\pi r^2 \, n_0(r)}{r\,\minT(1,r)} \diff r \leq 
\| n_0\|_{\infty} \, \gamma \, \mbox{max}(1,\gamma) < \infty,
$$
where we applied estimate $\frac{r}{\minT(1,r)} \leq \mbox{max}(1,r)$. Hence, we may apply Lemma \ref{lem:approx_distr} with $f(r) = r\,\minT(1,r)$ to obtain
\begin{equation}\label{eq:final_proof_first_estimate}
\left\|\mu_t - \widetilde{\mu_t} \right\|_{BL^*,w} \leq C \, \left\|\frac{\mu_0 - \mu_0^{N}}{r\, \minT(1,r)}\right\|_{BL^*} \leq C\, \frac{R_0}{N}.
\end{equation}
Moreover, directly from Theorem \ref{thm:removing_local_interactions} \ref{discr_4} we obtain
\begin{equation}\label{eq:final_proof_second_estimate}
    \left\| \widetilde{\mu_t} - \mu_t^N \right\|_{BL^*,w} \leq  C\,\frac{R_0^2}{N} +  C\, e^{-R_0}.
\end{equation}
where we chose $M(R) = e^{-R}$ for simplicity (in fact, any $M$ from Remark \ref{rem:complicated_cond_on_M} can be chosen as initial condition is compactly supported). Combining \eqref{eq:final_proof_first_estimate} and \eqref{eq:final_proof_second_estimate} we deduce
$$
\left\|\mu_t - {\mu_t^N} \right\|_{BL^*,w} \leq C\, \frac{R_0}{N} + C\,\frac{R_0^2}{N} +  C\, e^{-R_0} \leq C\,\frac{R_0^2}{N} +  C\, e^{-R_0}
$$
because $R_0$ is large (say $R_0 \geq 1$). This concludes the proof.


\section{Comment on 2D case and consequences on convergence}\label{sect:2Dcase}


\noindent In this section we briefly discuss the case $d=2$. This time, the change of variables yields the following result.
\begin{thm}[Radial change of coordinates in 2D]\label{thm2:radial_change_in2D}
Let $d=2$ and let $n(x,t)$ be the solution to \eqref{non-local_proliferation} with radially symmetric initial condition $n_0(x)$. Then the radial density $p(R,t)$ defined with
$$
    p(R,t) = 2\pi R\, n((0,0,R),t), \qquad \qquad p_0(R) = 2\pi R\, n_0((0,0,R)).
$$
satisfies equation
\begin{equation}\label{non-local_proliferation_polar_2D}
\partial _t p(R,t) \  =\  \left( 2\pi R \ -\  p(R,t) \right) \,  \int_0^\infty L(R,r)\, p(r,t)  \diff r
\end{equation}
where the radial interaction kernel $L$ is given by
\begin{equation}\label{eq:defL_2d}
L(R,r) =  \frac{1}{\pi^2 \sigma^2} \, \left[\frac{\pi}{2} - \arcsin \max \left(\frac{R^2+r^2-\sigma^2}{2Rr}, -1\right)  \right] \, \mathds{1}_{|R-r|\leq \sigma}.
\end{equation}
\end{thm}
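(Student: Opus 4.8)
The plan is to follow the proof of Theorem~\ref{thm1:radial_change} essentially verbatim, replacing the three-dimensional spherical substitution by the two-dimensional polar one; the only genuinely new computation is the angular integral, which no longer collapses to an antiderivative of $K$ and is instead responsible for the $\arcsin$ term. First I would record that in $d=2$ the kernel is $K(|x|)=k(x)$ with $K(s)=\frac{1}{\pi\sigma^2}\,\mathds{1}_{s\leq\sigma}$, since $|B_\sigma(0)|=\pi\sigma^2$. As in the three-dimensional case, radial symmetry of $n_0$ together with the absence of spatial derivatives in \eqref{non-local_proliferation} forces $n(\cdot,t)$, and hence $k\ast n(\cdot,t)$, to remain radial, so it suffices to evaluate $k\ast n$ at the point $(0,R)$ lying at distance $R$ from the origin and to use $n(y,t)=p(|y|,t)/(2\pi|y|)$.

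Next I would pass to polar coordinates $y_1=r\cos\theta$, $y_2=r\sin\theta$ (Jacobian $r$), under which the squared distance becomes $|(0,R)-y|^2=r^2+R^2-2Rr\sin\theta$. The factor $r$ from the Jacobian cancels the $1/r$ from the density $p(r,t)/(2\pi r)$, so that
\begin{equation*}
k\ast n((0,R),t)=\frac{1}{2\pi}\int_0^\infty\left[\int_0^{2\pi} K\Big(\sqrt{r^2+R^2-2Rr\sin\theta}\Big)\,\diff\theta\right] p(r,t)\,\diff r.
\end{equation*}
Writing $c=c(R,r):=\frac{R^2+r^2-\sigma^2}{2Rr}$, the inner integrand equals $\frac{1}{\pi\sigma^2}\,\mathds{1}_{\{\sin\theta\geq c\}}$, so everything reduces to computing the measure of $\{\theta\in[0,2\pi):\sin\theta\geq c\}$.

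The core step is the elementary but case-sensitive identity
\begin{equation*}
\left|\{\theta\in[0,2\pi):\sin\theta\geq c\}\right|=\pi-2\arcsin c,\qquad c\in[-1,1],
\end{equation*}
which I would verify by a routine length computation for the superlevel set of $\sin$ (for instance $c=0$ gives the half-period of length $\pi$). The two boundary cases must be tracked carefully: $c>1$ is equivalent to $|R-r|>\sigma$ and yields the empty set, which is exactly encoded by the factor $\mathds{1}_{|R-r|\leq\sigma}$; whereas $c<-1$ is equivalent to $R+r<\sigma$ and yields the full measure $2\pi$, which is recovered by clamping, since $\arcsin\max(c,-1)=\arcsin(-1)=-\tfrac{\pi}{2}$ gives $\pi-2\arcsin(-1)=2\pi$. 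Substituting $\pi-2\arcsin\max(c,-1)=2\big(\tfrac{\pi}{2}-\arcsin\max(c,-1)\big)$ and collecting the prefactors $\frac{1}{2\pi}\cdot\frac{1}{\pi\sigma^2}\cdot 2=\frac{1}{\pi^2\sigma^2}$ yields precisely $k\ast n((0,R),t)=\int_0^\infty L(R,r)\,p(r,t)\,\diff r$ with $L$ as in \eqref{eq:defL_2d}. Finally, inserting $n((0,R),t)=p(R,t)/(2\pi R)$ into \eqref{non-local_proliferation} and multiplying by $2\pi R$ converts the reaction factor $(1-n)$ into $(2\pi R-p)$ and produces \eqref{non-local_proliferation_polar_2D}.

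The main obstacle is conceptual rather than technical. In three dimensions the $\cos\beta$ factor in the spherical Jacobian made the substitution $u=r^2+R^2-2Rr\sin\beta$ exact, turning the angular integral into the antiderivative $L$ of $K$ and hence into a piecewise-polynomial, Lipschitz kernel. In two dimensions the polar Jacobian carries no such factor, so the same substitution leaves a $\cos\theta=\sqrt{1-\sin^2\theta}$ in the denominator; it is exactly this square root that forces the $\arcsin$ and, with it, the mere $1/2$-H\"older regularity of $L$ away from $R,r=0$ that is exploited in Section~\ref{sect:2Dcase}.
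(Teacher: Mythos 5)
Your proof is correct and follows essentially the same route as the paper's: radial symmetry reduces everything to evaluating $k\ast n$ at a point at distance $R$, the polar Jacobian $r$ cancels the $1/(2\pi r)$ from the density, and the whole computation reduces to the angular integral, with the prefactors and the boundary cases $|R-r|>\sigma$ and $R+r<\sigma$ handled exactly as in \eqref{eq:defL_2d}. The only difference is how that angular integral is evaluated: you compute it directly as the measure of the superlevel set $\{\theta:\sin\theta\geq c\}=\pi-2\arcsin c$ (using from the outset that $K$ is an indicator), whereas the paper performs the successive substitutions $u=r^2+R^2-2Rr\sin\alpha$ and $w=(R^2+r^2-u)/(2Rr)$ and obtains the $\arcsin$ as the antiderivative of $1/\sqrt{1-w^2}$ — your version is slightly more elementary and avoids the singular intermediate integrand, but both yield the same kernel.
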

\noindent Proof of Theorem \ref{thm2:radial_change_in2D} is deferred to the end of this section. Here, we briefly discuss how to adapt three dimensional proof to the two dimensional case. \\

\noindent First, the resulting kernel $L(R,r)$ cannot be expected to be Lipschitz continuous, even away of $R,r =0$, because function $x \mapsto \arcsin(x)$ is not Lipschitz (its derivative blows up at $x = \pm 1$). However, it is well-known that $\arcsin(x)$ is $1/2$-H\"older continuous. In another words, it is Lipschitz continuous with respect to the metric $d_{1/2}(x,y) = |x-y|^{1/2}$.\\

\noindent To exploit this fact we use the theory of measure spaces on general metric spaces cf. \cite{our_book_ACPJ}. Mimicking definitions \eqref{eq:def_BL}--\eqref{eq:def_BLnorm}, we define
\begin{equation*}
	BL_{1/2}(\R^+)=\left\{f:\R^+ \to \R \mbox{ is continuous and } \|f\|_{\infty}<\infty, |f|_{1/2}<\infty\right\},
\end{equation*}
 where  
 \begin{equation}
 \|f\|_{\infty}=\underset{x\in \R^+}{\sup}\,|f(x)|, \qquad \qquad |f|_{1/2}=\underset { x\neq y}{\sup}\, \frac {|f(x)-f(y)|}{|x-y|^{1/2}}.
 \end{equation}
Space $BL_{1/2}(\R^+)$ is equipped with the norm 
\begin{equation}
\|f\|_{BL,\,1/2} = \max\left(\|f\|_{\infty}, \, |f|_{1/2}\right) \leq \|f\|_{\infty} + |f|_{{1/2}}.
\end{equation}
Similarly as in \eqref{defeq:flatnorm_weighted}, we define weighted flat norm:
\begin{equation}\label{defeq:flatnorm_weighted_2D}
		\|\mu\|_{BL^*,1/2,w}:= \sup\left\{\int_{\R^+} \! \frac{\psi(r)}{\sqrt{r}}  \,\mathrm{d}\mu(r) : \psi \in BL_{1/2}(\R^+), \|\psi\|_{BL,\,1/2} \leq 1\right\}.
\end{equation}
This time we scale with $\sqrt{r}$ rather than $r$ because function $r \mapsto \sqrt{r}$ is 1/2-H\"older continuous so it will interplay nicely with test functions in $BL_{1/2}(\R^+)$.\\

\noindent Note that the distance between two Dirac masses in the numerical scheme equals $R_0/N$. By virtue of Lemma \ref{lem:approx_distr}, we see that arbitrary measure can be also approximated with finite combinations of Dirac masses with respect to $\|\cdot\|_{BL^*,1/2,w}$ norm with an error of size ${R_0^{1/2}}/{N^{1/2}}$. Therefore, in two dimensions we should expect that the methods of this paper yields estimate
$$
\left\|p(\cdot,t) -  \mu_{t}^N \right\|_{BL^*,1/2,w} \leq C\,\frac{R_0}{\sqrt{N}} +  C\, e^{-R_0},
$$
contrary to \eqref{eq:error_in_main_res}. This is indeed verified in the numerical simulations in Section \ref{sect:sim_res}.

\begin{proof}[Proof of Theorem \ref{thm2:radial_change_in2D}]
The kernel is given by 
\begin{equation}\label{kernel2D}
K(r)=\frac{1}{\pi\,\sigma^{2}}\,\mathds{1}_{\left[0,\sigma\right]}(r)
\end{equation}
Similarly to the 3D case, we let $p(R,t) = 2\pi R\,n(x,t)$, where $R = | x | = (x_1^2 + x_2^2 )^{ 1/2}$. The convolution $k*n$ being also a radially symmetric function given by 
\begin{equation}\label{splot1_2D}
\begin{split}
&k*n(x,t) = k*n((0,R),t) = \\ 
&\qquad=\int_{\R^2}K\left(\left((0-y_1)^2+(R-y_2)^2\right)^{1/2}\right)n((y_1,y_2),t) \diff y  \\ 
&\qquad=\int_{\R^2}K\left(\left(y_1^2+y_2^2+R^2-2\,R\,y_2\right)^{1/2}\right)p\left(\left(y_1^2+y_2^2\right)^{1/2},t\right)\, \frac{1}{2\pi\sqrt{y_1^2 + y_2^2}}\diff y. 
\end{split}
\end{equation}

\noindent To convert \eqref{non-local_proliferation} to polar coordinates we substitute
\begin{equation}\label{eq:change_of_var_R2}
y_1 = r \cos{\alpha}, \qquad \qquad
y_2 = r \sin{\alpha},
\end{equation}
where $r>0$ and $0 \leq \alpha \leq 2\pi$. The Jacobian determinant of the change of variables in \eqref{eq:change_of_var_R2} is equal to $r$. Using
$$
r^2=y_1^2+y_2^2, \qquad \qquad 2\,R\,y_2=2\,R\,r\sin\alpha
$$
to \eqref{splot1_2D}, substituting $u = r^2+R^2-2\,R\,r\sin\alpha$ so that $\sin\alpha = \frac{r^2+R^2-u}{2\,R\,r}$ and using the fact that \eqref{kernel2D} is an indicator function we obtain 
\begin{align*}
& \int_0^\infty \int_0^{2\pi}  K\left((r^2+R^2-2\,R\,r\,\sin\alpha)^{1/2}\right) \, \frac{1}{2\pi\,r}r \, p(r,t) \diff \alpha \diff r\\
& \quad =\frac{2}{2\pi}\int_0^\infty \int_{(R-r)^2}^{(R+r)^2} \frac{K\left(u^{1/2}\right) p(r,t)}{\sqrt{4\,R^2\,r^2-(R^2+r^2-u)^2}} \diff u \diff r \\
& \quad = \frac{1}{\pi^2 \, \sigma^2} \int_{|R-r|\leq\sigma}\int_{(R-r)^2}^{\min\{\sigma^2,\,(R+r)^2\}} \frac{p(r,t)}{\sqrt{4\,R^2\,r^2-(R^2+r^2-u)^2}} \diff u \diff r. 
\end{align*}

\noindent Again, substituting $w=\frac{R^2+r^2-u}{2Rr}$ and integrating with respect to $w$ we obtain 
\begin{align*}
\frac{1}{\pi^2 \, \sigma^2}  \int_{|R-r|\leq\sigma} \left[  \frac{\pi}{2} - \arcsin \max \left( \frac{R^2+r^2-\sigma^2}{2Rr}, -1 \right) \right] p(r,t) \diff r.
\end{align*}
\end{proof}


\section{Simulation results}\label{sect:sim_res}

\noindent This section presents the computational results illustrating the theoretical estimation concerning the orders of convergence of the numerical method used to solve  \eqref{eq:transform_coordinates_INTRODUCTION} and \eqref{non-local_proliferation_polar_2D}. As the Runge-Kutta method requires higher regularity of the right-hand side, to deal with the resulting ODE system we use the explicit Euler scheme. The codes used to perform presented simulations are available in the GitHub repository \cite{szymanska_github}.\\

\noindent While investigating the mentioned orders of convergence we assume each time $\Delta t = \Delta r$. We divide simulation time $T$ into $T/\Delta t$ time steps and the spatial domain $R_0$ into $R_0/\Delta t$ space cells. Then $\mu_t^{N_{\Delta t}}$ denotes the numerical solution obtained for time t, and $N_{\Delta t}$ space cells, where $N_{\Delta t} = R_0/\Delta t$. We define the relative error of the numerical solution $\mu_t^{N_{\Delta t}}$  as the flat norm of the distance between the solution $\mu_t^{N_{\Delta t}}$ and $\mu_t^{N_{\Delta 2t}}$, that is 
\begin{equation}\label{blad_zbieznosci}
\textrm{Err}(\Delta t)= \|\mu_t^{N_{\Delta t}} - \mu_t^{N_{\Delta 2t}}\|_{BL^*,w},
\end{equation}
where the norm is defined by \eqref{defeq:flatnorm_weighted}. Then, the rate of convergence, denoted by $q$, is given by the following formula
\begin{equation}\label{rzad_zbieznosci}
q := \lim_{\Delta t \to 0} q_{\Delta t}, \qquad q_{\Delta t} :=  \frac{\log (\textrm{Err}(2\Delta t)) / \textrm{Err}(\Delta t)) }{\log 2}.
\end{equation}
In the following computations, flat norm and Wasserstein distance were computed using standard algorithms cf. \cite[Sections 3.1, 3.3]{jablonski2013efficient} or \cite[Chapters 4.2-4.4]{our_book_ACPJ}.\\

\noindent {\textbf{Three dimensional case.}} \\
\noindent \tabref{q3D} presents the obtained relative error and the order of convergence of the numerical scheme applied to solve \eqref{eq:transform_coordinates_INTRODUCTION}. The presented simulations were conducted on the spatial domain $R_0$ equal to 2 and smallest $\Delta t = 1.5625 \cdot 10^{-5}$, which gives us $1.28\cdot 10^{5}$ mass points for the smallest discretisation, whereas the simulation time $T$ was equal to 10. We formulated the initial condition to be the same as the one adopted in our companion paper on modelling cells’ proliferation within a solid tumour, see \cite[Eq. 15]{szymanska2021} for a precise explanation of the launched formula. In short, we assume that the initial condition is given by 
\begin{equation}\label{initial_condition}
p(r,0) = 4 \pi r^2 \Big( 1 - \Big(\frac{r}{\tilde{\sigma_i}}\Big)^{\tilde{q}} \Big)\mathds{1}_{\left[0,\tilde{\sigma_{i}}\right]}(r),
\end{equation}
where $\tilde{\sigma_i} = 0.79$ and $\tilde{q} = 13$ are chosen so to match the experimental data we used to estimate parameters of the model. The original model includes an additional parameter describing the proliferation rate. More precisely, instead of \eqref{eq:defL} we consider now
\begin{equation}\label{eq:defL_bis}
L(R,r) =\frac{3\alpha}{16\,\pi\,\sigma^{3}} \, \frac{\min\{(R+r)^2,\sigma^2\}-\min\{(R-r)^2,\sigma^2\}}{R \, r}
\end{equation}
where both $\alpha$ and $\sigma$ together with $\sigma_i$ from the initial condition were subjects of performed parameter estimation \cite{szymanska2021}. Within the present computations, $\alpha = 0.5$ and $\sigma = 0.04$ are chosen to get solutions sufficiently distant from the initial condition to investigate the orders of convergence rather than to meet the experimental data.
\begin{table}[ht]
\begin{tabular}{r|c|c}
\hline
$\Delta t = \Delta r$ & $\textrm{Err}\,(\Delta t)$ &$q_{\Delta t}$
\\\hline \hline
1.5625$\cdot 10^{-5}$ & 3.86073194$\cdot 10^{-5}$ & \bf{--} 
\\\hline
3.125$\cdot 10^{-5}$ & 7.79035932$\cdot 10^{-5}$ & 1.0128154842837573
\\\hline
6.25$\cdot 10^{-5}$ & 1.574836435$\cdot 10^{-4}$ & 1.0154402184761508
\\\hline
1.25$\cdot 10^{-4}$ & 3.259716168$\cdot 10^{-4}$ & 1.0495443548927024
\\\hline
2.5$\cdot 10^{-4}$ & 6.788816649$\cdot 10^{-4}$ & 1.0584137711334787 
\\\hline
5.0$\cdot 10^{-4}$ & 1.534610347$\cdot 10^{-3}$ & 1.1766403603675424
\\\hline
1.0$\cdot 10^{-3}$ & 3.775119532$\cdot 10^{-3}$ & 1.2986499380494803 
\\\hline
\end{tabular}
\caption{Error computed in flat metric $\textrm{Err}\,(\Delta t)$ \eqref{blad_zbieznosci} together with corresponding order of convergence \eqref{rzad_zbieznosci} for the model given by  \eqref{eq:transform_coordinates_INTRODUCTION}.}\label{q3D}
\end{table}

\phantom{...}\\
\noindent {\textbf{Two dimensional case.}}\\
\noindent Computation of the flat metric with respect to H\"older metric is slightly difficult. Therefore, we introduce the following distance on $\mathcal{M}(\R^+)$:
\begin{equation}\label{metryka_rho}
\rho\big(\mu_1,\mu_2\big)=\min \Big\{ M_{\mu_{1}}, M_{\mu_{2}}\Big\}W_1\big(\tilde{\mu}_1,\tilde{\mu}_2\big) + \Big|M_{\mu_{1}} - M_{\mu_{2}}\Big|,
\end{equation}
where $M_{\mu_{i}}=\int_{\R^+} \! \,\mathrm{d}\mu_{i} \neq 0$, $\tilde{\mu}_i = \mu_i/M_{\mu_i}$ and $W_1$ is the usual Wasserstein distance with respect to $\frac{1}{2}$-H\"older metric, i.e.
$$
W_1\big(\tilde{\mu}_1,\tilde{\mu}_2\big) = \sup\left\{ \int_{\R^+} \psi(r) \diff(\mu_1 - \mu_2)(r) \quad \Big| \quad \psi: \R^+ \to \R, |\psi|_{1/2} \leq 1\right\} 
$$
It is well-known \cite{MR3986559,carrillo2014} that for measures defined on a bounded intervals of $\R^+$ metric $\rho$ is equivalent with the flat norm. In the following computations we use $\rho$ because there is a simple linear algorithm to compute $W_1$ cf. \cite[Sections 3.1, 3.3]{jablonski2013efficient} or \cite[Chapters 4.2-4.4]{our_book_ACPJ}.\\

\noindent We define the relative error as the distance between weighted solutions $\mu_t^{N_{\Delta t}}$ and $\mu_t^{N_{\Delta 2t}}$:
\begin{equation}\label{blad_zbieznosci_2D}
\textrm{Err}(\Delta t)= 
\rho\left( \frac{\mu_t^{N_{\Delta t}}}{\sqrt{r}}, \frac{\mu_t^{N_{\Delta 2t}}}{\sqrt{r}}\right).
\end{equation}
Here, $\frac{\mu_t^{N_{\Delta t}}}{\sqrt{r}}$ is a weighted measure as in \eqref{eq:def_weighted_measure} defined with the formula
$$
\frac{\mu_t^{N_{\Delta t}}}{\sqrt{r}}(A) = \int_{A} \frac{1}{\sqrt{r}} \diff \mu_t^{N_{\Delta t}}(r)
$$
and similarly for $\frac{\mu_t^{N_{\Delta 2t}}}{\sqrt{r}}$. Then, the order of convergence is computed as in \eqref{rzad_zbieznosci}.\\

\noindent Finally, the initial condition for the 2D case is similar to \eqref{initial_condition} with the only adjustment of considered dimension i.e. term $2\pi r$ instead of $4 \pi r^2$. Analogously to 3D case now instead of \eqref{eq:defL_2d} we consider
\begin{equation}\label{eq:defL_2d_bis}
L(R,r) =  \frac{\alpha}{\pi^2 \sigma^2} \, \left[\frac{\pi}{2} - \arcsin \max \left(\frac{R^2+r^2-\sigma^2}{2Rr}, -1\right)  \right] \, \mathds{1}_{|R-r|\leq \sigma}.
\end{equation}

\noindent Now, \tabref{q2D} presents the obtained relative error and the order of convergence of the numerical scheme applied to solve \eqref{non-local_proliferation_polar_2D}. Similarly to 3D case, the presented simulations were conducted on the spatial domain $R_0$ equal to 2 and smallest $\Delta t = 1.5625 \cdot 10^{-5}$, which gives us $1.28\cdot 10^{5}$ mass points for the smallest discretisation, whereas the simulation time $T$ was equal to 10. 
\begin{table}[ht]
\begin{tabular}{r|c|c}
\hline
$\Delta t = \Delta r$ & $\textrm{Err}\,(\Delta t)$ &$q_{\Delta t}$
\\\hline \hline
1.5625$\cdot 10^{-5}$ & 7.458465570038538 $\cdot 10^{3}$ & \bf{--} 
\\\hline
3.125$\cdot 10^{-5}$ & 1.0997423574488038$\cdot 10^{-2}$ & 0.5602148149156739
\\\hline
6.25$\cdot 10^{-5}$ & 1.643673194250337$\cdot 10^{-2}$ &  0.5797579061447363
\\\hline
1.25$\cdot 10^{-4}$ & 2.510571793647852$\cdot 10^{-2}$ &  0.6110925001107623
\\\hline
2.5$\cdot 10^{-4}$ & 3.908330914430527$\cdot 10^{-2}$ & 0.6385366423383074
\\\hline
5.0$\cdot 10^{-4}$ & 6.313477587243083$\cdot 10^{-2}$ & 0.691882264852459
\\\hline
1.0$\cdot 10^{-3}$ & 1.0560403857421896$\cdot 10^{-1}$ & 0.7421582142026246
\\\hline
\end{tabular}
\caption{
Error computed in metric $\textrm{Err}\,(\Delta t)$ \eqref{blad_zbieznosci_2D} together with corresponding order of convergence \eqref{rzad_zbieznosci} for the model given by \eqref{non-local_proliferation_polar_2D}.}\label{q2D}
\end{table}


\section*{Acknowledgments} B.~Miasojedow, J.~Skrzeczkowski, and Z.~Szyma\'nska  acknowledge the support from the National Science Centre, Poland -- grant No. 2017/26/M/ST1/00783. 
The calculations were made with the support of the Interdisciplinary Centre for Mathematical and Computational Modelling of the University of Warsaw under the computational grant no. G79-28. \\
\indent All authors would like to express their gratitude to Michał Dzikowski from the Interdisciplinary Centre for Mathematical and Computational Modelling of the University of Warsaw for his valuable help in performing scientific computing. 
\bibliographystyle{abbrv}
\bibliography{bibliography}

\begin{thebibliography}{10}

\bibitem{MR4045015}
A.~S. Ackleh, N.~Saintier, and J.~Skrzeczkowski.
\newblock Sensitivity equations for measure-valued solutions to transport
  equations.
\newblock {\em Math. Biosci. Eng.}, 17(1):514--537, 2020.

\bibitem{MR3138105}
{\AA}.~Br{\"a}nnstr{\"o}m, L.~Carlsson, and D.~Simpson.
\newblock On the convergence of the escalator boxcar train.
\newblock {\em SIAM J. Numer. Anal.}, 51(6):3213--3231, 2013.

\bibitem{byrne1995}
H.~Byrne and M.~Chaplain.
\newblock Growth of non-necrotic tumours in the presence and absence of
  inhibitors.
\newblock {\em Math. Biosci.}, 130:151--181, 1995.

\bibitem{byrne1996}
H.~Byrne and M.~Chaplain.
\newblock Growth of necrotic tumours in the presence and absence of inhibitors.
\newblock {\em Math. Biosci.}, 135:187--216, 1996.

\bibitem{byrne1998}
H.~Byrne and M.~Chaplain.
\newblock Necrosis and apoptosis: distinct cell loss mechanisms in a
  mathematical model of avascular tumour growth.
\newblock {\em Comput Math Methods Med.}, 1:223--235, 1998.

\bibitem{carrillo2012}
J.~A. Carrillo, R.~M. Colombo, P.~Gwiazda, and A.~Ulikowska.
\newblock Structured populations, cell growth and measure valued balance laws.
\newblock {\em J. Differential Equations}, 252(4):3245--3277, 2012.

\bibitem{MR3870087}
J.~A. Carrillo, S.~Fagioli, F.~Santambrogio, and M.~Schmidtchen.
\newblock Splitting schemes and segregation in reaction cross-diffusion
  systems.
\newblock {\em SIAM J. Math. Anal.}, 50(5):5695--5718, 2018.

\bibitem{MR3986559}
J.~A. Carrillo, P.~Gwiazda, K.~Kropielnicka, and A.~K. Marciniak-Czochra.
\newblock The escalator boxcar train method for a system of age-structured
  equations in the space of measures.
\newblock {\em SIAM J. Numer. Anal.}, 57(4):1842--1874, 2019.

\bibitem{carrillo2014}
J.~A. Carrillo, P.~Gwiazda, and A.~Ulikowska.
\newblock Splitting-particle methods for structured population models:
  convergence and applications.
\newblock {\em Math. Models Methods Appl. Sci.}, 24(11):2171--2197, 2014.

\bibitem{chen2005}
X.~Chen, S.~Cui, and A.~Friedman.
\newblock A hyperbolic free boundary problem modeling tumor growth: asymptotic
  behavior.
\newblock {\em Trans. Amer. Math. Soc.}, 357(12):4771--4804, 2005.

\bibitem{MR2888301}
A.~Chertock, J.-G. Liu, and T.~Pendleton.
\newblock Convergence of a particle method and global weak solutions of a
  family of evolutionary {PDE}s.
\newblock {\em SIAM J. Numer. Anal.}, 50(1):1--21, 2012.

\bibitem{MR2050900}
R.~M. Colombo and A.~Corli.
\newblock A semilinear structure on semigroups in a metric space.
\newblock {\em Semigroup Forum}, 68(3):419--444, 2004.

\bibitem{MR731212}
G.-H. Cottet and P.-A. Raviart.
\newblock Particle methods for the one-dimensional {V}lasov-{P}oisson
  equations.
\newblock {\em SIAM J. Numer. Anal.}, 21(1):52--76, 1984.

\bibitem{cui2003}
S.~Cui and A.~Friedman.
\newblock A hyperbolic free boundary problem modeling tumor growth.
\newblock {\em Interfaces Free. Boundaries.}, 5(2):159--181, 2003.

\bibitem{deRoos1988}
A.~M. de~Roos.
\newblock Numerical methods for structured population models: the escalator
  boxcar train.
\newblock {\em Numer. Methods Partial Differential Equations}, 4(3):173--195,
  1988.

\bibitem{de1997gentle}
A.~M. de~Roos.
\newblock A gentle introduction to physiologically structured population
  models.
\newblock In {\em Structured-population models in marine, terrestrial, and
  freshwater systems}, pages 119--204. Springer, 1997.

\bibitem{de2001physiologically}
A.~M. De~Roos and L.~Persson.
\newblock Physiologically structured models--from versatile technique to
  ecological theory.
\newblock {\em Oikos}, 94(1):51--71, 2001.

\bibitem{MR3529992}
Y.~Duan and J.-G. Liu.
\newblock Error estimate of the particle method for the {$b$}-equation.
\newblock {\em Methods Appl. Anal.}, 23(2):119--154, 2016.

\bibitem{our_book_ACPJ}
C.~D\"ull, P.~Gwiazda, A.~Marciniak-Czochra, and J.~Skrzeczkowski.
\newblock {\em Spaces of Measures and their Applications to Structured
  Population Models}.
\newblock Cambridge Monographs on Applied and Computational Mathematics.
  Cambridge University Press, 2021.

\bibitem{MR2597943}
L.~C. Evans.
\newblock {\em Partial differential equations}, volume~19 of {\em Graduate
  Studies in Mathematics}.
\newblock American Mathematical Society, Providence, RI, second edition, 2010.

\bibitem{MR3342408}
J.~H.~M. Evers, S.~C. Hille, and A.~Muntean.
\newblock Mild solutions to a measure-valued mass evolution problem with flux
  boundary conditions.
\newblock {\em J. Differential Equations}, 259(3):1068--1097, 2015.

\bibitem{MR3507552}
J.~H.~M. Evers, S.~C. Hille, and A.~Muntean.
\newblock Measure-valued mass evolution problems with flux boundary conditions
  and solution-dependent velocities.
\newblock {\em SIAM J. Math. Anal.}, 48(3):1929--1953, 2016.

\bibitem{FalsterE2719}
D.~S. Falster, {\r A}.~Br{\"a}nnstr{\"o}m, M.~Westoby, and U.~Dieckmann.
\newblock Multitrait successional forest dynamics enable diverse competitive
  coexistence.
\newblock {\em Proc. Natl. Acad. Sci. U.S.A.}, 114(13):E2719--E2728, 2017.

\bibitem{Falster2016}
D.~S. Falster, R.~G. FitzJohn, {\r A}.~Br{\"a}nnstr{\"o}m, U.~Dieckmann, and
  M.~Westoby.
\newblock Plant: a package for modelling forest trait ecology and evolution.
\newblock {\em Methods Ecol. Evol.}, 7(2):136--146, 2016.

\bibitem{Folland.1984}
G.~B. Folland.
\newblock {\em Real analysis}.
\newblock Pure and Applied Mathematics (New York). John Wiley \& Sons, Inc.,
  New York, 1984.

\bibitem{friedman2006b}
A.~Friedman and B.~Hu.
\newblock Asymptotic stability for a free boundary problem arising in a tumor
  model.
\newblock {\em J Differ Equ.}, 227(2):598--639, 2006.

\bibitem{friedman2006a}
A.~Friedman and B.~Hu.
\newblock Bifurcation from stability to instability for a free boundary problem
  arising in a tumor model.
\newblock {\em Arch Ration Mech Anal.}, 180(2):293--330, 2006.

\bibitem{MR987390}
K.~Ganguly and H.~D. Victory, Jr.
\newblock On the convergence of particle methods for multidimensional
  {V}lasov-{P}oisson systems.
\newblock {\em SIAM J. Numer. Anal.}, 26(2):249--288, 1989.

\bibitem{MR3632260}
Y.~Gao and J.-G. Liu.
\newblock Global convergence of a sticky particle method for the modified
  {C}amassa-{H}olm equation.
\newblock {\em SIAM J. Math. Anal.}, 49(2):1267--1294, 2017.

\bibitem{MR1040146}
J.~Goodman, T.~Y. Hou, and J.~Lowengrub.
\newblock Convergence of the point vortex method for the {$2$}-{D} {E}uler
  equations.
\newblock {\em Comm. Pure Appl. Math.}, 43(3):415--430, 1990.

\bibitem{MR4027078}
P.~Gwiazda, S.~C. Hille, K.~{\L}yczek, and A.~{\'{S}}wierczewska-Gwiazda.
\newblock Differentiability in perturbation parameter of measure solutions to
  perturbed transport equation.
\newblock {\em Kinet. Relat. Models}, 12(5):1093--1108, 2019.

\bibitem{MR3267354}
P.~Gwiazda, J.~Jab{\l}o\'{n}ski, A.~Marciniak-Czochra, and A.~Ulikowska.
\newblock Analysis of particle methods for structured population models with
  nonlocal boundary term in the framework of bounded {L}ipschitz distance.
\newblock {\em Numer. Methods Partial Differential Equations},
  30(6):1797--1820, 2014.

\bibitem{MR3461738}
P.~Gwiazda, K.~Kropielnicka, and A.~Marciniak-Czochra.
\newblock The escalator boxcar train method for a system of age-structured
  equations.
\newblock {\em Netw. Heterog. Media}, 11(1):123--143, 2016.

\bibitem{MR2644146}
P.~Gwiazda, T.~Lorenz, and A.~Marciniak-Czochra.
\newblock A nonlinear structured population model: {L}ipschitz continuity of
  measure-valued solutions with respect to model ingredients.
\newblock {\em J. Differential Equations}, 248(11):2703--2735, 2010.

\bibitem{MR2746205}
P.~Gwiazda and A.~Marciniak-Czochra.
\newblock Structured population equations in metric spaces.
\newblock {\em J. Hyperbolic Differ. Equ.}, 7(4):733--773, 2010.

\bibitem{szymanska_github}
P.~Gwiazda, B.~Miasojedow, J.~Skrzeczkowski, and Z.~Szyma\'nska.
\newblock Non-local-proliferation-model.
\newblock
  \url{https://github.com/Zuzanna-Szymanska/Non-local-proliferation-model},
  2021.

\bibitem{MR3591128}
P.~Gwiazda, P.~Orli\'{n}ski, and A.~Ulikowska.
\newblock Finite range method of approximation for balance laws in measure
  spaces.
\newblock {\em Kinet. Relat. Models}, 10(3):669--688, 2017.

\bibitem{jablonski2013efficient}
J.~Jablonski and A.~Marciniak-Czochra.
\newblock Efficient algorithms computing distances between radon measures on r.
\newblock {\em arXiv preprint arXiv:1304.3501}, 2013.

\bibitem{MR802214}
P.-A. Raviart.
\newblock An analysis of particle methods.
\newblock In {\em Numerical methods in fluid dynamics ({C}omo, 1983)}, volume
  1127 of {\em Lecture Notes in Math.}, pages 243--324. Springer, Berlin, 1985.

\bibitem{MR4066016}
J.~Skrzeczkowski.
\newblock Measure solutions to perturbed structured population
  models---differentiability with respect to perturbation parameter.
\newblock {\em J. Differential Equations}, 268(8):4119--4182, 2020.

\bibitem{szymanska2021}
Z.~Szyma\'nska, B.~Miasojedow, J.~Skrzeczkowski, and P.Gwiazda.
\newblock Bayesian inference of a non-local proliferation model.
\newblock {\em arXiv preprint arXiv:2106.05955}, pages 1--29, 2021.

\bibitem{MR2997595}
A.~Ulikowska.
\newblock An age-structured two-sex model in the space of {R}adon measures:
  well posedness.
\newblock {\em Kinet. Relat. Models}, 5(4):873--900, 2012.

\bibitem{MR2647756}
M.~Westdickenberg and J.~Wilkening.
\newblock Variational particle schemes for the porous medium equation and for
  the system of isentropic {E}uler equations.
\newblock {\em M2AN Math. Model. Numer. Anal.}, 44(1):133--166, 2010.

\bibitem{zhang2017performance}
L.~Zhang, U.~Dieckmann, and {\AA}.~Br{\"a}nnstr{\"o}m.
\newblock On the performance of four methods for the numerical solution of
  ecologically realistic size-structured population models.
\newblock {\em Methods Ecol. Evol.}, 8(8):948--956, 2017.

\end{thebibliography}
\end{document}